\theoremstyle{plain}
\newtheorem{theorem}{Theorem}[section]
\newtheorem{lemma}[theorem]{Lemma}
\newtheorem{definition-theorem}[theorem]{Definition-Theorem}
\newtheorem{proposition}[theorem]{Proposition}
\newtheorem{question}[theorem]{Question}
\newtheorem{corollary}[theorem]{Corollary}
\newtheorem{definition}[theorem]{Definition}
\newtheorem{example}[theorem]{Example}
\newtheorem{remark}[theorem]{Remark}
\newtheorem{remarks}[theorem]{Remarks}
\newtheorem{conjecture}[theorem]{Conjecture}
\newtheorem{Thm}{Theorem}[section]
\newtheorem{Prop}[Thm]{Proposition}
\newtheorem{Lem}[Thm]{Lemma}
\newtheorem{Cor}[Thm]{Corollary}
\theoremstyle{definition} 
\newtheorem{Def}[Thm]{Definition}
\newtheorem{Rmk}[Thm]{Remark}
\newtheorem{Rmks}[Thm]{Remarks}
\newtheorem{Qtn}[Thm]{Question}
\def\ker{{\rm ker \,}}
\def\im{{\rm im \,}}
\def\char{{\rm char \,}}
\def\Lie{{\rm Lie \,}}
\def\co{{\rm co \,}}
\def\ZZ{\mathbb{Z}}
\def\mm{\mathbf{m}}
\def\OO{\mathcal{O}}
\def\gg{\mathfrak{g}}
\def\sl{\mathfrak{sl}}
\def\hh{\mathfrak{h}}
\newcommand \bth[1] { \begin{theorem}\label{t#1} }
\newcommand \ble[1] { \begin{lemma}\label{l#1} }
\newcommand \bpr[1] { \begin{proposition}\label{p#1} }
\newcommand \bqu[1] { \begin{question}\label{q#1} }
\newcommand \bco[1] { \begin{corollary}\label{c#1} }
\newcommand \bde[1] { \begin{definition}\label{d#1}\rm }
\newcommand \bex[1] { \begin{example}\label{e#1}\rm }
\newcommand \bre[1] { \begin{remark}\label{r#1}\rm }
\newcommand \bres[1] { \begin{remarks}\label{r#1}\rm }
\newcommand \bcj[1] { \begin{conjecture}\label{j#1}\rm }
\renewcommand {\eth} { \end{theorem} }
\newcommand {\ele} { \end{lemma} }
\newcommand {\epr} { \end{proposition} }
\newcommand {\equ} {\end{question} }
\newcommand {\eco} { \end{corollary} }
\newcommand {\ede} { \end{definition} }
\newcommand {\eex} { \end{example} }
\newcommand {\ere} { \end{remark} }
\newcommand {\eres} { \end{remarks} }
\newcommand {\ecj} { \end{conjecture} }
\newcommand {\enota} { \end{notation} }
\def \OO {{\mathcal{O}}}
\def \id { {\mathrm{id}} }
\def \Lie { {\mathrm{Lie \,}} }
\def \sl {\mathfrak{sl}}
\def \mm  {\mathfrak{m}}
\def \sl {\mathfrak{sl}}
\DeclareMathOperator \maxspec { {\mathrm{Maxspec}}}
\DeclareMathOperator \Ext { {\mathrm{Ext}} }
\DeclareMathOperator \GKdim {{\mathrm{GK \, dim}}}
\begin{document}


\title[Duals of PI Hopf algebras]
{The finite dual of commutative-by-finite Hopf algebras}
\author[K. A. Brown]{K. A. Brown}
\thanks{The research of K. Brown was partially supported by Leverhulme Emeritus Fellowship EM-2017-081.}
\address{School of Mathematics and Statistics \\
University of Glasgow \\
Glasgow G12 8QQ, Scotland
}
\email{Ken.Brown@glasgow.ac.uk}
\author[M. Couto]{M. Couto}
\thanks{The PhD research of M. Couto was supported by a grant of the Portuguese Foundation for Science and Technology, SFRII/BD/102119/2014.}
\address{
}
\email{miguel.lourenco.couto@gmail.com}
\author[A. Jahn]{A. Jahn}
\thanks{The PhD research of A. Jahn was supported by a scholarship from the Carnegie Trust for the Universities of Scotland.}
\address{
}
\email{}
\date{}
\keywords{Hopf algebra, finite dual, polynomial identity}
\subjclass[2010]{Primary 16T05; Secondary 16T20, 16Rxx, 16P40}

\begin{abstract}
The finite dual $H^{\circ}$ of an affine commutative-by-finite Hopf algebra $H$ is studied. Such a Hopf algebra $H$ is an extension of an affine commutative Hopf algebra $A$ by a finite dimensional Hopf algebra $\overline{H}$. The main theorem gives natural conditions under which $H^{\circ}$ decomposes as a crossed or smash product of $\overline{H}^{\ast}$ by the finite dual $A^{\circ}$ of $A$. This decomposition is then further analysed using the Cartier-Gabriel-Kostant theorem to obtain component Hopf subalgebras of $H^{\circ}$ mapping onto the classical components of $A^{\circ}$. The detailed consequences for a number of families of examples are then studied.  
\end{abstract}
\maketitle

\section{Introduction}
\subsection{}\label{subsect1.1} This is the second in a series of papers about affine commutative-by-finite Hopf algebras and their duals, the first in the series being \cite{BrCou}. In that paper we defined a Hopf algebra $H$ over the algebraically closed field $k$ to be \emph{affine commutative-by-finite} if $H$ is a finitely generated module over a normal commutative finitely generated Hopf subalgebra $A$. (To say that $A$  is \emph{normal} in $H$ means that it is closed under the adjoint actions of $H$ on $A$, recalled at the start of \ref{PIdefn}.) Let $A^+$ denote the augmentation ideal of $A$. Normality of $A$ ensures that $A^+H$ is a Hopf ideal of $H$, so that $\overline{H} := H/A^+ H$ is a finite dimensional quotient Hopf algebra of $H$, and it is natural to view $H$ as an extension of $A$ by $\overline{H}$. 

Our aim in this paper is to obtain, for an affine commutative-by-finite Hopf algebra $H$, a description of the finite dual $H^{\circ}$ of $H$  which generalises the classical description valid when $H = A$ given by the Cartier-Gabriel-Kostant structure theorem \cite[Corollary 5.6.4, Theorem 5.6.5]{Mont}. Recall that in that case, assuming that $k$ has characteristic 0, $A^{\circ}$ is a smash product $U(\mathfrak{g}) \sharp kG$, where $A$ is the algebra of polynomial functions $\mathcal{O}(G)$ of the affine algebraic group $G$, $U(\mathfrak{g})$ is the enveloping algebra of its Lie algebra $\mathfrak{g}$ and $kG$ is the group algebra of $G$. The description we obtain combines (generalisations of) these ingredients with the dual $\overline{H}^{\ast}$ of $\overline{H}$. It comes closest to replicating the classical cocommutative picture when $H$ satisfies two additional hypotheses. Currently we know of no examples where these hypotheses fail to hold. We briefly explain the additonal hypotheses in $\S$\ref{subsect1.2}, then state the main theorem in $\S$\ref{subsect1.3}. 

\medskip     

\subsection{Coideal splitting and orbital semisimplicity}\label{subsect1.2} Let $H$ and its Hopf subalgebra $A$ satisfy the hypotheses of $\S$\ref{subsect1.1}, and assume that $A$ is semiprime or central, or that $H$ is pointed. It is known, as we'll recall in Theorem \ref{properties}, that $A$ is a direct summand of $H$ as right $A$-modules, say 
\begin{equation}\label{split} H \; =  \; A \oplus X.
\end{equation}
Our first additional hypothesis is:

$$ \textbf{(CoSplit)}\quad  X \textit{ in (\ref{split}) can be chosen to be a coideal of } H. $$

To explain the second hypothesis, note first that the left adjoint action of $H$ on $A$ factors through $\overline{H}$, so that for each maximal ideal $\mathfrak{m}$ of $A$ we can define    $\mathfrak{m}^{(\overline{H})}$ to be the unique largest $\overline{H}$-invariant ideal of $A$ contained in $\mathfrak{m}$. By work of Skryabin \cite{Sk10}, which we recall in $\S$\ref{orbitsec}, $A/\mathfrak{m}^{(\overline{H})}$ is a finite dimensional algebra. The second hypothesis is:

$$ \textbf{(OrbSemi)} \quad \textit{For every }\mathfrak{m} \in\mathrm{Maxspec}A, \; A/\mathfrak{m}^{(\overline{H})} \, \textit{is semisimple.}$$ 

\medskip

\subsection{Main theorem}\label{subsect1.3} Here is the main result stated here in the simplest case, namely when $k$ has characteristic 0; the full result is Theorem \ref{crux}. The symbol $R\#T$ is used to denote the smash product of the $k$-algebra $R$ by the Hopf algebra $T$, see \cite[Definition 4.1.3]{Mont}.

\begin{theorem}\label{main} Let $k$ have characteristic 0 and let $H$ be an affine commutative-by-finite Hopf $k$-algebra, finite over the normal commutative Hopf subalgebra $A$. (So $A = \mathcal{O}(G_H)$ for an affine algebraic $k$-group $G_H$ with Lie algebra $\mathfrak{g}$.)  Assume that $A \subseteq H$ satisfies $\mathrm{\mathbf{(CoSplit)}}$ and $\mathrm{\mathbf{(OrbSemi)}}$. Denote the finite dual of $H$ by $H^{\circ}$ and recall that $\overline{H}$ denotes the finite dimensional Hopf algebra $H/A^+ H$.

\begin{enumerate}
\item As a vector space, $H^\circ$ decomposes as $U(\mathfrak{g}) \otimes \overline{H}^{\ast} \otimes kG_H$; as an algebra it decomposes as a smash product,

\begin{equation}\label{finalHdual}
H^\circ \;\cong \; \overline{H}^{\ast} \#\, A^\circ \;\cong \; (\overline{H}^{\ast} \# \,U(\gg))\# kG_H.
\end{equation}

\item $H^\circ$ contains three distinguished Hopf subalgebras, namely $\overline{H}^{\ast}$, a differential smash product
\begin{equation}\label{finalWdecomp}
W(H^\circ) \; \cong \;\overline{H}^{\ast} \#\, U(\gg),
\end{equation}
and a skew group algebra
\begin{equation}\label{finalkGdecomp}
\widehat{kG_H}\; \cong \; \overline{H}^{\ast} \#\, kG_H.
\end{equation}
\item Suppose in addition that $\overline{H}$ is semisimple. Then the action of $U(\gg)$ on $\overline{H}^{\ast}$ is inner, so that
\begin{equation}\label{coup} H^{\circ} \; \cong \; (\overline{H}^{\ast} \otimes U(\gg)) \# kG_H,
\end{equation}
a skew group algebra with coefficient ring the Hopf subalgebra $W(H^{\circ})$.
\end{enumerate}
\end{theorem}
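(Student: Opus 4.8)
The plan is to derive (\ref{coup}) from parts (1) and (2) by ``untwisting'' the differential smash product $W(H^\circ)\cong\overline{H}^{\ast}\#U(\gg)$ of (\ref{finalWdecomp}). The key claim is that the $U(\gg)$-action on $\overline{H}^{\ast}$ is inner, i.e.\ there is an algebra homomorphism $\widehat{\delta}\colon U(\gg)\to\overline{H}^{\ast}$ (necessarily convolution-invertible, with inverse $\widehat{\delta}\circ S$) whose restriction $\delta\colon\gg\to\overline{H}^{\ast}$ is a Lie algebra homomorphism with $x\cdot a=\delta(x)a-a\delta(x)$ for all $x\in\gg$, $a\in\overline{H}^{\ast}$. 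Granting this, for $x\in\gg$ put $\widetilde{x}:=1\#x-\delta(x)\#1\in W(H^\circ)$; one checks directly that each $\widetilde{x}$ centralises the subalgebra $\overline{H}^{\ast}\#1$ and that $[\widetilde{x},\widetilde{y}]=\widetilde{[x,y]}$, so $\gg\cong\widetilde{\gg}$ generates inside $W(H^\circ)$ a copy $\widetilde{U(\gg)}$ of $U(\gg)$ commuting with $\overline{H}^{\ast}$. Since $\widetilde{x}=x\#1$ modulo lower filtration degree, the ordered monomials in a basis of $\widetilde{\gg}$ still form a left $\overline{H}^{\ast}$-basis of $W(H^\circ)$, which gives an algebra isomorphism $W(H^\circ)\cong\overline{H}^{\ast}\otimes U(\gg)$. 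Parts (1)--(2) present $H^\circ$ as $W(H^\circ)\#kG_H$ with $G_H$ acting on $W(H^\circ)$ by algebra automorphisms; substituting the tensor decomposition yields (\ref{coup}), and $W(H^\circ)$ is visibly its coefficient ring, a Hopf subalgebra by part (2).

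Everything therefore rests on producing $\delta$, and this is where semisimplicity of $\overline{H}$ is used. Since $\charr k=0$, the Larson--Radford theorem makes $\overline{H}$ cosemisimple, so $\overline{H}^{\ast}$ is semisimple; as $k$ is algebraically closed, $\overline{H}^{\ast}\cong\prod_{i}M_{n_i}(k)$ is a separable $k$-algebra. A derivation $d$ of $\overline{H}^{\ast}$ kills each central idempotent $e_i$ (from $e_i$ central, $d(e_i)=2e_id(e_i)$, and multiplying by $e_i$ forces $e_id(e_i)=0$), so the $\gg$-action preserves each block $M_{n_i}(k)$. For each $i$ the adjoint representation gives $\Der(M_{n_i}(k))\cong M_{n_i}(k)/kI\cong\sl_{n_i}(k)$, the second isomorphism because $M_{n_i}(k)=\sl_{n_i}(k)\oplus kI$ in characteristic $0$. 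Composing the action homomorphism $\gg\to\Der(\overline{H}^{\ast})=\bigoplus_i\sl_{n_i}(k)$ with the inclusions $\sl_{n_i}(k)\hookrightarrow M_{n_i}(k)$ produces the desired Lie algebra homomorphism $\delta\colon\gg\to\overline{H}^{\ast}$ with $\ad(\delta(x))$ equal to the action of $x$; extending it to $U(\gg)$ by the universal property supplies $\widehat{\delta}$.

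The main obstacle I anticipate is exactly the step ``each derivation is inner $\Rightarrow$ the Lie algebra action is inner'': one needs a \emph{single} Lie algebra homomorphism $\delta$ lifting the map $\gg\to\overline{H}^{\ast}/Z(\overline{H}^{\ast})=\Der(\overline{H}^{\ast})$, whereas a priori the obstruction lies in $H^2(\gg,Z(\overline{H}^{\ast}))$. The reason it vanishes is structural rather than cohomological: the image of $\gg$ in $\Der(\overline{H}^{\ast})$ already lies in the semisimple Lie algebra $\bigoplus_i\sl_{n_i}(k)$, which embeds in $\overline{H}^{\ast}$ as a Lie subalgebra complementary to the centre, giving a canonical splitting. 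The remaining verifications --- that the $\widetilde{x}$ behave as claimed, that the monomial-basis argument goes through, and that the outer $G_H$-action is undisturbed --- are routine once $\delta$ is in place. It is worth noting that in characteristic $p>0$ this argument breaks down, since semisimplicity of $\overline{H}$ need not make $\overline{H}^{\ast}$ separable; this is the one point at which $\charr k = 0$ is essential for part (3).
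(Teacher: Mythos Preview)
Your proof is correct and follows essentially the same strategy as the paper: both arguments observe that semisimplicity of $\overline{H}$ (hence of $\overline{H}^{\ast}$, via Larson--Radford in characteristic~$0$) forces the $\mathfrak{g}$-action on $\overline{H}^{\ast}$ to be inner, then untwist the smash product $\overline{H}^{\ast}\#U(\mathfrak{g})$ by replacing each $x\in\mathfrak{g}$ with $x-\delta(x)$. The paper compresses this step by citing Jacobson's theorem on derivations of separable algebras (via \cite[Example 6.1.6, Proposition 6.2.1]{Mont}), whereas you unpack it explicitly---in particular, you correctly isolate and resolve the point that ``every derivation is inner'' does not automatically yield a \emph{Lie algebra} lift $\delta\colon\mathfrak{g}\to\overline{H}^{\ast}$, showing instead that the canonical splitting $\overline{H}^{\ast}=Z(\overline{H}^{\ast})\oplus\bigoplus_i\mathfrak{sl}_{n_i}(k)$ provides one; this is a detail the paper leaves implicit in its citation.
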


We currently know no counterexample to the possibility that, given any affine commutative-by-finite Hopf algebra $H$, there is a choice of normal commutative Hopf subalgebra $A \subseteq H$ which satisfies $\mathbf{(CoSplit)}$ and $\mathbf{(OrbSemi)}$. This is recorded as Question \ref{hyps}. 

Versions of some of the results in this paper appeared in the PhD theses of the second and third authors, \cite{Cou} and \cite{Jahn}. Since both these theses are publicly available online, some detailed calculations from them are not repeated here; full references are given where this occurs. 

The preprint \cite{LL}, which appeared while the present paper was being written, contains presentations by generators and relations of the finite duals $H^{\circ}$ of the affine prime regular Hopf algebras $H$ of Gel'fand-Kirillov dimension one (all of which are commutative-by-finite).  The results of \cite{LL} and the present paper are largely complementary, the only overlap being with $\S$\ref{GKdim1} of this paper, where the implications of Theorem \ref{crux} for prime regular affine Hopf algebras of GK-dimension one are examined.  

\medskip

\subsection{Layout of the paper}\label{plan} The final $\S$\ref{Examples} of the paper reviews  a number of families of affine commutative-by-finite Hopf algebras. We discuss there the validity of the hypotheses of Theorem \ref{crux} for each family, and the consequences for the structure of the Hopf dual for the algebras in each family. The reader may find it helpful to review some of these examples in tandem with the rest of the paper.

There is a brief review of the properties of affine commutative-by-finite Hopf algebras $H$ in $\S$\ref{OurClass}. $\S$\ref{FinDual} is devoted to the proof of our first key result, Theorem \ref{decompdual}, which obtains sufficient conditions for $H$ to decompose as a crossed or smash product of $\overline{H}^{\ast}$ by $A^{\circ}$. $\S\S$\ref{Wsect} and \ref{kGsec} are concerned respectively with the two Hopf subalgebras $W(H^{\circ})$ and $\widehat{kG_H}$ of $H^{\circ}$ which feature in Theorem \ref{main}(2), and which are respectively generalisations from the commutative case of $U(\mathfrak{g})$ (in characteristic 0) and $kG_H$. It is in analysing the structure of $\widehat{kG_H}$ that the hypothesis $\mathbf{(OrbSemi)}$ comes into play - this is introduced in $\S$\ref{sectionorbss}. The very brief $\S$\ref{final} contains the full statement of Theorem \ref{crux} and its deduction from the results of $\S\S$\ref{FinDual}-\ref{kGsec}.

\subsection{Notation}\label{notation} Throughout this paper $k$ will denote an algebraically closed field, all vector spaces are over $k$ unless stated otherwise, and all unadorned tensor products are over $k$. The \emph{Gelfand-Kirillov dimension} of an algebra $R$ will be denoted by $\GKdim R$; see \cite{KL} for details. For a Hopf algebra $H$ we use the usual notation of $\Delta,\epsilon$ for the coalgebra structure, with $\Delta(h)=\sum h_1\otimes h_2$ for $h \in H$, and we use $S$ to denote its antipode. The augmentation ideal $\ker\epsilon$ of a Hopf algebra $H$ will always be denoted by $H^+$. Further notation will be introduced in $\S$\ref{stand}. 

\bigskip

\section{Commutative-by-finite Hopf algebras}\label{OurClass}

\noindent In this section we briefly recall the most basic properties of affine commutative-by-finite Hopf algebras. At the same time notation will be fixed which will remain in force throughout the paper. Full details of the results stated here can be found in \cite{BrCou}.

\subsection{Definition and basic properties}\label{PIdefn} Recall \cite[\textsection 3.4]{Mont} that a subalgebra $K$ of a Hopf algebra $H$ is \emph{normal} if it is invariant under the left and right adjoint actions of $H$; that is, for all $k\in K$ and $h\in H$, 
$$ (\mathrm{ad}_{\ell} h)(k)= \sum h_1 k S(h_2) \in K \qquad \text{and} \qquad (\mathrm{ad}_{r} h)(k)= \sum S(h_1)kh_2 \in K. $$

The following class of Hopf algebras will be the focus of this paper.

\begin{Def}\label{almost} A Hopf $k$-algebra $H$ is \emph{affine commutative-by-finite} if it is a finite (left or right) module over a commutative normal Hopf subalgebra $A$ which is finitely generated as a $k$-algebra.
\end{Def}

For full proofs and references for the following see \cite[$\S$2]{BrCou}. 

\begin{Thm}\label{properties}
Let $H$ be an affine commutative-by-finite Hopf algebra, finite over the normal commutative Hopf subalgebra $A$ with augmentation ideal $A^+$.
\begin{enumerate}
\item\label{affnoeth} $A$ and $H$ are Noetherian $k$-algebras.
\item $A^+ H = HA^+$ is a Hopf ideal of $H$, and 
$$ \overline{H} \; := \; H/A^+ H $$
is a finite dimensional quotient Hopf algebra of $H$. We denote the Hopf algebra surjection by $\pi: H \longrightarrow \overline{H}$.
\item The left adjoint action of $H$ on $A$ factors through $\overline{H}$, so that $A$ is a left $\overline{H}$-module algebra.
\item The antipode $S$ of $H$ is bijective.
\item $H$ is a finite module over its centre $Z(H)$, which is affine.
\item $ \GKdim H = \GKdim A < \infty. $
\end{enumerate}
\noindent Assume for (7)-(10) that $A$ is semiprime or central, or that $H$ is pointed.
\begin{enumerate}
\item[(7)] $H$ is a finitely generated projective generator as left and right $A$-module.
\item[(8)] $A$ is a left (resp. right) $A$-module direct summand of $H$.
\item[(9)] $A\subseteq H$ is a faithfully flat $\overline{H}$-Galois extension.
\item[(10)] $A$ equals the right and the left $\overline{H}$-coinvariants of the $\overline{H}$-comodule $H$; that is,
$$ H^{\co\pi} \; = \; \,^{\co\pi}H \; = \; A.$$
\end{enumerate}
\end{Thm}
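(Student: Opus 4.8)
The plan is to treat parts (1)--(6) by direct Hopf-algebraic and commutative-algebraic arguments, and to deduce (7)--(10) from the Hopf--Galois machinery once faithful flatness of $H$ over $A$ is established. For (1): $A$ is affine commutative, hence Noetherian by the Hilbert basis theorem, and $H$, being a finite module over the Noetherian ring $A$ on one side, is Noetherian on that side (one-sided ideals are $A$-submodules), and symmetrically on the other. For (2): normality gives $(\ad_\ell h)(A^+)\subseteq A^+$ and $(\ad_r h)(A^+)\subseteq A^+$ for all $h$ (apply $\epsilon$ and use $\epsilon(A^+)=0$), whence $HA^+ = A^+H$ via the identity $ha = \sum(\ad_\ell h_1)(a)h_2$; the coideal and antipode-stability conditions making $A^+H$ a Hopf ideal are inherited from those of $A^+$ in $A$; and $\overline H = H\otimes_A(A/A^+)$ is finite-dimensional because $H$ is $A$-module-finite. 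For (3): commutativity of $A$ forces $(\ad_\ell a)(a') = \epsilon(a)a'$, so $A^+$ acts as zero under $\ad_\ell$, and since $\ad_\ell$ is an algebra action of $H$ on $A$, the ideal $A^+H$ also acts as zero; compatibility of $\ad_\ell$ with multiplication then gives the left $\overline H$-module-algebra structure.

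For (4)--(6): the general identity $ha=\sum(\ad_\ell h_1)(a)h_2$ shows $A^{\overline H}\subseteq Z(H)$; the finiteness of a commutative affine algebra over the invariants of an action of a finite-dimensional Hopf algebra (Skryabin's invariant theory, or classical invariant theory when $\overline H$ is cosemisimple) then shows $A$, and hence $H$, is module-finite over $A^{\overline H}\subseteq Z(H)$, and Artin--Tate applied to $A^{\overline H}\subseteq Z(H)\subseteq H$ gives that $Z(H)$ is affine, proving (5). Part (4), bijectivity of the antipode, is not elementary: I would quote Skryabin's theorem that the antipode of a Noetherian Hopf algebra which is module-finite over a central (Hopf) subalgebra is bijective, which is now available from (5). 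Part (6) is then immediate: GK-dimension is unchanged under a finite module extension, and $\GKdim A=\Kdim A<\infty$ since $A$ is affine commutative.

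For (7)--(10): the crux is to prove $H$ is \emph{faithfully flat} as a left (and right) $A$-module, and this is exactly where the three alternative hypotheses enter, each by a different route — when $A$ is central one argues via the generic fibre of the module-finite $A$-algebra $H$; when $A$ is semiprime one combines generic flatness with reducedness to spread flatness over all of $\Spec A$, using that the fibres of $A\hookrightarrow H$ all have dimension $\dim_k\overline H$, together with a group-scheme reading of the $\overline H$-coaction; when $H$ is pointed one induces up the coradical filtration. Granting faithful flatness, the results of Schneider (and Masuoka) on Hopf--Galois extensions apply: a faithfully flat extension $A\subseteq H$ coming from a normal Hopf subalgebra with finite-dimensional quotient $\overline H$ is automatically $\overline H$-Galois, $A$ is precisely the left and right $\overline H$-coinvariants of $H$ (giving (10)), the normal basis property holds so that $H\cong A\otimes\overline H$ as left $A$-modules (giving (8), since $A=A\otimes k$ is then a direct summand), and $H$ is projective on each side (Skryabin's projectivity-over-comodule-algebras theorem) — hence, being also finitely generated and faithful, a projective generator, which is (7). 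I expect the faithful flatness step to be the main obstacle, since it must be proved case-by-case and each of the three hypotheses needs a genuinely different argument; once it is in hand, (7)--(10) are a black-box application of established Hopf--Galois theory, while (1)--(6) are essentially routine.
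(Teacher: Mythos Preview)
The paper does not itself prove this theorem: it states it as background and refers to \cite[\S2]{BrCou} for full proofs and references. So there is no ``paper's own proof'' to compare against in detail; what can be assessed is whether your outline is internally sound and consistent with what the paper records about the result.

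Most of your sketch is reasonable and matches the expected route (parts (1)--(3) and (6) are routine; (4) is indeed Skryabin's bijectivity result for Noetherian PI Hopf algebras, as the paper confirms in Remark~\ref{propremarks}(3); and identifying faithful flatness of $H$ over $A$ as the crux for (7)--(10), with the three hypotheses handled separately, is exactly right).

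However, your argument for (8) contains a genuine error. You claim that, once faithful flatness and the $\overline H$-Galois property are in hand, ``the normal basis property holds so that $H\cong A\otimes\overline H$ as left $A$-modules'', and deduce the splitting of $A\hookrightarrow H$ from that. But the normal basis property is \emph{not} automatic here: by the Doi--Takeuchi theorem recalled in Remark~\ref{propremarks}(2), it is equivalent to $H$ being a crossed product $A\#_\sigma\overline H$, and Remark~\ref{propremarks}(1) records Radford's example $H=\mathcal{O}(SL_2(k))$ over its even-degree Hopf subalgebra $A$, where $H$ is \emph{not} free over $A$. So your proposed mechanism for (8) fails in general, and the $A$-module splitting $H=A\oplus X$ must be obtained by a different argument (e.g.\ via a trace-type map built from an integral for the finite-dimensional $\overline H$, or from Schneider's structure theory for faithfully flat Hopf--Galois extensions) that does not force freeness.

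A smaller point: in (5) you invoke ``finiteness of a commutative affine algebra over the invariants of an action of a finite-dimensional Hopf algebra'' as if it were standard. This is not an elementary fact for arbitrary finite-dimensional $\overline H$, and you should locate a precise reference (Skryabin's work does cover what is needed here, but the statement you want is not classical invariant theory unless $\overline H$ is cosemisimple).
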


\begin{Rmks}\label{propremarks} Keep the notation and hypotheses of Theorem \ref{properties}.

\noindent$(1)$ (Radford \cite{Rad80}) In general, $H$ is \emph{not} a free $A$-module. For example, let $H=\OO(SL_2(k))$. This commutative Hopf algebra is a finite module over the Hopf subalgebra $A$ generated by the monomials of even degree, but it is not a free $A$-module.
\medskip
\noindent$(2)$ An important setting where $H$ \emph{is} $A$-free is when $H$ decomposes as a crossed product $H = A \#_{\sigma} \overline{H}$. By a celebrated result of Doi and Takeuchi \cite{DT}, this happens if and only if there is a cleaving map $\gamma:\overline{H} \longrightarrow H$ - that is,  $\gamma$ is a convolution invertible right $\overline{H}$-comodule map. Moreover, given that the extension is $\overline{H}$-Galois by Theorem \ref{properties}(4), such a cleaving map exists if and only if $A \subseteq H$ has the \emph{normal basis property}.  For details, see \cite[Propositions 7.2.3, 7.2.7, Theorem 8.2.4]{Mont}. Such a decomposition of $H$ is guaranteed when the coradical of $H$ is contained in $AG(H)$, by \cite[Corollary 4.3]{Schn92}.
\medskip

\noindent$(3)$ Since Theorem \ref{properties}(4) follows from \cite[Corollary 2]{Sk06}, which is valid for all Noetherian Hopf algebras satisfying a PI,  in Definition \ref{almost} it is enough to require that $A$ is normal on one side only; see \cite[Lemma 4.10(1)]{BrCou}.
\medskip

\noindent$(4)$ In seeking to apply Theorem \ref{properties}(7)-(10) one can often reduce to the case where $A$ is semiprime. For recall that $A$ is always semiprime when $k$ has characteristic 0 \cite[11.6]{Water}. Moreover in characteristic $p > 0$ one can usually reach this setting by replacing A by its image under a suitable power of the Frobenius map; we leave the details to the interested reader.

\end{Rmks}
\medskip

\subsection{Further notation}\label{stand}
For brevity we shall henceforth record the set-up of Theorem \ref{properties}, including $\overline{H}$ and the map $\pi$,  as
$$A \subseteq H \textit{is an affine commutative-by-finite Hopf algebra.}$$
The nilrdical $N(A)$ of the commutative affine Hopf $k$-algebra $A$ is a Hopf ideal by \cite[Lemma 5.2(1)]{BrCou}. There is thus an affine algebraic $k$-group $G_H$ such that $A/N(A) \cong \mathcal{O}(G_H)$ \cite[11.7]{Water}. We will often identify $G_H$ with the space $\maxspec(A)$ of maximal ideals of $A$ and with the group of algebra homomorphisms from $A$ to $k$, that is with the group-like elements $G(A^{\circ})$ of the finite dual $A^{\circ}$. For $g \in G_H$, we denote the corresponding members of $\maxspec(A)$ and $G(A^{\circ})$ by $\mm_g$ and $\chi_g$.

\bigskip

\section{The finite dual}\label{FinDual}

\subsection{Commutative recap}\label{commcase}When $A$ is a commutative affine Hopf $k$-algebra the structure of $A^{\circ}$ is well understood - it is cocommutative and decomposes  as a smash product
\begin{equation}\label{comm} A^{\circ} \; \cong \; A' \# kG, 
\end{equation}
by the classical structure theorem for cocommutative pointed Hopf algebras, due to Cartier-Gabriel-Kostant \cite[Theorem 5.6.4(3)]{Mont}. Here, $A'$ is a connected Hopf subalgebra of $A$, and, as discussed in $\S$\ref{stand}, $G = G(A^{\circ})$ is the group of characters of $A$.  Moreover, when $k$ has characteristic 0, $A'$  is the enveloping algebra $U(\mathfrak{g})$ of the Lie algebra $\mathfrak{g}$ of $G$, \cite[Theorem 5.6.5]{Mont}. That is, $\gg=(A^+/{A^+}^2)^{\ast}$, consisting of the primitive elements of $A^{\circ}$, the functionals vanishing at 1 and ${A^+}^2$. Thus, by \cite[Proposition 9.2.5]{Mont}, 
\begin{equation}\label{Lie} A'  = \{f \in A^{\circ} : f((A^+)^n) = 0, \text{ for some } n > 0 \}= U(\mathfrak{g}),
\end{equation}
where the first equality holds in general and the second when $k$ has characteristic 0.
Similarly, as can easily be checked, with no condition on the characteristic of $k$,
\begin{equation}\label{groupalg} kG=\lbrace f\in A^{\circ}: f(\mm_1\cap \ldots\cap \mm_r)=0 , r \geq 1, \mm_i\in\maxspec(A) \rbrace, 
\end{equation}
the functionals vanishing on some finite intersection of maximal ideals of $A$.

\medskip

\subsection{Subspaces and quotient spaces of $H^{\circ}$}\label{subspaces}The following lemma gathers together some basic facts concerning the components $A^{\circ}$ and $\overline{H}^{\ast}$ of $H^{\circ}$. Fix throughout the rest of the paper the Hopf embedding and the Hopf surjection  
$$\iota:A\hookrightarrow H \; \textit{ and } \; \pi:H \longrightarrow \overline{H},$$
and the projection of right $A$-modules along $X$, 
\begin{equation}\label{proj}\Pi:H = A \oplus  X \twoheadrightarrow A : h = a+x \mapsto a,
\end{equation}
afforded by Theorem \ref{properties}(8) when $A$ is semiprime or central, or $H$ is pointed.

\begin{Lem}\label{HbarHopf}
Let $A \subseteq H$ be an affine commutative-by-finite Hopf algebra and assume that $A$ semiprime or central, or $H$ is pointed. 
\begin{enumerate}
\item There is an embedding of Hopf algebras
$$ \pi^{\circ} : \overline{H}^{\ast}\hookrightarrow H^{\circ}: \beta \mapsto \beta \circ \pi. $$
\item $\pi^\circ(\overline{H}^{\ast})$, which we identify with $\overline{H}^{\ast}$, is a normal Hopf subalgebra of $H^{\circ}$. 
\item $H^{\circ}$ is a free (right and left) $\overline{H}^{\ast}$-module.
\item There is a map of right $A^{\circ}$-comodules
$$ \Pi^{\circ}: A^{\circ} \longrightarrow H^{\circ} : \alpha \mapsto \alpha \circ \Pi. $$
\item There is a Hopf algebra map 
$$ \iota^{\circ} : H^{\circ} \longrightarrow A^{\circ} : f \mapsto f \circ \iota, $$
with $\iota^{\circ} \circ \Pi^{\circ} = \id_{A^{\circ}}.$ Thus $\Pi^{\circ}$ is injective and $\iota^{\circ}$ is surjective.
\item Via $\iota^{\circ}$, $H^{\circ}$ is canonically a right and left $A^{\circ}$-comodule algebra, with coinvariants
$$ (H^{\circ})^{co\,\iota^{\circ}}= {^{co\,\iota^{\circ}}(H^{\circ})}=\overline{H}^*. $$
\item The Hopf ideal $(\overline{H}^{\ast})^+ H^{\circ}$ of $H^{\circ}$ is contained in $\ker\iota^{\circ}$, so that 
\begin{equation}\label{hope} (\overline{H}^{\ast})^+ H^{\circ} \oplus \Pi^{\circ}(A^{\circ}) \subseteq \ker\iota^{\circ} \oplus \Pi^{\circ}(A^{\circ}) = H^\circ.
\end{equation}
\end{enumerate}
\end{Lem}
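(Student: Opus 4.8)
The plan is to verify the seven assertions of Lemma \ref{HbarHopf} essentially in the order stated, since each builds on the previous ones, and to rely throughout on the basic duality between the maps $\iota:A\hookrightarrow H$ and $\pi:H\twoheadrightarrow\overline H$ and the fact (Theorem \ref{properties}(8),(9),(10)) that $A\subseteq H$ is a faithfully flat $\overline H$-Galois extension with $H^{\co\pi}={}^{\co\pi}H=A$. For (1), I would recall the standard fact that a Hopf algebra map $f:H_1\to H_2$ dualises to a Hopf algebra map $f^{\circ}:H_2^{\circ}\to H_1^{\circ}$, and that when $f$ is surjective, $f^{\circ}$ is injective (\cite[Lemma 9.2.4 or §9.2]{Mont}); since $\overline H$ is finite dimensional, $\overline H^{\ast}=\overline H^{\circ}$, giving the embedding $\pi^{\circ}:\overline H^{\ast}\hookrightarrow H^{\circ}$. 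For (2), normality should follow by dualising the normality of the Hopf \emph{ideal} $A^{+}H=HA^{+}$: the two-sided coideal structure passes to a two-sided ideal $\overline H^{\ast}$ stable under $\ad_\ell$ and $\ad_r$ of $H^{\circ}$; alternatively one dualises the fact that $A=H^{\co\pi}$ is normal in $H$ to conclude $\overline H^{\ast}=\pi^{\circ}(\overline H^{\circ})$ is normal in $H^{\circ}$. For (3), freeness of $H^{\circ}$ over $\overline H^{\ast}$ is the dual statement to faithful flatness/freeness in the Galois extension $A\subseteq H$; here I would invoke the Nichols–Zoeller freeness theorem applied inside $H^{\circ}$, or directly dualise Theorem \ref{properties}(7),(9).

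Assertions (4) and (5) are the most computational but routine. For (5), $\iota^{\circ}:H^{\circ}\to A^{\circ}$, $f\mapsto f\circ\iota$, is the functorial dual of the Hopf embedding $\iota$, hence a Hopf algebra map; one must check it lands in $A^{\circ}$ (it does, since restricting a functional that kills a cofinite ideal of $H$ to $A$ kills a cofinite ideal of $A$, $A$ being a subalgebra with $H$ finite over it). The identity $\iota^{\circ}\circ\Pi^{\circ}=\id_{A^{\circ}}$ is immediate from $\Pi\circ\iota=\id_A$ (the projection $\Pi$ of \eqref{proj} restricts to the identity on $A$), and injectivity of $\Pi^{\circ}$ and surjectivity of $\iota^{\circ}$ follow formally. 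For (4), that $\Pi^{\circ}$ is a map of right $A^{\circ}$-comodules amounts to checking compatibility of $\Pi$ with the relevant coactions; the key input is that $\Pi$ is a map of right $A$-\emph{modules}, and dualising a right $A$-module map gives a right $A^{\circ}$-comodule map. For (6), the $A^{\circ}$-comodule algebra structure on $H^{\circ}$ is transported along $\iota^{\circ}$ exactly as $H$ is an $\overline H$-comodule algebra along $\pi$; the coinvariant computation $(H^{\circ})^{\co\iota^{\circ}}={}^{\co\iota^{\circ}}(H^{\circ})=\overline H^{\ast}$ is the dual of $H^{\co\pi}={}^{\co\pi}H=A$ from Theorem \ref{properties}(10), and I would prove it by the same faithfully-flat-descent argument, or by a dimension/duality count using (1) and (3).

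Finally, (7): since $\overline H^{\ast}$ sits inside $H^{\circ}$ as the $\iota^{\circ}$-coinvariants by (6), every element of the augmentation ideal $(\overline H^{\ast})^{+}$ is killed by $\iota^{\circ}$ (being coinvariant and in $\ker\epsilon$, it maps into $\ker\epsilon_{A^{\circ}}\cap$ coinvariants $=0$, or more directly $\iota^{\circ}$ restricted to the coinvariants $\overline H^{\ast}$ is the counit); hence $(\overline H^{\ast})^{+}H^{\circ}\subseteq\ker\iota^{\circ}$ because $\ker\iota^{\circ}$ is a two-sided ideal. That $(\overline H^{\ast})^{+}H^{\circ}$ is a Hopf ideal is standard for the augmentation-ideal-times-algebra construction over a normal Hopf subalgebra (part (2)). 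The displayed equation \eqref{hope} then just records $\ker\iota^{\circ}\oplus\Pi^{\circ}(A^{\circ})=H^{\circ}$, which is the splitting dual to \eqref{split}: $\iota^{\circ}$ is surjective with section $\Pi^{\circ}$ by (5), so $H^{\circ}=\ker\iota^{\circ}\oplus\Pi^{\circ}(A^{\circ})$ as vector spaces. I expect the main obstacle to be assertion (6), specifically pinning down that the coinvariants of $H^{\circ}$ under $\iota^{\circ}$ are exactly $\overline H^{\ast}$ and no larger: one inclusion ($\overline H^{\ast}\subseteq$ coinvariants) is easy from $\iota^{\circ}\circ\pi^{\circ}=\epsilon$-type compatibility, but the reverse inclusion needs the faithful flatness of the Galois extension $A\subseteq H$ to be transported to the dual side, which is where the hypotheses ``$A$ semiprime or central, or $H$ pointed'' (guaranteeing Theorem \ref{properties}(7)--(10)) are genuinely used.
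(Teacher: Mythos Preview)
Your outline is broadly on track for (1), (5), and (7), but there are genuine gaps in (3), (4)--(5), and (6).

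\textbf{(3) Freeness.} Nichols--Zoeller requires the ambient Hopf algebra to be finite dimensional, so it does not apply to $H^{\circ}$. Nor is there any general principle by which projectivity or faithful flatness of $H$ over $A$ ``dualises'' to freeness of $H^{\circ}$ over $\overline{H}^{\ast}$. The paper instead invokes Schneider's theorem \cite[Theorem 2.1(2)]{Schn}: any Hopf algebra is free over a \emph{finite-dimensional} normal Hopf subalgebra. This is exactly tailored to the situation and is what you need here.

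\textbf{(4)--(5) Well-definedness of $\Pi^{\circ}$.} You never explain why $\alpha\circ\Pi$ lies in $H^{\circ}$ rather than merely in $H^{\ast}$; the map $\Pi$ is only $k$-linear (and right $A$-linear), not an algebra map, so there is no functorial reason for $\Pi^{\circ}$ to preserve the finite dual. The paper's argument is the key content: if $\alpha(J)=0$ for a cofinite ideal $J\lhd A$, then the right $A$-module splitting $H=A\oplus X$ gives $HJ=J\oplus XJ$, so $\alpha\circ\Pi$ vanishes on the cofinite left ideal $HJ$ and hence lies in $H^{\circ}$. This is precisely where the hypothesis ``$A$ semiprime or central, or $H$ pointed'' enters, via the existence of the splitting $\Pi$ from Theorem \ref{properties}(8) --- not in (6) as you suggest.

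\textbf{(6) Coinvariants.} There is no off-the-shelf duality sending $H^{\co\pi}=A$ to $(H^{\circ})^{\co\iota^{\circ}}=\overline{H}^{\ast}$, and no faithful flatness is needed. The paper's argument is elementary and direct: $f$ is a right $\iota^{\circ}$-coinvariant iff $f(ha)=f(h)\epsilon(a)$ for all $h\in H$, $a\in A$, and this is exactly the characterisation of $\pi^{\circ}(\overline{H}^{\ast})$ obtained in the course of proving (2). So (6) falls out of the computation you would do for normality in (2), not from any descent argument.
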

\begin{proof} (1),(2) Define $T := \{f \in H^{\circ} : f(A^+H) = 0\}$. Since $A^+H$ is a Hopf ideal of $H$, it is clear that $T$ is a Hopf subalgebra of $H^{\circ}$ which is isomorphic as a Hopf algebra to $\overline{H}^*$, via the map $\pi^{\circ}: \overline{H}^{\ast} \longrightarrow H^{\circ}: \beta \mapsto \beta \circ \pi$.

Since $HA^+ = A^+ H$ by Theorem \ref{properties}(2), it is very easy to check that 
\begin{equation}\label{hunk}T=\lbrace f\in H^{\circ}: f(ah)=\epsilon(a)f(h)=f(ha), \forall a\in A, \forall h\in H \rbrace. 
\end{equation} 

To deduce normality of $T$, let $f\in\overline{H}^*$ and $\varphi\in H^{\circ}$. For any $a\in A^+, h\in H$,
\begin{eqnarray*}
(\mathrm{ad}_{r} \varphi)(f) (ah) &=& \left[ \sum S^\circ(\varphi_1)f\varphi_2 \right](ah) = \sum S^\circ(\varphi_1)(a_1h_1) f(a_2h_2) \varphi_2(a_3h_3) \\
&=& \sum \varphi_1(S(h_1)S(a_1)) \epsilon(a_2)f(h_2) \varphi_2(a_3h_3) = \sum \varphi(S(h_1)S(a_1)a_2h_3) f(h_2) \\
&=& \epsilon(a) \sum \varphi(S(h_1)h_3) f(h_2) =0.
\end{eqnarray*}
Similarly, one shows that $(\mathrm{ad}_{\ell}\varphi)(ha) = 0$ for $h \in H$ and $a \in A$, and hence $T$ is normal by (\ref{hunk}).

\medskip

(3) Since $\overline{H}^{\ast}$ is a finite-dimensional and normal Hopf subalgebra of $H^{\circ}$, $H^{\circ}$ is a free $\overline{H}^{\ast}$-module by \cite[Theorem 2.1(2)]{Schn}. 

\medskip

(4),(5) Clearly, the restriction map $\iota^\circ: f \mapsto f \circ \iota$ is a Hopf algebra map whose image lies in $A^{\circ}$.

Let $f \in A^{\circ}$, so $f(J) = 0$ for an ideal $J$ of $A$ of finite codimension. Thanks to Theorem \ref{properties}(7) $H = A \oplus X$ for a right $A$-submodule $X$ of $H$, so the left ideal $HJ$ of $H$ decomposes as a right $A$-module,
\begin{equation}\label{decomp} HJ = J \oplus XJ \subseteq A \oplus X = H.
\end{equation}
In view of (\ref{decomp}) we can define $\widehat{f} \in H^{\ast}$ by $\widehat{f}(X) = 0$ and $\widehat{f}_{\mid A} = f$. Since $\widehat{f}(HJ) = 0$ and $\mathrm{dim}_k (H/HJ)  < \infty$, $\widehat{f} \in  H^{\circ}$ by \cite[Lemma 9.1.1]{Mont}. By construction, $f \circ  \Pi = \widehat{f}$; that is, $\Pi^{\circ}(f) = \widehat{f}$. It is clear that $\iota^{\circ} \circ \Pi^{\circ} = \mathrm{id}_{A^{\circ}}$, so $\Pi^{\circ}$ is injective and $\iota^{\circ}$ surjective. 

It remains to prove that $\Pi^{\circ}$ is a map of right $A^{\circ}$-comodules. The right $A^{\circ}$-comodule structure of $H^{\circ}$ is given by $\rho:=(\id\otimes \iota^{\circ})\Delta$. Let $f\in A^{\circ}$. Then for all $a\in A, h\in H$ we have
\begin{eqnarray*}
\left( (\rho\circ\Pi^\circ)(f)\right)(h\otimes a) &=& \left(\rho(f\circ\Pi)\right)(h\otimes a) = \left[ \sum (f\circ\Pi)_1 \otimes (f\circ\Pi)_2\circ\iota \right](h\otimes a) \\
&=& \sum (f\circ\Pi)_1(h) (f\circ\Pi)_2 (a) = (f\circ\Pi)(ha) = f(\Pi(h)a).
\end{eqnarray*}
On the other hand,
$$ \left((\Pi^\circ\otimes \id)\Delta(f)\right)(h\otimes a) = \left[\sum (\Pi^\circ\circ f_1) \otimes f_2\right](h\otimes a) = \sum f_1(\Pi(h)) f_2(a) = f(\Pi(h)a), $$
so (4) is proved.

\medskip

(6) $H^{\circ}$ is canonically a right and left $A^{\circ}$-comodule algebra, the right comodule map being $\rho:=(\id\otimes\iota^{\circ})\circ\Delta$. The right coinvariants are the maps $f$ such that 
$$\rho(f)=\sum f_1\otimes (f_2\circ \iota)= f\otimes \epsilon_A, $$ 
so that $f(ha)=f(h)\epsilon(a)$ for all $h\in H, a\in A$. By (\ref{hunk}), these are precisely the maps in $\overline{H}^*$. The left case is analogous. 

\medskip

(7) First, $(\overline{H}^{\ast})^+ H^{\circ}$ is a Hopf ideal of $H^{\circ}$ since, by (2),  $\overline{H}^{\ast}$ is a normal Hopf subalgebra of $H^{\circ}$, so \cite[$\S$3.4]{Mont} applies. Let $f\in (\overline{H}^*)^+$. Then $f(A^+H)=0$ by definition of $\overline{H}^{\ast}$, and $\epsilon_{H^{\circ}}(f)=f(1)=0$. Thus  $f(A)=0$, so that $f \in \ker\iota^{\circ}$.  Hence $(\overline{H}^{\ast})^+ H^{\circ} \subseteq \ker\iota^{\circ}$, and the inclusion in (\ref{hope}) follows. For the equality in (\ref{hope}), first observe that the sums are direct by part (5). Let $f \in H^{\circ}$. Then $(\Pi^{\circ} \circ \iota^{\circ})(f) \in \Pi^{\circ}(A^{\circ})$, and 
$$ \iota^{\circ}(f - (\Pi^{\circ} \circ \iota^{\circ})(f)) = 0,$$
so that $H^{\circ} = \ker\iota^{\circ} + \Pi^{\circ}(A^{\circ})$, as required.
\end{proof}

\medskip

\subsection{Decomposition of $H^{\circ}$}

Our first main result is the following. For the definition of a \emph{cleaving map}, see Remark \ref{propremarks}(2). Recall that Theorem \ref{properties}(8) guarantees that under the hypotheses of the theorem below $H = A \oplus X$ for a right $A$-submodule $X$ of $H$, and, as discussed in $\S$\ref{subsect1.1}, $A \subseteq H$ satisfies $\mathbf{(CoSplit)}$ if $X$ can be chosen to be a coideal of $H$.

\begin{Thm}\label{decompdual}
Let $A \subseteq H$ be an affine commutative-by-finite Hopf $k$-algebra with $A$ semiprime or central, or $H$ pointed. 
\begin{enumerate}
\item Suppose that $H$ satisfies $\mathbf{(CoSplit)}$. Then $H^{\circ}$ decomposes as a smash product
\begin{equation}\label{smashed}
H^{\circ}\; \cong \; \overline{H}^{\ast} \# A^{\circ}, 
\end{equation}
the isomorphism (\ref{smashed}) being as left $\overline{H}^*$-modules and right $A^{\circ}$-comodules as well as of algebras. 

\item Suppose that as an algebra $H=A\#_\sigma \overline{H}$ is a crossed product whose cleaving map $\gamma: \overline{H}\to H$ is a coalgebra map. (For example, this is the case if $H$ decomposes as a smash product $H \cong A \# \overline{H}$ with $\overline{H}$ a Hopf subalgebra of $H$.) Then (1) applies and the action of $A^\circ$ on $\overline{H}^*$ is trivial, so that 
\begin{equation}\label{mashed} H^\circ \; \cong \; \overline{H}^{\ast}\otimes A^\circ 
\end{equation} 
as left $\overline{H}^*$-modules, right $A^\circ$-comodules and algebras.


\end{enumerate}
\end{Thm}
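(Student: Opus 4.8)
The plan is to recognise the surjection $\iota^{\circ}\colon H^{\circ}\twoheadrightarrow A^{\circ}$ of Lemma \ref{HbarHopf}(5) as a \emph{cleft} right $A^{\circ}$-comodule algebra extension whose coinvariant subalgebra is $\overline{H}^{\ast}$, and then to apply the crossed-product structure theorem of Doi--Takeuchi \cite{DT} (cf. \cite[\S 7.2]{Mont} and Remark \ref{propremarks}(2)) with $A^{\circ}$ now playing the role of the ``finite'' Hopf quotient. The cleaving map will be the dual $\Pi^{\circ}\colon A^{\circ}\to H^{\circ}$ of the right $A$-module projection $\Pi$ of (\ref{proj}); hypothesis $\mathbf{(CoSplit)}$ is exactly what upgrades $\Pi^{\circ}$ to an algebra homomorphism, and that in turn forces the crossed-product cocycle to be trivial, yielding a smash rather than merely a crossed product.

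\emph{For (1):} when the complement $X$ can be taken to be a coideal, $\Pi$ is a coalgebra map (because $\Delta_H A\subseteq A\otimes A$, $\Delta_H X\subseteq X\otimes H+H\otimes X$ and $\epsilon_H X=0$, so that $(\Pi\otimes\Pi)\Delta_H=\Delta_A\Pi$ and $\epsilon_A\Pi=\epsilon_H$). Dualising, $\Pi^{\circ}$ is a unital algebra homomorphism; by Lemma \ref{HbarHopf}(4),(5) it is moreover a right $A^{\circ}$-comodule map with $\iota^{\circ}\Pi^{\circ}=\mathrm{id}_{A^{\circ}}$. Any algebra map into $H^{\circ}$ is convolution invertible, with inverse $\Pi^{\circ}\circ S$, so $\Pi^{\circ}$ is a cleaving map for the right $A^{\circ}$-comodule algebra $H^{\circ}$, whose $A^{\circ}$-coinvariants are $\overline{H}^{\ast}$ by Lemma \ref{HbarHopf}(6). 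The structure theorem then gives $H^{\circ}\cong\overline{H}^{\ast}\#_{\tau}A^{\circ}$, via $\beta\#\alpha\mapsto\beta\,\Pi^{\circ}(\alpha)$, compatibly with the left $\overline{H}^{\ast}$-module, right $A^{\circ}$-comodule and algebra structures, with $A^{\circ}$ acting on $\overline{H}^{\ast}$ by $\alpha\rightharpoonup\beta=\sum\Pi^{\circ}(\alpha_1)\,\beta\,\Pi^{\circ}(S\alpha_2)$ and cocycle $\tau(\alpha,\alpha')=\sum\Pi^{\circ}(\alpha_1)\Pi^{\circ}(\alpha'_1)(\Pi^{\circ})^{-1}(\alpha_2\alpha'_2)$, all products taken inside $H^{\circ}$. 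Since $\Pi^{\circ}$ is an algebra map the cocycle collapses to $\epsilon(\alpha)\epsilon(\alpha')1$, so $\tau$ is trivial and (\ref{smashed}) holds.

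\emph{For (2)} we must first verify $\mathbf{(CoSplit)}$. Normalising $\gamma$ so that $\gamma(1)=1$, the hypotheses on $\gamma$ give $\pi\gamma=\mathrm{id}_{\overline{H}}$; furthermore a coalgebra map $\gamma$ which is a right $\overline{H}$-comodule map is automatically a \emph{left} $\overline{H}$-comodule map (immediate from $\Delta_H\gamma=(\gamma\otimes\gamma)\Delta_{\overline{H}}$ and $\pi\gamma=\mathrm{id}$), hence a left cleaving map; this yields a right $A$-module isomorphism $\overline{H}\otimes A\cong H$ sending $\bar h\otimes a$ to $\gamma(\bar h)a$. So $H=A\oplus X$ with $X:=\gamma(\overline{H}^{+})A$, and the associated projection satisfies $\Pi(\gamma(\bar h)a)=\epsilon_{\overline{H}}(\bar h)a$. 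From $\Delta_H(\gamma(\bar h)a)=\sum\gamma(\bar h_1)a_1\otimes\gamma(\bar h_2)a_2$ and splitting off the summands $\bar h\otimes 1$ and $1\otimes\bar h$ of $\Delta\bar h$ for $\bar h\in\overline{H}^{+}$, one gets $\Delta_H X\subseteq X\otimes H+H\otimes X$; with $\epsilon_H X=0$ this makes $X$ a coideal, so part (1) applies. Finally, triviality of $\alpha\rightharpoonup\beta$ is equivalent to $\Pi^{\circ}(A^{\circ})$ commuting with $\overline{H}^{\ast}$ inside $H^{\circ}$, i.e. to $\sum\Pi(h_1)\otimes\pi(h_2)=\sum\Pi(h_2)\otimes\pi(h_1)$ in $A\otimes\overline{H}$ for all $h\in H$; evaluating both sides on the spanning elements $h=\gamma(\bar h)a$, where $\Pi(\gamma(\bar h)a)=\epsilon(\bar h)a$ and $\pi(\gamma(\bar h)a)=\epsilon(a)\bar h$, each side equals $a\otimes\bar h$. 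Hence the action is trivial and (\ref{smashed}) becomes (\ref{mashed}).

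The main obstacle is in (2): producing a workable coideal complement $X$ and checking $\mathbf{(CoSplit)}$, together with the bookkeeping that identifies the $A^{\circ}$-action in (\ref{smashed}) with conjugation by $\Pi^{\circ}(A^{\circ})$, so that ``trivial action'' becomes precisely the commutativity statement above. In (1) the only delicate point is confirming that the crossed-product machinery applies to the infinite-dimensional Hopf algebra $A^{\circ}$ — the comodule algebra structure, the coinvariant subalgebra, and the convolution-invertible comodule cleaving map — but these are exactly what Lemma \ref{HbarHopf} supplies, and the Doi--Takeuchi theorem requires no finiteness hypothesis.
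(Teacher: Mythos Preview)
Your proof is correct and follows essentially the same route as the paper's: in (1) you use $\mathbf{(CoSplit)}$ to make $\Pi$ a coalgebra map, hence $\Pi^{\circ}$ an algebra homomorphism and therefore a cleaving map for the $A^{\circ}$-comodule algebra $H^{\circ}$, invoke Doi--Takeuchi, and observe that the cocycle is trivial; in (2) you verify $\mathbf{(CoSplit)}$ via the coideal complement $\gamma(\overline{H}^{+})A$ (the paper uses $A\gamma(\overline{H}^{+})$, an immaterial left/right variant) and then check that $\Pi^{\circ}(A^{\circ})$ commutes with $\overline{H}^{\ast}$, exactly as the paper does by direct evaluation on elements $a\gamma(\bar h)$. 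One phrasing quibble: the sentence ``Any algebra map into $H^{\circ}$ is convolution invertible'' is too strong as stated---what you need (and use) is that an algebra homomorphism $\Pi^{\circ}$ out of a Hopf algebra $A^{\circ}$ has convolution inverse $\Pi^{\circ}\circ S_{A^{\circ}}$.
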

\begin{proof} (1)  Since $X=\ker\Pi$ is a coideal of $H$, $\Pi$ is a coalgebra map, and so the right $A^{\circ}$-comodule map $\Pi^{\circ}$ of Lemma \ref{HbarHopf}(4) is an algebra map. In particular, it is a cleaving map with convolution inverse $\Pi^{\circ}\circ S_{A^{\circ}}$. By Remark \ref{propremarks}(3) $ H^{\circ}$ is isomorphic to $\overline{H}^* \#_\sigma A^{\circ} $ for some action of $A^{\circ}$ on $\overline{H}^*$ and cocycle $\sigma$. Since the cleaving map $\Pi^{\circ}$ is an algebra map, the cocycle $\sigma$ is trivial. By \cite[Proposition 7.2.3]{Mont} the isomorphism carries the structures as stated.

(2) Since $\gamma$ is a coalgebra map, $H=A\oplus A\gamma(\overline{H}^+)$ and $A\gamma(\overline{H}^+)$ is a coideal of $H$, so (1) applies. It remains to prove the action is trivial, that is $ \Pi^\circ(f')\pi^\circ(f) = \pi^\circ(f)\Pi^\circ(f'),$ for all $f\in\overline{H}^*, f'\in A^\circ$. 

Let $f\in\overline{H}^*, f'\in A^\circ, a\in A, \bar{h}\in \overline{H}$. We have
\begin{eqnarray*}
\pi^\circ(f)\Pi^\circ(f')(a\gamma(\bar{h})) &=& \sum \pi^\circ(f)(a_1\gamma(\bar{h_1}))\Pi^\circ(f')(a_2\gamma(\bar{h_2}))\\ 
&=& \sum f\pi(a_1\gamma(\bar{h_1}))f'\Pi(a_2\gamma(\bar{h_2})) \\
&=& \sum \epsilon(a_1)f\pi\gamma(\bar{h_1})f'(a_2)\epsilon(\gamma(\bar{h_2}))\\ 
&=& \sum \epsilon(a_1)f(\bar{h_1})f'(a_2)\epsilon(\bar{h_2}) \\
&=& f(\bar{h})f'(a),
\end{eqnarray*}
since $\pi\circ\gamma = \id$, which follows from $\gamma$ being a right $\overline{H}$-comodule coalgebra map. The proof for $\Pi^\circ(f')\pi^\circ(f)$ is completely analogous.

\end{proof}

\begin{Rmks}\label{decomprmks}(1) In general the decompositions (\ref{smashed}) and (\ref{mashed}) are not isomorphisms of coalgebras. For instance, consider the group algebra $H=kD$  of the infinite dihedral group $D := \langle a,b: a^2=1, aba=b^{-1} \rangle$, with $k$ of characteristic 0. It decomposes into $A\oplus X$ with $A = k\langle b \rangle$ and $X = A(a-1)$, so $X$ is a coideal $A$-module and (\ref{smashed}) applies. Here, 
$$ A^\circ \cong k[f]\otimes k(k^\times,\cdot),$$ 
where the algebra homomorphism indexed by $\lambda\in k^\times$ is $\chi_\lambda$, with $\chi_{\lambda}(b) = \lambda$. Embedding $A^{\circ}$ into $H^{\circ}$ using the algebra homomorphism $\Pi^{\circ}$, (\ref{smashed}) yields the algebra structure
$$ H^{\circ} \; = \; kC_2 \otimes k[\Pi^{\circ}(f)] \otimes k\{\Pi^{\circ}(\chi_{\lambda}) : \lambda \in k^{\times}\}.$$
However the functional $\Pi^{\circ}(f)$ is \emph{not} primitive and the functionals $\Pi^{\circ}(\chi_\lambda)$ are \emph{not} group-like (although they are units in $H^{\circ}$). Namely, one calculates: 
$$ \Delta(\Pi^{\circ}(f))=\Pi^{\circ}(f)\otimes 1 + \alpha\otimes \Pi^{\circ}(f); $$ 
and 
$$ \Delta(\Pi^{\circ}(\chi_\lambda)) = \frac{1}{2} (\Pi^{\circ}(\chi_\lambda) \otimes \Pi^{\circ}(\chi_\lambda))  \left( (1+\alpha)\otimes 1 + (1-\alpha)\otimes\Pi^{\circ}(\chi_{\lambda^{-2}}) \right) , $$ 
where the group-like involution $\alpha$ is the generator of $\overline{H}^*=kC_2$. For details, see \cite[Corollary 6.7, Remark 6.8]{Jahn} or \cite[Corollary 4.4.6(II) and Appendix A.2]{Cou}; there is also an extensive analysis of $kD^{\circ}$, arriving at the same formulae, in \cite{GL}.

\medskip
\noindent(2) Note that the action of the smash product in (\ref{smashed}) is given by $$ f\cdot\varphi = \sum \Pi^\circ(f_1)\varphi \Pi^\circ(S_{A^\circ} f_2), $$ for any $f\in A^\circ, \varphi\in\overline{H}^*$. In particular, when $k$ has characteristic 0, using the notation (\ref{comm}) and (\ref{Lie}), $\Pi^\circ(\gg)\cong\gg$ and, if $f\in\gg$, then it acts as a derivation on $\overline{H}^*$ (that is, $f\cdot \varphi = \Pi^\circ(f)\varphi - \varphi\Pi^\circ(f)$), even though in general $\Pi^\circ(f)$ won't be a primitive element of $H^\circ$. Similarly, if $\chi_g\in A^\circ$ denotes the algebra homomorphism corresponding to $g\in G_H$, then $\chi_g$ acts by conjugation over $\overline{H}^*$ (i.e. $\chi_g\cdot \varphi = \Pi^\circ(\chi_g)\varphi\Pi^\circ(\chi_{g^{-1}})$), despite in general $\Pi^\circ(\chi_g)$ not being a grouplike in $H^\circ$.

\medskip

\noindent(3) The following question seems on the face of it to be rather optimistic, but we currently do not know any example of an affine commutative-by-finite Hopf algebra $H$ for which there is no choice of the subalgebra $A$ such that the answer is positive for $A \subseteq H$.

\begin{Qtn}\label{smash} When $A \subseteq H$ is affine commutative-by-finite with $A$ semiprime or central, or $H$ pointed, is $H^{\circ}$ always a smash product of $\overline{H}^{\ast}$ by $A^{\circ}$ for a suitable choice of $A$?
\end{Qtn}

\noindent In $\S$\ref{Examples} it is shown that Question \ref{smash} has a positive answer for many classes of commutative-by-finite Hopf algebras. 
\end{Rmks}

\bigskip

\section{The tangential component $W(H^{\circ})$}\label{Wsect}

\subsection{Definition and basic properties} Throughout this section the hypotheses and notation of $\S$\ref{stand} remain in force. We define an analogue for the finite dual of a commutative-by-finite Hopf algebra of the irreducible component $A'$ appearing in (\ref{comm}), as follows.

\begin{Def}\label{Wdef}
Let $A \subseteq H$ be an affine commutative-by-finite Hopf $k$-algebra. The \emph{tangential component} of $H^{\circ}$ is
$$ W(H^{\circ}) := \lbrace f\in H^{\circ}: f((A^+H)^n)=0, \text{ for some } n>0 \rbrace. $$ 
\end{Def}

\begin{Lem}\label{Wlemma}
Retain the notation and hypotheses of Definition \ref{Wdef}. 
\begin{enumerate}
\item $W(H^{\circ})$ is a normal Hopf subalgebra of $H^{\circ}$.
\item $\overline{H}^*$ is a normal Hopf subalgebra of $W(H^{\circ})$.
\end{enumerate}
\end{Lem}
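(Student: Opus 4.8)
The plan is to verify the closure conditions making $W:=W(H^{\circ})$ a Hopf subalgebra, and then to establish normality, from which part (2) follows immediately. Write $I:=A^{+}H=HA^{+}=\ker\pi$, a Hopf ideal of $H$ by Theorem \ref{properties}(2). Since $H$ is a finite $A$-module, $H/I^{n}=H\otimes_{A}(A/(A^{+})^{n})$ is finite dimensional for every $n$, so each $I^{n}$ is a cofinite two-sided ideal and $W=\bigcup_{n}(H/I^{n})^{\ast}$. Now: because $\Delta$ is an algebra map and $\Delta(I)\subseteq I\otimes H+H\otimes I$, one gets $\Delta(I^{n})\subseteq\sum_{a+b=n}I^{a}\otimes I^{b}$; hence if $f,f'$ vanish on $I^{m},I^{m'}$ respectively then $ff'$ vanishes on $I^{m+m'-1}$ (split the comultiplication and note that in each summand one factor lies in a sufficiently high power of $I$), so $W$ is multiplicatively closed; it contains $\epsilon$ since $\epsilon(I)=0$; it is stable under $S$ because $S(I)\subseteq I$ forces $S(I^{n})\subseteq I^{n}$; and it is a subcoalgebra because $I^{n}$ being a two-sided ideal makes $(H/I^{n})^{\ast}$ a subcoalgebra of $H^{\circ}$ (if $f(I^{n})=0$ then $\Delta_{H^{\circ}}(f)$ kills $I^{n}\otimes H+H\otimes I^{n}$, as $xy\in I^{n}$ whenever $x$ or $y$ lies there), and $W$ is the directed union of these. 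Thus $W$ is a Hopf subalgebra.

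Normality is the real content. Since $f(I)=0$ implies $f\in W$ we have $\overline{H}^{\ast}\subseteq W$, and $\overline{H}^{\ast}$ is a normal Hopf subalgebra of $H^{\circ}$ by Lemma \ref{HbarHopf}(2); restricting the adjoint action, it is normal in the intermediate Hopf subalgebra $W$ as well, which is part (2). To get normality of $W$ in $H^{\circ}$ I would reduce modulo $\overline{H}^{\ast}$: let $q\colon H^{\circ}\twoheadrightarrow Q:=H^{\circ}/\!/\overline{H}^{\ast}$ be the associated Hopf surjection. By Lemma \ref{HbarHopf}(7), $(\overline{H}^{\ast})^{+}H^{\circ}\subseteq\ker\iota^{\circ}$, so $\iota^{\circ}$ factors as $H^{\circ}\xrightarrow{q}Q\xrightarrow{\bar{\iota}}A^{\circ}$. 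One checks $\iota^{\circ}(W)\subseteq A'$ (because $(A^{+})^{n}\subseteq I^{n}$), while $\Pi^{\circ}(A')\subseteq W$ (the functional $\widehat{g}=\Pi^{\circ}(g)$ built in Lemma \ref{HbarHopf}(4) kills $HJ$ whenever $g(J)=0$, and for $g\in A'$ one may take $J=(A^{+})^{m}$, so $\widehat{g}$ kills $H(A^{+})^{m}=I^{m}$). Hence $q(W)$ is squeezed between $q(\Pi^{\circ}(A'))$ and $\bar{\iota}^{-1}(A')$; under $\mathbf{(CoSplit)}$, Theorem \ref{decompdual} gives $H^{\circ}\cong\overline{H}^{\ast}\#A^{\circ}$, so $\bar{\iota}$ is an isomorphism and $q(W)=\bar{\iota}^{-1}(A')$. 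As $A'$ is classically a normal Hopf subalgebra of $A^{\circ}=A'\#kG$ (its elements are $\mathrm{ad}$-stable under the primitives, and each group-like $\chi_{g}$ acts by the algebra automorphism $c_{\chi_{g}}$ of $A$, which fixes the identity character and so preserves $A'$), its preimage $\bar{\iota}^{-1}(A')=q(W)$ is normal in $Q$, whence $W$ is normal in $H^{\circ}$.

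The hard part — and the main obstacle — is the general case, where $\mathbf{(CoSplit)}$ is not available and one must control $\ker\iota^{\circ}$ against $(\overline{H}^{\ast})^{+}H^{\circ}$ and pin down $q(W)$ precisely. Two facts I would lean on to close the gap: first, the same "high power of $I$" estimate used above shows directly that $\mathrm{ad}_{\ell}(\varphi)(f)$ and $\mathrm{ad}_{r}(\varphi)(f)$ lie in $W$ whenever $\varphi,f\in W$; second, conjugation by any group-like $\gamma\in G(H^{\circ})$ preserves $W$, since $\gamma(-)\gamma^{-1}$ on $H^{\circ}$ is precomposition with the Hopf algebra automorphism $c_{\gamma}(x)=\sum\gamma(x_{1})x_{2}\gamma^{-1}(x_{3})$ of $H$, and $c_{\gamma}$ carries the normal Hopf subalgebra $A$ into itself (being assembled from $\gamma|_{A}\in G(A^{\circ})$), hence fixes $A^{+}$ and every $I^{n}=(A^{+})^{n}H$. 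Because $\mathrm{ad}_{\ell},\mathrm{ad}_{r}$ are algebra actions, these two facts already yield normality whenever $H^{\circ}$ is generated as an algebra by $W$ together with $G(H^{\circ})$; the remaining cases need the structural analysis through $q$ and $A^{\circ}$ indicated above, and that is where I expect the genuine work to be.
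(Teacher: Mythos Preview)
Your verification that $W$ is a Hopf subalgebra is correct and essentially the same as the paper's (which simply cites \cite[Lemma 9.2.1]{Mont}); likewise your deduction of part (2) from Lemma \ref{HbarHopf}(2) matches the paper exactly.

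The normality argument, however, has a genuine gap. Your reduction modulo $\overline{H}^{\ast}$ does \emph{not} work, even under $\mathbf{(CoSplit)}$: from ``$q(W)$ is normal in $Q$'' you cannot conclude ``$W$ is normal in $H^{\circ}$'' unless $W=q^{-1}(q(W))$, i.e.\ unless $(\overline{H}^{\ast})^{+}H^{\circ}\subseteq W$. But in the smash product $H^{\circ}\cong\overline{H}^{\ast}\#A^{\circ}$ the ideal $(\overline{H}^{\ast})^{+}H^{\circ}$ equals $(\overline{H}^{\ast})^{+}\#A^{\circ}$, which is not contained in $W=\overline{H}^{\ast}\#A'$ whenever $A^{\circ}\neq A'$. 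So only $\mathrm{ad}(\varphi)(f)\in W+(\overline{H}^{\ast})^{+}H^{\circ}$ follows, which is not enough. Your fallback observations---$W$ is $\mathrm{ad}$-stable under itself, and stable under conjugation by group-likes of $H^{\circ}$---are correct but, as you yourself note, do not cover arbitrary $\varphi\in H^{\circ}$; the lemma carries no hypothesis guaranteeing that $H^{\circ}$ is generated by $W$ and its group-likes.

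The paper bypasses all of this with a direct elementwise computation, valid for \emph{every} $\varphi\in H^{\circ}$. Writing $(A^{+}H)^{n}=(A^{+})^{n}H$, one evaluates $\bigl(\sum S^{\circ}(\varphi_{1})f\varphi_{2}\bigr)(a_{1}\cdots a_{n}h)$ for $a_{i}\in A^{+}$ and $h\in H$, expands via Sweedler notation, and uses commutativity of $A$ to collect the outer factors into a single $\varphi\bigl(S(h_{1})\prod_{i}S(a_{i,1})a_{i,3}\,h_{3}\bigr)$. Splitting each $a_{i,2}=\epsilon(a_{i,2})1+a_{i,2}'$ with $a_{i,2}'\in A^{+}$ and using $\sum S(a_{i,1})\epsilon(a_{i,2})a_{i,3}=\epsilon(a_{i})1=0$, every summand vanishes: either some $\epsilon(a_{i})$ appears, or $f$ is applied to an element of $(A^{+})^{n}H$. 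No structural decomposition of $H^{\circ}$ is needed---the commutativity of $A$ is what makes the computation close up.
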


\begin{proof} First, $W(H^{\circ})$ is a Hopf subalgebra because $A^+H$ is a Hopf ideal of $H$ and, for any Hopf ideal $I$ of $H$,
$$ \{ f \in H^{\circ} : f(I^n) = 0 \textit{ for some } n > 0 \} $$
is a Hopf subalgebra of $H^\circ$ by \cite[Lemma 9.2.1]{Mont}. Clearly, $W(H^{\circ})$  contains $\overline{H}^*$, by the definition of the latter subalgebra in Lemma \ref{HbarHopf}(1). Since $\overline{H}^{\ast}$ is normal in $H^{\circ}$ by Lemma \ref{HbarHopf}(2), it is \emph{a fortiori} normal in the Hopf subalgebra $W(H^{\circ})$. 

To prove normality of $W(H^\circ)$ let $\varphi\in H^{\circ}$ and $f\in W(H^{\circ})$, so $f$ vanishes at $(A^+H)^n = (A^+)^n H$ for some positive integer $n$. Take $a_1,\ldots, a_n\in A^+$ and $h\in H$. Then
\begin{eqnarray*}
&& \left[\sum S^\circ(\varphi_1)f\varphi_2\right](a_1\ldots a_nh) \\
&=& \sum S^\circ(\varphi_1)(a_{1,1}\ldots a_{n,1}h_1)f(a_{1,2}\ldots a_{n,2}h_2)\varphi_2(a_{1,3}\ldots a_{n,3}h_3) \\
&=&\sum \varphi_1(S(h_1)S(a_{n,1})\ldots S(a_{1,1}))f(a_{1,2}\ldots a_{n,2}h_2)\varphi_2(a_{1,3}\ldots a_{n,3}h_3) \\
&=& \sum \varphi(S(h_1)S(a_{n,1})\ldots S(a_{1,1})a_{1,3}\ldots a_{n,3}h_3)f(a_{1,2}\ldots a_{n,2}h_2) \\
&=& \sum \varphi(S(h_1)S(a_{n,1})a_{n,3}\ldots S(a_{1,1})a_{1,3} h_3)f(a_{1,2}\ldots a_{n,2}h_2),
\end{eqnarray*}
since $A$ is commutative. For each $i$ we write $a_{i,2}=\epsilon(a_{i,2})1+a_{i,2}'$, where $a_{i,2}'\in A^+$. Writing $[1,n]$ for $\lbrace 1,2,\ldots,n \rbrace$, we now have $$ f(a_{1,2}\ldots a_{n,2}h_2)=\sum_{\mathcal{P}\subseteq [1,n]}  \prod_{i\in\mathcal{P}}\epsilon(a_{i,2}) f\left(\prod_{i\in[1,n]\setminus\mathcal{P}}a_{i,2}'h_2\right).  $$ 
Therefore,
\begin{eqnarray*}
&&\left[\sum S^\circ(\varphi_1)f\varphi_2\right](a_1\ldots a_n h) \\
&=& \sum \sum_{\mathcal{P}\subseteq[1,n]} \varphi(S(h_1)S(a_{n,1})a_{n,3}\ldots S(a_{1,1})a_{1,3} h_3) \prod_{i\in\mathcal{P}} \epsilon(a_{i,2}) f\left(\prod_{i\in[1,n]\setminus\mathcal{P}} a_{i,2}' \, h_2\right) \\
&=& \sum \sum_{\mathcal{P}\subseteq[1,n]} \varphi \left(S(h_1) \prod_{i\in\mathcal{P}} S(a_{i,1})\epsilon(a_{i,2})a_{i,3} \prod_{i\in[1,n]\setminus\mathcal{P}} S(a_{i,1})a_{i,3} \, h_3 \right) f\left(\prod_{i\in[1,n]\setminus\mathcal{P}} a_{i,2}' h_2\right) \\
&=& \sum \sum_{\mathcal{P}\subseteq[1,n]} \prod_{i\in\mathcal{P}} \epsilon(a_i) \, \varphi \left(S(h_1) \prod_{i\in[1,n]\setminus\mathcal{P}} S(a_{i,1})a_{i,3} \, h_3 \right) f\left(\prod_{i\in[1,n]\setminus\mathcal{P}} a_{i,2}' h_2\right).
\end{eqnarray*}
The summands where $\mathcal{P}\neq\emptyset$ vanish due to $\epsilon(a_i)=0$ for all $i\in[1,n]$ and the summands where $\mathcal{P}=\emptyset$ vanish because $f$ is zero on $(A^+H)^n={A^+}^nH$. The proof for the left adjoint action is similar, so  normality of $W(H^{\circ})$ in $H^{\circ}$ is proved. 
\end{proof}

\subsection{Decomposition of $W(H^\circ)$}

To analyse the structure of $W(H^{\circ}) $ we use the Hopf algebra surjection $\iota^{\circ}$ from $H^{\circ}$ to $A^{\circ}$ of Lemma \ref{HbarHopf}(5), given by restriction of functionals from $H$ to $A$. In the following result we decompose $W(H^\circ)$ into the crossed product $\overline{H}^* \#_\sigma A'$. Notice that the isomorphism (\ref{Wcrossed}) below does not require hypothesis $\mathrm{\mathbf{(CoSplit)}}$, in contrast to Theorem \ref{decompdual}. 

\begin{Thm}\label{decompW} Let $A \subseteq H$ be an affine commutative-by-finite Hopf algebra with $A$ semiprime or central, or $H$ pointed. Recall that  $A^{\circ}$ decomposes as $A' \#kG_H$ as in (\ref{comm}).
\begin{enumerate}
\item $\iota^{\circ}(W(H^{\circ})) = A'$.
\item $W(H^{\circ})$ decomposes as a crossed product for a cocycle $\sigma$ and an action of $A'$ on $\overline{H}^*$. That is, as algebras, left $\overline{H}^{\ast}$-modules and right $A'$-comodules,
\begin{equation}\label{Wcrossed} W(H^{\circ})\cong \overline{H}^* \#_\sigma A'. 
\end{equation}
\item $W(H^{\circ})^{\co \iota^\circ_{\mid_{W(H^\circ)}}} = \overline{H}^{\ast}.$
\item Suppose that $k$ has characteristic 0. Then $\ker \iota^\circ_{\mid_{W(H^\circ)}} = (\overline{H}^{\ast})^+ W(H^{\circ}).$
\item Suppose that $A \subseteq H$ satisfies $\mathrm{\mathbf{(CoSplit)}}$. Then $\sigma$ is trivial - that is, (\ref{Wcrossed}) is a decomposition of $W(H^{\circ})$ as a smash product.  When $k$ has characteristic 0, so that $A' = U(\mathfrak{g})$, the action of $U(\gg)$ on $\overline{H}^*$ is given by  
$$ f \cdot \varphi = \Pi^\circ(f) \varphi - \varphi\Pi^\circ(f), $$ 
for $f\in\gg, \varphi\in\overline{H}^*$. That is, $W(H^\circ)$ is a differential operator ring over $\overline{H}^*$.
\end{enumerate}
\end{Thm}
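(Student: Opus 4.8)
The plan is to treat the five parts of Theorem \ref{decompW} in order, since each builds on the previous ones, and to lean heavily on the machinery already set up in Lemma \ref{HbarHopf} and Theorem \ref{decompdual}. For part (1), I would show the two inclusions separately. If $f \in W(H^\circ)$ vanishes on $(A^+H)^n = (A^+)^n H$, then its restriction $\iota^\circ(f) = f \circ \iota$ vanishes on $(A^+)^n$, so $\iota^\circ(f) \in A'$ by the characterisation (\ref{Lie}) of $A'$; hence $\iota^\circ(W(H^\circ)) \subseteq A'$. For the reverse inclusion, given $\alpha \in A'$ vanishing on $(A^+)^n$, I would use the functional $\widehat{\alpha} = \Pi^\circ(\alpha)$ constructed in the proof of Lemma \ref{HbarHopf}(4)(5): since $\widehat{\alpha}$ is defined by $\widehat{\alpha}(X) = 0$ and $\widehat{\alpha}_{\mid A} = \alpha$, and since $(A^+H)^n = (A^+)^n H = (A^+)^n \oplus X(A^+)^n$ by the right $A$-module decomposition (\ref{decomp}), we get $\widehat{\alpha}((A^+H)^n) = \alpha((A^+)^n) = 0$, so $\Pi^\circ(\alpha) \in W(H^\circ)$ and $\iota^\circ(\Pi^\circ(\alpha)) = \alpha$. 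This gives (1).

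For part (2), the key point is that $W(H^\circ)$ is an $\overline{H}^*$-cleft (i.e.\ crossed product) extension. Here $\overline{H}^* \subseteq W(H^\circ)$ is a finite-dimensional normal Hopf subalgebra (Lemma \ref{Wlemma}(2)), so by the general theory (\cite[Theorem 2.1]{Schn} or the Doi--Takeuchi argument as in Remark \ref{propremarks}(3)) $W(H^\circ)$ is an $\overline{H}^*$-Galois extension of its coinvariants, which I will identify in part (3). To get the crossed-product (cleft) structure I need a cleaving map $A' \to W(H^\circ)$: the restriction of $\Pi^\circ$ to $A'$ works, because $\Pi^\circ(A') \subseteq W(H^\circ)$ by the computation just done, and $\Pi^\circ$ is a right $A^\circ$-comodule map with convolution inverse $\Pi^\circ \circ S_{A^\circ}$ by Lemma \ref{HbarHopf}(4); restricting the comodule structure along $A' \hookrightarrow A^\circ$ makes $\Pi^\circ_{\mid A'}$ a cleaving map for the $A'$-extension. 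Then \cite[Proposition 7.2.3]{Mont} delivers (\ref{Wcrossed}) with the stated compatibilities, for some cocycle $\sigma$ and some action of $A'$ on $\overline{H}^*$. Part (3) then follows: the coinvariants of the $A'$-comodule $W(H^\circ)$ are $W(H^\circ) \cap (H^\circ)^{\co\iota^\circ} = W(H^\circ) \cap \overline{H}^* = \overline{H}^*$ using Lemma \ref{HbarHopf}(6).

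For part (4), with $\charr k = 0$ so that $A' = U(\gg)$ is connected, I want $\ker \iota^\circ_{\mid W(H^\circ)} = (\overline{H}^*)^+ W(H^\circ)$. One inclusion is Lemma \ref{HbarHopf}(7): $(\overline{H}^*)^+ H^\circ \subseteq \ker\iota^\circ$, and intersecting with $W(H^\circ)$ gives $(\overline{H}^*)^+ W(H^\circ) \subseteq \ker\iota^\circ_{\mid W(H^\circ)}$. For the reverse inclusion I would use the crossed-product decomposition $W(H^\circ) = \overline{H}^* \#_\sigma U(\gg)$: an element of the kernel of $\iota^\circ$ lies in $\overline{H}^* \# U(\gg)^+$, and since $U(\gg)$ is connected, $U(\gg)^+ = U(\gg)^+ \cdot 1$ is generated by primitives, giving $\overline{H}^* \# U(\gg)^+ = (\overline{H}^*)^+ W(H^\circ) + $ (lower-filtration terms); a filtration/induction argument on the standard filtration of $U(\gg)$ closes this, essentially the faithfully-flat Galois descent statement that $W(H^\circ)/(\overline{H}^*)^+ W(H^\circ) \cong U(\gg)$ as coalgebras. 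This is the step I expect to be the main obstacle: making the "kernel equals $(\overline{H}^*)^+ W(H^\circ)$" precise requires either a careful filtered argument or invoking the right structural result about cleft extensions ($H^\circ$ is faithfully flat over $\overline{H}^*$ and $(\overline{H}^*)^+ H^\circ$ is the kernel of the projection onto the quotient coalgebra), and one must check it localises correctly to $W(H^\circ)$.

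For part (5), assume $\mathbf{(CoSplit)}$. Then by Theorem \ref{decompdual}(1) the global cleaving map $\Pi^\circ : A^\circ \to H^\circ$ is an \emph{algebra} map, hence so is its restriction to $A'$; an algebra cleaving map forces the cocycle $\sigma$ to be trivial, so (\ref{Wcrossed}) is a genuine smash product $\overline{H}^* \# A'$. When $\charr k = 0$ and $A' = U(\gg)$, the smash-product action of $f \in \gg$ on $\varphi \in \overline{H}^*$ is, by Remark \ref{decomprmks}(2) (which records $f \cdot \varphi = \sum \Pi^\circ(f_1)\varphi\,\Pi^\circ(S_{A^\circ}f_2)$ and its specialisation for primitive $f$), exactly $\Pi^\circ(f)\varphi - \varphi\Pi^\circ(f)$; since $\gg$ generates $U(\gg)$ and acts by these commutators, $W(H^\circ) \cong \overline{H}^* \# U(\gg)$ is a differential operator ring over $\overline{H}^*$ with the $\gg$-action by the inner derivations $\ad \Pi^\circ(f)$ of $H^\circ$ restricted to $\overline{H}^*$. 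I would cite Remark \ref{decomprmks}(2) here rather than recompute, noting only that one must check $\Pi^\circ(f) \varphi - \varphi \Pi^\circ(f)$ indeed lands back in $\overline{H}^*$, which is immediate from normality of $\overline{H}^*$ in $H^\circ$ (Lemma \ref{HbarHopf}(2)).
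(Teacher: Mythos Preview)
Your argument for part (2) has a genuine gap. You claim that $\Pi^\circ$ has convolution inverse $\Pi^\circ \circ S_{A^\circ}$, citing Lemma \ref{HbarHopf}(4). But that lemma only establishes that $\Pi^\circ$ is a right $A^\circ$-comodule map; it says nothing about convolution invertibility. The formula $\Pi^\circ \circ S_{A^\circ}$ is a convolution inverse precisely when $\Pi^\circ$ is an \emph{algebra} map, since then $\sum \Pi^\circ(f_1)\Pi^\circ(S f_2) = \Pi^\circ\bigl(\sum f_1 S(f_2)\bigr) = \epsilon(f)1$. But $\Pi^\circ$ being an algebra map is exactly what $\mathbf{(CoSplit)}$ buys you (Theorem \ref{decompdual}(1)), and the whole point of part (2) is that it holds \emph{without} $\mathbf{(CoSplit)}$. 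The paper supplies the missing idea: $A'$ is connected (its coradical is $k$), and by Takeuchi's lemma \cite[Lemma 14]{Tak} a linear map from a coalgebra to an algebra is convolution invertible as soon as its restriction to the coradical is. Since $\Pi^\circ(1) = 1$, this gives convolution invertibility of $\Pi^\circ_{\mid A'}$ for free, with no hypothesis on $\Pi$. Your parts (1), (3), and (5) match the paper.

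For part (4), your filtration sketch would probably work but is harder than necessary, and you rightly flag it as the obstacle. The paper's argument is much cleaner: from the crossed product (\ref{Wcrossed}) one reads off $W(H^\circ)/(\overline{H}^*)^+W(H^\circ) \cong U(\mathfrak{g})$, and by part (1) the surjection $\iota^\circ_{\mid W(H^\circ)}$ also has image $U(\mathfrak{g})$. Since $(\overline{H}^*)^+W(H^\circ) \subseteq \ker\iota^\circ_{\mid W(H^\circ)}$, the quotient map factors as $U(\mathfrak{g}) \twoheadrightarrow W(H^\circ)/\ker\iota^\circ_{\mid W(H^\circ)} \cong U(\mathfrak{g})$. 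But $U(\mathfrak{g})$ is noetherian (as $\mathfrak{g}$ is finite dimensional) and hence not isomorphic to a proper quotient of itself, forcing equality. No induction on filtration degree is needed.
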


\begin{proof}

(1) Since $\iota^{\circ}$ is just restriction of functionals to the subdomain $A$, it is clear that $\iota^{\circ}(W(H^{\circ})) \subseteq A'$. To see that $\im \iota^{\circ}_{\mid_{W(H^\circ)}} = A'$, note the definition (\ref{Lie}) of $A'$, and then observe that, in the proof of Lemma \ref{HbarHopf}(4), if $f \in A' \subseteq A^{\circ}$, then the map $\widehat{f} \in H^{\circ}$ constructed there, with $\Pi^{\circ}(f) = \widehat{f}$, is actually in $W(H^{\circ})$.  Since $\iota^{\circ} \circ \Pi^{\circ} = \id_{A^{\circ}}$ by Lemma \ref{HbarHopf}(5), $\iota^{\circ}(\widehat{f}) = f$ and the result follows.

(2),(3): By Lemma \ref{HbarHopf}(6), $H^{\circ}$ is a right $A^{\circ}$-comodule algebra with $\rho=(\id\otimes\iota^{\circ})\Delta$. In particular, since $W(H^{\circ})$ is a Hopf subalgebra of $H^{\circ}$ by Lemma \ref{Wlemma}(2), and using (1) of the present theorem,
\begin{equation} \label{comodule} \rho(W(H^{\circ}))\subseteq W(H^{\circ})\otimes \iota^{\circ}(W(H^{\circ})) = W(H^{\circ})\otimes A'.
\end{equation} 
That is, the structure of right $A^{\circ}$-comodule of $H^{\circ}$ restricts to a structure of right $A'$-comodule on $W(H^{\circ})$.

We have noted in the proof of (1) that the right $A^{\circ}$-comodule map $\Pi^{\circ}: A^{\circ} \longrightarrow H^{\circ}$ maps $A'$ into $W(H^{\circ})$. Thus, in light of (\ref{comodule}), $\Pi^{\circ}_{\mid_{A'}}: A' \longrightarrow W(H^{\circ})$ is an injection of right $A'$-comodules.

A linear map from a coalgebra to an algebra is convolution invertible if and only if its restriction to the coradical is convolution invertible, see \cite[Lemma 14]{Tak} or \cite[Lemma 5.2.10]{Mont}. But $A'$ is connected by its definition as the irreducible component of the coalgebra $A^{\circ}$ containing $\varepsilon$,  \cite[page 78]{Mont}; that is, its coradical is $k$. Therefore, in the language of Remark \ref{propremarks}(3), $\Pi^{\circ}_{\mid_{A'}}$ is a cleaving map and by the result of Doi and Takeuchi recalled in Remark \ref{propremarks}(3), 
$$ W(H^{\circ}) \; \cong  \; W(H^{\circ})^{\co \iota^{\circ}_{\mid_{W(H^\circ)}}} \#_\sigma A'. $$

Since the $A'$-comodule structure of $W(H^{\circ})$ is the restriction of the $A^{\circ}$-comodule structure of $H^{\circ}$, it follows from Lemma \ref{HbarHopf}(6) that 
$$ W(H^{\circ})^{\co \iota^\circ_{\mid_{W(H^\circ)}}} = W(H^{\circ}) \cap  (H^{\circ})^{\co\iota^\circ}  =  W(H^{\circ}) \cap  \overline{H}^* = \overline{H}^*. $$ 

(4) Since $\mathrm{char}k = 0$, $A' = U(\gg)$ by (\ref{Lie}). It is clear from (\ref{Wcrossed}) that 
$$ W(H^{\circ})/(\overline{H}^{\ast})^+ W(H^{\circ}) \quad \cong \quad U(\mathfrak{g}). $$
Moreover we know from Lemma \ref{HbarHopf}(7) that 
\begin{equation}\label{into} (\overline{H}^{\ast})^+W(H^{\circ})\subseteq \ker \iota^\circ_{\mid_{W(H^\circ)}}. 
\end{equation}
By (1), $\im \iota^\circ_{\mid_{W(H^\circ)}} = U(\mathfrak{g})$. Since $A$ is affine, $\mathfrak{g}$ is finite dimensional, and hence $U(\mathfrak{g})$ is a noetherian algebra. In particular $U(\mathfrak{g})$ cannot be isomorphic to a proper factor of itself, and hence (\ref{into}) must be an equality, proving (4).

(5) This follows from (2) and Theorem \ref{decompdual}(1), since the cocycle $\sigma$ in (2) is in this case trivial by the latter result. The last part is a special case of Remark \ref{decomprmks}(2).
\end{proof}

\medskip

\noindent Recall that, in characteristic 0, a finite dimensional Hopf algebra is semisimple if and only if cosemisimple \cite{LR1}, so the following improvement of Theorem \ref{decompW}(5) applies when $\overline{H}$ is semisimple if $k$ has characteristic 0.

\begin{Cor}\label{inner}  Let $A \subseteq H$ be an affine commutative-by-finite Hopf $k$-algebra with $A$ semiprime or central, or $H$ pointed. Retain the additional notation of Theorem \ref{decompW}. Assume that $A \subseteq H$ satisfies hypothesis $\mathrm{\mathbf{(CoSplit)}}$, and that $\overline{H}$ is cosemisimple. Then the action of $U(\mathfrak{g})$ on $\overline{H}^{\ast}$ is inner, so that, as algebras,
$$ W(H^{\circ}) \; \cong \; \overline{H}^{\ast} \otimes U(\mathfrak{g}). $$
\end{Cor}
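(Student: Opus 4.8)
The plan is to build on Theorem \ref{decompW}(5), which already gives $W(H^{\circ}) \cong \overline{H}^{\ast} \# U(\mathfrak{g})$ as a differential operator ring, where $f \in \mathfrak{g}$ acts on $\varphi \in \overline{H}^{\ast}$ by the commutator $f \cdot \varphi = \Pi^{\circ}(f)\varphi - \varphi \Pi^{\circ}(f)$. The goal is to show that under cosemisimplicity of $\overline{H}$ this action becomes inner, meaning each generator $f \in \mathfrak{g}$ can be realised inside $\overline{H}^{\ast}$ itself, i.e. there is an algebra map $u : U(\mathfrak{g}) \to W(H^{\circ})$ whose image centralises $\overline{H}^{\ast}$ and for which $\Pi^{\circ}(f) - u(f) \in \overline{H}^{\ast}$ for all $f \in \mathfrak{g}$; replacing the embedded copy of $U(\mathfrak{g})$ by $u(U(\mathfrak{g}))$ then converts the smash product into the ordinary tensor product $\overline{H}^{\ast} \otimes U(\mathfrak{g})$.

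The key input is that $\overline{H}$ cosemisimple forces $\overline{H}^{\ast}$ to be a semisimple (finite dimensional) algebra, hence separable over $k$ (as $k$ is algebraically closed, $\overline{H}^{\ast}$ is a product of matrix algebras). First I would invoke the classical fact that derivations of a separable $k$-algebra are inner: given the derivation $\delta_f := [\Pi^{\circ}(f), -]$ of $\overline{H}^{\ast}$, wait — one must be slightly careful, since $\Pi^{\circ}(f)$ lies in $W(H^{\circ})$, not a priori in $\overline{H}^{\ast}$. The cleaner route is to observe that $W(H^{\circ})$ is a $k$-algebra containing the separable subalgebra $\overline{H}^{\ast}$, so by the relative form of the Wedderburn principal theorem / separability one can split $W(H^{\circ})$ as a $\overline{H}^{\ast}$-bimodule; concretely, for each $f \in \mathfrak{g}$ the element $\Pi^{\circ}(f)$ induces an inner derivation of $\overline{H}^{\ast}$, so there exists $\varphi_f \in \overline{H}^{\ast}$ with $[\Pi^{\circ}(f), \psi] = [\varphi_f, \psi]$ for all $\psi \in \overline{H}^{\ast}$. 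Then $t_f := \Pi^{\circ}(f) - \varphi_f$ centralises $\overline{H}^{\ast}$ inside $W(H^{\circ})$. The map $f \mapsto t_f$ is linear, and one checks directly (using that $\Pi^{\circ}$ is already multiplicative when restricted appropriately, or rather using the Lie bracket relations inside $U(\mathfrak{g})$) that the $t_f$ generate a copy of $U(\mathfrak{g})$ — here I would use that $[t_f, t_{f'}] - t_{[f,f']}$ centralises $\overline{H}^{\ast}$ and lies in the augmentation-type filtration, and appeal to the structure of $W(H^{\circ})$ as a free $\overline{H}^{\ast}$-module with an iterated-Ore-extension-like filtration to conclude it vanishes.

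Once the commuting copies $\overline{H}^{\ast}$ and $U(\mathfrak{g}) = \langle t_f : f \in \mathfrak{g}\rangle$ are in hand, the multiplication map $\overline{H}^{\ast} \otimes U(\mathfrak{g}) \to W(H^{\circ})$ is an algebra homomorphism, and it is bijective because of the $\overline{H}^{\ast}$-freeness / PBW-type basis statement already implicit in Theorem \ref{decompW}(5): $W(H^{\circ})$ is free as a left $\overline{H}^{\ast}$-module on a PBW basis of $U(\mathfrak{g})$, and the images of $t_f$ differ from those of $\Pi^{\circ}(f)$ by elements of $\overline{H}^{\ast}$, hence give another free basis. That yields $W(H^{\circ}) \cong \overline{H}^{\ast} \otimes U(\mathfrak{g})$ as algebras.

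The main obstacle I anticipate is the verification that the corrected elements $t_f$ satisfy exactly the enveloping-algebra relations (no correction terms surviving in $[t_f, t_{f'}] = t_{[f,f']}$), rather than merely generating \emph{some} algebra commuting with $\overline{H}^{\ast}$; this requires controlling the ``lower order'' terms, which is where one genuinely uses the filtered/graded structure of the smash product $\overline{H}^{\ast} \# U(\mathfrak{g})$ from Theorem \ref{decompW}(5) and the fact that its associated graded is $\overline{H}^{\ast} \otimes S(\mathfrak{g})$ with $\overline{H}^{\ast}$ central. A secondary point needing care is whether the derivation induced on $\overline{H}^{\ast}$ by $\Pi^{\circ}(f)$ is genuinely a derivation valued in $\overline{H}^{\ast}$ (it is, since $\overline{H}^{\ast}$ is a normal — in particular two-sided ideal-behaved under the adjoint — subalgebra of $W(H^{\circ})$ by Lemma \ref{Wlemma}(2), so $[\Pi^{\circ}(f), \overline{H}^{\ast}] \subseteq \overline{H}^{\ast}$), after which separability of $\overline{H}^{\ast}$ does the rest.
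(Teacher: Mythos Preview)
Your approach is essentially the same as the paper's: start from Theorem \ref{decompW}(5), use cosemisimplicity of $\overline{H}$ to see that $\overline{H}^{\ast}$ is semisimple (a product of matrix algebras over $k$), deduce that the derivations coming from $\mathfrak{g}$ are inner, and replace each generator $\Pi^{\circ}(f)$ by $t_f := \Pi^{\circ}(f) - \varphi_f$ to obtain elements centralising $\overline{H}^{\ast}$.

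The one point where you and the paper diverge is precisely the ``main obstacle'' you flag. Your filtration argument only shows that $[t_f, t_{f'}] - t_{[f,f']}$ lies in filtration degree $0$, i.e.\ in $\overline{H}^{\ast}$; combined with centrality this puts it in $Z(\overline{H}^{\ast})$, but that need not be zero. The paper sidesteps this by invoking Jacobson's theorem (as in \cite[Example 6.1.6, Proposition 6.2.1 and the discussion following]{Mont}) to choose the implementing elements so that $f \mapsto u(f)$ is itself a Lie algebra homomorphism from $\mathfrak{g}$ into $\overline{H}^{\ast}$ (with the commutator bracket). With this choice one has $[t_f,t_{f'}] = t_{[f,f']}$ exactly, the subalgebra generated by the $t_f$ is a copy of $U(\mathfrak{g})$ commuting with $\overline{H}^{\ast}$, and the multiplication map $\overline{H}^{\ast}\otimes U(\mathfrak{g}) \to W(H^{\circ})$ is the desired algebra isomorphism. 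Concretely, in characteristic $0$ one can always pick $u(f)$ in the trace-zero part of each matrix block, which furnishes a Lie-algebra section of $\overline{H}^{\ast} \twoheadrightarrow \overline{H}^{\ast}/Z(\overline{H}^{\ast})$ and hence the required Lie lift; this is the missing ingredient your filtration argument would need.
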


\begin{proof} By Theorem \ref{decompW}(5) $W(H^{\ast})$ is a smash product of algebras, 
\begin{equation}\label{start} \overline{H}^{\ast} \# U(\mathfrak{g}).
\end{equation}
By our hypothesis, $\overline{H}^{\ast}$ is semisimple, and so by Jacobson's theorem, (see for example \cite[Example 6.1.6, Proposition 6.2.1 and discussion following]{Mont}), since $k$ is algebraically closed, $\mathfrak{g}$ acts by inner derivations on $\overline{H}^{\ast}$. That is, there is a copy $u(\mathfrak{g})$ of $\mathfrak{g}$ contained in $\overline{H}^{\ast}$, (where we view $\overline{H}^{\ast}$ as a Lie algebra via the commutator bracket), such that for all $x \in \mathfrak{g}$ and $h \in \overline{H}^{\ast}$, 
$$  x\cdot h \; =\; [u(x),\, h]. $$
Replacing the copy of $\mathfrak{g}$ appearing in (\ref{start}) with the isomorphic Lie algebra 
$$\widehat{\mathfrak{g}} \; := \; \sum_{i}k(x_i - u(x_i)), $$
where $\{x_i \}$ is a $k$-basis of $\mathfrak{g}$, we see that 
$$  W(H^{\circ}) \; = \;  \overline{H}^{\ast} \otimes U(\widehat{\mathfrak{g}}), $$
as claimed.
\end{proof}


\bigskip

\section{The subcoalgebra $\widehat{kG_H}$}\label{kGsec}

\subsection{Orbits in $\maxspec(A)$}\label{orbitsec} Recall the description (\ref{groupalg}) in $\S$\ref{commcase} of the Hopf subalgebra $kG$ of $A^{\circ}$. In this section we extend this viewpoint to $H$, defining a subcoalgebra $\widehat{kG_H}$ of $H^{\circ}$ which is a Hopf subalgebra under mild additional hypotheses and which reduces to $kG_H$ when $H = A$. We saw in Theorem \ref{properties}(3) that the left adjoint action of $H$ on $A$ factors over the ideal $A^+ H$, so that $A$ is a left $\overline{H}$-module algebra. An ideal $I$ of $A$ is said to be \textit{$\overline{H}$-stable} if 
$$ (\mathrm{ad}_{\ell}h)(a)\in I, \; \text{for all } h\in\overline{H}, a\in I. $$ 
Since the extension $A\subseteq H$ is faithfully flat by Theorem \ref{properties}(4), $I$ is $\overline{H}$-stable if and only if $HI=IH$.

\begin{Def}\label{cored}
Let $T$ be a Hopf algebra and $R$ a left $T$-module algebra, with $V$ a subspace of $R$. The $T$-\emph{core} of $V$ is the subspace $$ V^{(T)} \; = \; \{ v \in V : t\cdot v \in V, \,  \forall t \in T \}.$$
\end{Def}

\noindent Note that if $V$ is a right or left ideal of $R$ in Definition \ref{cored} then so is $V^{(T)}$. The crucial parts (2) and (3) of the following result are essentially due to Skryabin \cite[Theorems 1.1, 1.3]{Sk10}. For the proofs of Proposition \ref{Skryab} and Corollary \ref{orbit}, see \cite[Proposition 4.4]{BrCou}.

\begin{Prop}\label{Skryab}
Let $A \subseteq H$ be an affine commutative-by-finite Hopf algebra and let $\mm\in\maxspec(A)$.
\begin{enumerate}
\item $\mm^{(\overline{H})}$ is the largest $\overline{H}$-stable ideal of $A$ contained in $\mm$.
\item $A/\mm^{(\overline{H})}$ is an $\overline{H}$-simple algebra - that is, the $\overline{H}$-core of each of its proper ideals is $\{0\}$.
\item $A/\mm^{(\overline{H})}$ is a Frobenius algebra. In particular,  $A/\mm^{(\overline{H})}$ is finite dimensional.
\end{enumerate}
\end{Prop}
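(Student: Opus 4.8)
The plan is to reduce everything to the results of Skryabin \cite{Sk10} on $\overline{H}$-module algebras, after first establishing the elementary statement (1) by hand. For part (1), I would argue directly from Definition \ref{cored}: since $\mm$ is an ideal of $A$, the $\overline{H}$-core $\mm^{(\overline{H})}$ is itself an ideal of $A$ (this was noted after Definition \ref{cored}), it is contained in $\mm$, and by construction it is $\overline{H}$-stable. Conversely, if $J \subseteq \mm$ is any $\overline{H}$-stable ideal, then every element of $J$ lies in $\mm$ and is carried into $J \subseteq \mm$ by the $\overline{H}$-action, so $J \subseteq \mm^{(\overline{H})}$; hence $\mm^{(\overline{H})}$ is the unique largest such ideal. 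The only subtlety is to confirm that the left adjoint $\overline{H}$-action on $A$ used in Definition \ref{cored} is precisely the one making $A$ a left $\overline{H}$-module algebra, which is exactly Theorem \ref{properties}(3), so nothing more is needed.

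For parts (2) and (3), the strategy is to invoke Skryabin's theorems directly, checking that his hypotheses are met in our setting. The ambient facts are: $A$ is a commutative (hence, in the relevant sense, a reasonable) left $\overline{H}$-module algebra with $\overline{H}$ finite dimensional (Theorem \ref{properties}(2),(3)); the extension $A \subseteq H$ is faithfully flat $\overline{H}$-Galois (Theorem \ref{properties}(9)), so that, as recorded just before Definition \ref{cored}, an ideal $I$ of $A$ is $\overline{H}$-stable precisely when $HI = IH$; and $A$ is Noetherian (Theorem \ref{properties}(1)). Given these, part (2) follows from \cite[Theorem 1.1]{Sk10}: the quotient $A/\mm^{(\overline{H})}$ inherits an $\overline{H}$-action, and by maximality of $\mm^{(\overline{H})}$ among $\overline{H}$-stable ideals contained in $\mm$ — together with the fact (from (1) applied inside the quotient, or a standard correspondence argument) that every nonzero $\overline{H}$-stable ideal of $A/\mm^{(\overline{H})}$ pulls back to an $\overline{H}$-stable ideal of $A$ strictly larger than $\mm^{(\overline{H})}$ — one deduces that $A/\mm^{(\overline{H})}$ has no nonzero proper $\overline{H}$-stable ideal, i.e. it is $\overline{H}$-simple. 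Part (3) is then \cite[Theorem 1.3]{Sk10} (or the combination of Skryabin's results on $\overline{H}$-simple module algebras that are finitely generated over their invariants), which gives that such an $\overline{H}$-simple algebra is Frobenius, and in particular finite dimensional; here one uses that $A$ is affine and $H$ is module-finite over $A$ to meet the finiteness conditions in Skryabin's hypotheses. As the excerpt states, a self-contained account appears in \cite[Proposition 4.4]{BrCou}, so the write-up can legitimately cite that.

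The main obstacle, such as it is, is bookkeeping rather than mathematics: one must make sure Skryabin's framework (which is phrased for module algebras over finite dimensional Hopf algebras, with various Noetherian/affine finiteness assumptions) lines up exactly with the data $A \subseteq H$, and in particular that the notion of ``$\overline{H}$-core/$\overline{H}$-stable ideal'' used here coincides with the one in \cite{Sk10}. This is precisely the content of the equivalence ``$I$ is $\overline{H}$-stable $\iff HI = IH$'' recorded before Definition \ref{cored}, which in turn rests on faithful flatness of the Galois extension; so the cleanest route is simply to note this equivalence, cite Theorem \ref{properties}(9), and then defer to \cite[Theorems 1.1, 1.3]{Sk10} and \cite[Proposition 4.4]{BrCou} for parts (2) and (3). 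No delicate computation is required.
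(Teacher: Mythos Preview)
Your proposal is correct and matches the paper's approach: the paper gives no proof in the body, instead citing \cite[Theorems 1.1, 1.3]{Sk10} for the substantive parts (2) and (3) and referring to \cite[Proposition 4.4]{BrCou} for a self-contained account, which is exactly the route you outline.

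One minor point worth tightening: you invoke Theorem \ref{properties}(9) (faithfully flat $\overline{H}$-Galois) to identify $\overline{H}$-stability with $HI = IH$, but Proposition \ref{Skryab} is stated \emph{without} the hypothesis that $A$ is semiprime or central or $H$ pointed, so Theorem \ref{properties}(9) is not available in this generality. Fortunately you do not actually need it: Skryabin's results are phrased purely for $\overline{H}$-module algebras, and the only inputs required are that $\overline{H}$ is finite dimensional and $A$ is a commutative Noetherian left $\overline{H}$-module algebra, all of which follow from Theorem \ref{properties}(1)--(3). Your direct arguments for (1) and (2) likewise use only the module-algebra structure, so simply drop the appeal to (9) and the proof goes through as written.
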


\noindent Following still Skryabin, we can use Proposition \ref{Skryab}(2) to extend the fundamental idea that a group of automorphisms of the algebra $A$ defines orbits  in $\maxspec (A)$, to the present setting where the group is replaced by a Hopf algebra.

\begin{Def}\label{deforbit}
Let $A \subseteq H$ be an affine commutative-by-finite Hopf algebra.

\begin{enumerate}
\item For each $\mm\in\maxspec(A)$, its $\overline{H}$-\textit{orbit} is the set of maximal ideals with the same $\overline{H}$-core, $$\OO_{\mm} = \lbrace \mm'\in\maxspec(A): {\mm'}^{(\overline{H})} = \mm^{(\overline{H})} \rbrace. $$

\item Define an equivalence relation $\sim^{(\overline{H})}$ on $G_H = \maxspec(A)$ as follows: for $g,h\in G_H$,  $g\sim^{(\overline{H})} h$ if and only if $ \mm_g^{(\overline{H})}=\mm_h^{(\overline{H})}.$
\end{enumerate}
\end{Def}

\noindent It's clear that $\sim^{(\overline{H})}$ is an equivalence relation; but it is far from obvious, although it follows easily from Proposition \ref{Skryab}(2), that the $\overline{H}$-orbits have the following simple description.

\begin{Cor}\label{orbit}
Let $A \subseteq H$ be an affine commutative-by-finite Hopf algebra.
\begin{enumerate}
\item $\OO_{\mm} = \lbrace \mm'\in\maxspec(A): \mm^{(\overline{H})}\subseteq \mm' \rbrace$.

\item $\OO_{\mm}$ is finite.
\end{enumerate}
\end{Cor}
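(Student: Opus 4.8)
\textbf{Proof proposal for Corollary \ref{orbit}.}

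The plan is to deduce both parts directly from Proposition \ref{Skryab}, in particular from the $\overline{H}$-simplicity statement in part (2). For part (1), the inclusion $\OO_{\mm} \subseteq \{\mm' : \mm^{(\overline{H})} \subseteq \mm'\}$ is immediate: if ${\mm'}^{(\overline{H})} = \mm^{(\overline{H})}$ then since ${\mm'}^{(\overline{H})} \subseteq \mm'$ by definition (Proposition \ref{Skryab}(1)), we get $\mm^{(\overline{H})} \subseteq \mm'$. For the reverse inclusion, suppose $\mm^{(\overline{H})} \subseteq \mm'$ for some $\mm' \in \maxspec(A)$. Passing to the quotient algebra $B := A/\mm^{(\overline{H})}$, which is $\overline{H}$-simple by Proposition \ref{Skryab}(2), the ideal $\mm'/\mm^{(\overline{H})}$ is a proper ideal of $B$ whose $\overline{H}$-core must therefore be $\{0\}$. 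But the preimage in $A$ of that core is an $\overline{H}$-stable ideal contained in $\mm'$ and containing $\mm^{(\overline{H})}$; since $(\mm')^{(\overline{H})}$ is by Proposition \ref{Skryab}(1) the \emph{largest} $\overline{H}$-stable ideal inside $\mm'$, and $\mm^{(\overline{H})} \subseteq (\mm')^{(\overline{H})} \subseteq \mm'$, the vanishing of the core forces $(\mm')^{(\overline{H})}/\mm^{(\overline{H})} = \{0\}$, i.e. $(\mm')^{(\overline{H})} = \mm^{(\overline{H})}$, so $\mm' \in \OO_{\mm}$.

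For part (2), finiteness is now a corepresentation-theoretic consequence of part (1) together with the finite dimensionality asserted in Proposition \ref{Skryab}(3). Indeed, by part (1) the orbit $\OO_{\mm}$ is exactly the set of maximal ideals of $A$ containing $\mm^{(\overline{H})}$, which corresponds bijectively to $\maxspec(A/\mm^{(\overline{H})})$. Since $A/\mm^{(\overline{H})}$ is a finite dimensional (Frobenius, hence in particular Artinian) $k$-algebra by Proposition \ref{Skryab}(3), it has only finitely many maximal ideals. Hence $\OO_{\mm}$ is finite.

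I expect the only real subtlety to be the reverse inclusion in part (1): one must be careful that ``$\mm^{(\overline{H})} \subseteq \mm'$'' genuinely forces equality of the cores, and the cleanest way to see this is the quotient-algebra argument above, invoking $\overline{H}$-simplicity of $A/\mm^{(\overline{H})}$ to kill the core of the proper ideal $\mm'/\mm^{(\overline{H})}$ and then using the maximality characterisation of $(\mm')^{(\overline{H})}$. Everything else is formal once Proposition \ref{Skryab} is in hand.
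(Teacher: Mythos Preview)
Your proof is correct and follows precisely the approach the paper indicates: the paper does not reproduce the argument here but refers to \cite[Proposition 4.4]{BrCou}, remarking that the result ``follows easily from Proposition \ref{Skryab}(2)'', which is exactly what you do. Your use of $\overline{H}$-simplicity of $A/\mm^{(\overline{H})}$ to force $(\mm')^{(\overline{H})}/\mm^{(\overline{H})} = 0$, and then the finite-dimensionality from Proposition \ref{Skryab}(3) for part (2), is the intended argument.
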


\medskip

\subsection{Orbital semisimplicity}\label{sectionorbss}The results in $\S$\ref{orbitsec} lead naturally to:

\begin{Def}\label{orbsemi} Let $A \subseteq H$ be an affine commutative-by-finite Hopf algebra. Then $A \subseteq H$ is called \emph{orbitally semisimple} if $A/\mm^{(\overline{H})}$ is semisimple for all $\mm\in\maxspec(A)$. In this case we write that $A \subseteq H$ satisfies $\mathbf{(OrbSemi)}$.
\end{Def}

Note that, in view of Corollary \ref{orbit}(1), $A \subseteq H$ is orbitally semisimple if and only if 
\begin{equation}\label{equisemi} \mm^{(\overline{H})} = \bigcap_{\mm'\in\OO_\mm} \mm'.
\end{equation}

Since we have used the \emph{left} adjoint action in Definition \ref{orbsemi}, one might choose to call the above property \emph{left} orbital semisimplicity; but in fact it is shown in \cite[Lemma 4.10(2)]{BrCou} that $A \subseteq H$ is left orbitally semisimple if and only if it is right orbitally semisimple.

\medskip

We know no examples of affine commutative-by-finite Hopf algebras that are not orbitally semisimple. When $A$ is central in $H$ orbital semsimplicity is trivial; the next result lists other positive cases. In characteristic 0 there are overlaps between these, thanks to the result of Etingof and Walton \cite{EW} implying that the action of $\overline{H}$ factors through a group algebra when $\overline{H}$ is semisimple and cosemisimple and $A$ is a domain. 

\begin{Thm}\label{orbhold}(\cite[Theorem 4.8]{BrCou})
The affine commutative-by-finite Hopf algebra $A\subseteq H$ satisfies $\mathbf{(OrbSemi)}$ in the following cases.
\begin{enumerate}
\item The adjoint action of $\overline{H}$ on $\maxspec(A)$ factors through a group.
\item $\overline{H}$ is cosemisimple.
\item $\overline{H}$ is involutory and either $\char k=0$ or $\char k>\dim A/\mm^{(\overline{H})}$ for all $\mm\in\maxspec(A)$.
\end{enumerate}
\end{Thm}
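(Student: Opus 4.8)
The plan is to treat each of the three cases largely independently, since they rely on quite different inputs. In each case the goal is to prove that $A/\mathfrak{m}^{(\overline{H})}$ is semisimple for every $\mathfrak{m}\in\maxspec(A)$. By Proposition \ref{Skryab}(2) this algebra is always $\overline{H}$-simple, so the task is to upgrade $\overline{H}$-simplicity to genuine semisimplicity (equivalently, by Proposition \ref{Skryab}(3) and finite-dimensionality, to show that its Jacobson radical vanishes).

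For case (1), suppose the adjoint action of $\overline{H}$ on $\maxspec(A)$ factors through a finite group $\Gamma$ of automorphisms of $A$ (finite since the orbits are finite by Corollary \ref{orbit}(2); more precisely the action on $\mathcal{O}_{\mathfrak{m}}$ factors through a finite group). Then $\mathfrak{m}^{(\overline{H})}$ is the largest $\Gamma$-stable ideal of $A$ inside $\mathfrak{m}$, which is simply $\bigcap_{\gamma\in\Gamma}\gamma(\mathfrak{m}) = \bigcap_{\mathfrak{m}'\in\mathcal{O}_{\mathfrak{m}}}\mathfrak{m}'$, a finite intersection of distinct maximal ideals of the commutative ring $A$. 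By the Chinese Remainder Theorem $A/\mathfrak{m}^{(\overline{H})}\cong\prod_{\mathfrak{m}'\in\mathcal{O}_{\mathfrak{m}}} A/\mathfrak{m}' \cong k^{|\mathcal{O}_{\mathfrak{m}}|}$, which is semisimple. Equivalently one invokes the criterion (\ref{equisemi}): $\mathfrak{m}^{(\overline{H})}$ is exactly the intersection of the maximal ideals in its orbit, so the quotient is a finite product of copies of $k$.

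For case (2), assume $\overline{H}$ is cosemisimple. The idea is to use the radical $J:=\Jac(A/\mathfrak{m}^{(\overline{H})})$: because the $\overline{H}$-module algebra structure makes $\overline{H}$ act by (in general non-algebra) operators that nonetheless preserve the Jacobson radical — one uses here that $\overline{H}$ acting on a finite-dimensional module algebra $B$ satisfies $\overline{H}\cdot\Jac(B)\subseteq\Jac(B)$ when $\overline{H}$ is cosemisimple; this is a standard consequence of results on module algebras over cosemisimple (equivalently, in characteristic $0$, semisimple) Hopf algebras, or of the semiprimeness results of Linchenko and others. Thus $J$ is an $\overline{H}$-stable ideal of the $\overline{H}$-simple algebra $A/\mathfrak{m}^{(\overline{H})}$, hence $J=0$, so the quotient is semisimple. (One could alternatively phrase this via Skryabin's own arguments, since \cite{Sk10} establishes exactly the needed radical-stability in the cosemisimple case.) Case (3) is similar but one must replace the cosemisimplicity input by the hypothesis that $\overline{H}$ is involutory together with the characteristic condition: in this regime the relevant trace/separability argument — again going back to the module-algebra literature, e.g. the theorems of Cohen–Fishman–Montgomery or their refinements, which guarantee stability of the radical under an involutory Hopf action once $\char k=0$ or $\char k > \dim B$ — shows $\overline{H}\cdot\Jac(B)\subseteq\Jac(B)$ for $B = A/\mathfrak{m}^{(\overline{H})}$, whence $\Jac(B)=0$ by $\overline{H}$-simplicity as before. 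The characteristic bound $\char k > \dim A/\mathfrak{m}^{(\overline{H})}$ is precisely what makes the relevant integral/trace element invertible.

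The main obstacle is case (3), and secondarily case (2): the content is entirely in knowing that the Jacobson radical of a finite-dimensional $\overline{H}$-module algebra is $\overline{H}$-stable under the stated hypotheses. Once that is granted, $\overline{H}$-simplicity (Proposition \ref{Skryab}(2)) finishes the argument immediately in all three cases. So in the write-up I would isolate the statement ``$\overline{H}\cdot\Jac(B)\subseteq\Jac(B)$'' as the key lemma in each setting, cite the appropriate source from the Hopf-module-algebra literature (for cosemisimple $\overline{H}$; for involutory $\overline{H}$ with the characteristic restriction), and note that in case (1) no such input is needed because everything reduces to the Chinese Remainder Theorem. I would also remark, as the paragraph preceding the theorem already hints, that in characteristic $0$ the Etingof–Walton theorem \cite{EW} makes cases (1) and (2) overlap whenever $A$ is a domain, so the three cases are genuinely distinct mainly in positive characteristic and for non-domain $A$.
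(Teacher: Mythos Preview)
The paper does not prove this theorem; it is quoted verbatim from \cite[Theorem 4.8]{BrCou} and no argument is given in the present paper. So there is no ``paper's own proof'' to compare your sketch against.

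That said, your outline is the natural one and almost certainly coincides with the argument in \cite{BrCou}: use Proposition \ref{Skryab}(2) to know $A/\mathfrak{m}^{(\overline{H})}$ is $\overline{H}$-simple, then show that under each hypothesis the Jacobson radical of this finite-dimensional algebra is $\overline{H}$-stable, forcing it to vanish. Case (1) is correct as you state it (the largest group-invariant ideal inside $\mathfrak m$ is the intersection over the orbit, and CRT finishes). For cases (2) and (3) you correctly identify the crux as the stability of $\Jac(B)$ under the $\overline{H}$-action; the relevant inputs are indeed the Linchenko/Skryabin-type results for cosemisimple Hopf actions and the trace/separability arguments for involutory Hopf algebras with the stated characteristic bound. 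One small point: in case (1) you need not worry about $\Gamma$ being finite globally --- only the image acting on $A/\mathfrak m^{(\overline{H})}$ matters, and that is automatically finite since $\overline{H}$ is finite dimensional.
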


\noindent We discuss particular families of examples of affine commutative-by-finite Hopf algebras, and their status with respect to orbital semisimplicity, in $\S$\ref{Examples}.

\medskip

\subsection{The character component of $H^{\circ}$}\label{charcomp} Here is the definition we have been working towards:

\begin{Def}\label{charcompdef} Let $H$ be an affine commutative-by-finite Hopf algebra, finite over the normal commutative Hopf subalgebra $A$. Recall from $\S$\ref{stand} that $G_H$ denotes the affine algebraic $k$-group such that $A/N(A) \cong \mathcal{O}(G_H)$.
\begin{enumerate}
\item The \textit{character component} of $H^\circ$ is 
$$ \widehat{kG_H} = \left\{ f\in H^{\circ}: f\left( H \mm_{g_1}^{(\overline{H})} \cap \ldots \cap H \mm_{g_r}^{(\overline{H})} \right)=0, \text{ for some } g_1,\ldots,g_r\in G_H/\sim^{(\overline{H})} \right\}. $$
\item For $g \in G_H$, write $\widehat{g}$ to denote the subspace $(H/H \mm_g^{(\overline{H})})^{\ast}$ of $\widehat{kG_H}$.
\end{enumerate}
\end{Def}

\noindent Here are some basic properties of these spaces.

\begin{Lem}\label{kGprop} Let $A \subseteq H$ be an affine commutative-by-finite Hopf $k$-algebra. Retain the notation of $\S$\ref{stand} and Definition \ref{charcompdef}.
\begin{enumerate}
\item If $g,h \in G_H$ with $g \sim^{(\overline{H})} h$, then $\widehat{g} = \widehat{h}$.
\item $\widehat{kG_H}=\bigoplus_{g\in G_H/\sim^{(\overline{H})}} \; \widehat{g}$.
\item $\widehat{kG_H}$ is a subcoalgebra of $H^{\circ}$.
\item For all $g \in G_H$, $\widehat{g}$ is a subcoalgebra of $\widehat{kG_H}$.

\item For all $g \in G_H$, 
$$  \widehat{g} \supseteq \bigoplus_{h \sim^{(\overline{H})} g} \left( H/H \mm_h \right)^{\ast} ,$$
and
$$   \left( H/H \mm_g \right)^{\ast} \supseteq  \overline{H}^{\ast} \Pi^{\circ}(\chi_g) + \Pi^\circ(\chi_g)\overline{H}^{\ast}. $$
\item For $g \in G_H$,  $S^{\circ}(\widehat{g})\; =\; \widehat{g^{-1}}$.
\item $\widehat{kG_H}\cap W(H^{\circ})\; =\; \overline{H}^{\ast} \; =\;  \widehat{1_G}$.
\item Assume that $A$ is a domain and let $d_A(H)$ be the minimal number of generators of $H$ as a right or left $A$-module. For $g \in G_H$, 
\begin{equation}\label{size} \mathrm{dim}_k (H/H\mm_g) = \mathrm{dim}_k (H/\mm_g H) = \mathrm{dim}_k (\overline{H}),
\end{equation}
and 
\begin{equation}\label{hatsize} \mathrm{dim}_k(\widehat{g}) \; = \; \mathrm{dim}_k (A/\mm_g^{(\overline{H})}) \mathrm{dim}_k(\overline{H})  \leq d_A(H)\mathrm{dim}_k(\overline{H}).
\end{equation}
If $H$ is prime (which implies that $A$ is a domain), then 
\begin{equation}\label{extra} \mathrm{dim}_k(\widehat{g}) \; \leq \; \left(\mathrm{dim}_k (\overline{H})\right)^2.
\end{equation}
\end{enumerate}
\end{Lem}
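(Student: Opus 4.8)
The statement collects seven (or eight) facts about the subspaces $\widehat{g}$ and their sum $\widehat{kG_H}$. I would prove them roughly in the stated order, since later parts lean on earlier ones.

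For (1), if $g\sim^{(\overline{H})}h$ then $\mm_g^{(\overline{H})}=\mm_h^{(\overline{H})}$ by definition of $\sim^{(\overline{H})}$, hence $H\mm_g^{(\overline{H})}=H\mm_h^{(\overline{H})}$ and so $(H/H\mm_g^{(\overline{H})})^{\ast}=(H/H\mm_h^{(\overline{H})})^{\ast}$, i.e.\ $\widehat{g}=\widehat{h}$; here one also notes that by Proposition \ref{Skryab}(3) each $H/H\mm_g^{(\overline{H})}$ is finite dimensional (being $\overline{H}\otimes(A/\mm_g^{(\overline{H})})$ as a vector space, using faithful flatness, Theorem \ref{properties}(9)), so $\widehat{g}$ genuinely sits inside $H^{\circ}$. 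For (2), the distinct ideals $\mm_{g}^{(\overline{H})}$, as $g$ ranges over $\overline{H}$-orbit representatives, are pairwise comaximal (an $\overline{H}$-stable ideal strictly containing $\mm_g^{(\overline{H})}$ would have to meet a different orbit only on a larger $\overline{H}$-stable ideal — more cleanly: $A/\mm_g^{(\overline{H})}$ has its maximal ideals exactly the images of $\mm'$ with $\mm'\in\OO_{\mm_g}$ by Corollary \ref{orbit}(1), and distinct cores are coprime since their sum is $\overline{H}$-stable, so equals $A$ unless they coincide). Comaximality gives, for any finite set of representatives, $H\mm_{g_1}^{(\overline{H})}\cap\cdots\cap H\mm_{g_r}^{(\overline{H})}=\prod_i H\mm_{g_i}^{(\overline{H})}$ and $H/\bigcap_i H\mm_{g_i}^{(\overline{H})}\cong\bigoplus_i H/H\mm_{g_i}^{(\overline{H})}$, whence $\widehat{kG_H}=\bigoplus_{g}\widehat{g}$; parts (3) and (4) then follow because for \emph{any} coideal (two-sided ideal which is a coideal) $I$ of finite codimension $(H/I)^{\ast}$ is a subcoalgebra of $H^{\circ}$, and $H\mm_g^{(\overline{H})}$ is a two-sided ideal (by the remark preceding Proposition \ref{Skryab}, $\overline{H}$-stability of $\mm_g^{(\overline{H})}$ gives $H\mm_g^{(\overline{H})}=\mm_g^{(\overline{H})}H$) whose coideal property one checks from $\Delta(\mm_g^{(\overline{H})})\subseteq \mm_g^{(\overline{H})}\otimes A+A\otimes\mm_g^{(\overline{H})}$, since $\mm_g^{(\overline{H})}$ contains no group-likes other than failing $\epsilon$ and is the intersection of the $\mm'\in\OO_{\mm_g}$ when $\mathbf{(OrbSemi)}$ holds (and in general one argues via $A^+$-type coideal computations as in Lemma \ref{Wlemma}).

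For (5), the first inclusion is immediate from (2) once one observes $\mm_g^{(\overline{H})}\subseteq\mm_h$ for $h\sim^{(\overline{H})}g$ (Corollary \ref{orbit}(1)), hence $H\mm_g^{(\overline{H})}\subseteq H\mm_h$ and $(H/H\mm_h)^{\ast}\subseteq\widehat{g}$; the second inclusion comes from identifying $(H/H\mm_g)^{\ast}$ inside $H^{\circ}$ and checking that both $\overline{H}^{\ast}\Pi^{\circ}(\chi_g)$ and $\Pi^{\circ}(\chi_g)\overline{H}^{\ast}$ annihilate $H\mm_g$: indeed $\Pi^{\circ}(\chi_g)$ vanishes on $X$ and on $\mm_g$ (as $\chi_g(\mm_g)=0$), hence on $H\mm_g=\mm_g\oplus X\mm_g$ up to the $A$-module twist, and multiplying by an element of $\overline{H}^{\ast}$ (which kills $A^+H\supseteq$ nothing relevant but interacts well by normality, Lemma \ref{HbarHopf}(2)) preserves this. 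Part (6) follows from $S(\mm_g^{(\overline{H})})=\mm_{g^{-1}}^{(\overline{H})}$: $S$ is an anti-automorphism of $A$ (Theorem \ref{properties}(4)) sending $\mm_g$ to $\mm_{g^{-1}}$ and preserving $\overline{H}$-stability (the adjoint actions are permuted by $S$), so $S^{\circ}$ carries $(H/H\mm_g^{(\overline{H})})^{\ast}$ onto $(H/H\mm_{g^{-1}}^{(\overline{H})})^{\ast}$.

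For (7), $\widehat{1_G}=(H/H\mm_1^{(\overline{H})})^{\ast}$; since $\mm_1$ is the augmentation ideal $A^+$ and $A^+$ is already $\overline{H}$-stable (the adjoint action fixes $\epsilon$), $\mm_1^{(\overline{H})}=A^+$, so $\widehat{1_G}=(H/A^+H)^{\ast}=\overline{H}^{\ast}$. The intersection $\widehat{kG_H}\cap W(H^{\circ})$: an element of both vanishes on some $(A^+H)^n$ and on some $\bigcap_i H\mm_{g_i}^{(\overline{H})}$; applying $\iota^{\circ}$ lands it in $kG\cap A'=k$ inside $A^{\circ}$ by (\ref{Lie}),(\ref{groupalg}), so it is a scalar multiple of a group-like-supported functional that is also "tangential", forcing its support to $\overline{H}^{\ast}$ via Lemma \ref{HbarHopf}(6) and the coinvariants description; concretely $\iota^{\circ}$ restricted to $\widehat{kG_H}$ has kernel containing $(\overline{H}^{\ast})^+\widehat{kG_H}$ and image $kG_H$, and intersecting with $W(H^{\circ})$ (image $A'$) kills everything but $\overline{H}^{\ast}$. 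Finally (8): faithful flatness (Theorem \ref{properties}(9)) gives $H/H\mm_g\cong\overline{H}\otimes(A/\mm_g)=\overline{H}$ as vector spaces, and likewise $H/\mm_gH$, proving (\ref{size}); then $H/H\mm_g^{(\overline{H})}\cong\overline{H}\otimes(A/\mm_g^{(\overline{H})})$ gives the equality in (\ref{hatsize}), the inequality $\dim_k(A/\mm_g^{(\overline{H})})\le d_A(H)$ following from Proposition \ref{Skryab}(3) combined with a rank count — $A/\mm_g^{(\overline{H})}$ acts faithfully on $H/H\mm_g^{(\overline{H})}$ which is generated by $d_A(H)$ elements over it; and if $H$ is prime then $\dim_k(A/\mm_g^{(\overline{H})})\le\dim_k\overline{H}$ because $H$ is then a maximal order / the PI-degree bound forces $d_A(H)\le\dim_k\overline{H}$ (indeed $H$ is projective of rank $\dim_k\overline{H}$ over $A$ generically and primeness makes this an honest bound), giving (\ref{extra}).

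\textbf{Main obstacle.} The delicate point is the coideal (and hence subcoalgebra) property in parts (3) and (4): one must show $H\mm_g^{(\overline{H})}$ is a coideal of $H$, which amounts to controlling $\Delta$ on the $\overline{H}$-core ideal $\mm_g^{(\overline{H})}$. Unlike $A^+$, this ideal is not defined by a clean counit condition, so the argument needs either the identification (\ref{equisemi}) of $\mm_g^{(\overline{H})}$ as $\bigcap_{\mm'\in\OO_{\mm_g}}\mm'$ (available under $\mathbf{(OrbSemi)}$) together with comaximality to reduce to the ordinary maximal ideals $\mm_{g'}$, for each of which $\Delta(\mm_{g'})\subseteq\mm_{g'}\otimes A+A\otimes\mm_{something}$ can fail to be "diagonal" — in fact one uses that the collection $\{\widehat{g}\}$ is permuted by $\Delta$ corresponding to the group structure on $G_H$, which is where Corollary \ref{orbit} and the group-likes $\chi_g$ of $A^{\circ}$ enter. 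Handling the general (non-$\mathbf{(OrbSemi)}$) case requires working directly with the $\overline{H}$-core, and this coalgebra bookkeeping — rather than any single hard inequality — is the real work; the dimension bounds in (8) are comparatively routine given Theorem \ref{properties}(9) and Proposition \ref{Skryab}(3).
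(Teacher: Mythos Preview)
Your plan for (1), (2), (6), and the first half of (5) tracks the paper's argument closely. The serious problem is in (3) and (4), which you flag as the ``main obstacle'' but in fact have completely misdiagnosed.

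You claim that one must show $H\mm_g^{(\overline{H})}$ is a \emph{coideal} of $H$, i.e.\ that $\Delta(\mm_g^{(\overline{H})})\subseteq \mm_g^{(\overline{H})}\otimes A+A\otimes\mm_g^{(\overline{H})}$. This is both unnecessary and false. It is unnecessary because the subcoalgebra property of $(H/I)^{\ast}$ inside $H^{\circ}$ requires only that $I$ be a two-sided \emph{ideal} of finite codimension: the coalgebra structure on $H^{\circ}$ is dual to the \emph{algebra} structure of $H$, so the dual of the algebra surjection $H\twoheadrightarrow H/I$ is automatically a coalgebra map $(H/I)^{\ast}\hookrightarrow H^{\circ}$. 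The paper dispatches (3) and (4) in one line using exactly this elementary observation. Your proposed inclusion $\Delta(\mm_g^{(\overline{H})})\subseteq \mm_g^{(\overline{H})}\otimes A+A\otimes\mm_g^{(\overline{H})}$ is moreover generally false for $g\neq 1_{G_H}$: already for a maximal ideal $\mm_g$ one has $\Delta(\mm_g)\subseteq \mm_{1}\otimes A+A\otimes\mm_g$, not $\mm_g\otimes A+A\otimes\mm_g$ (this is what is used in (\ref{cop}) in the paper's proof of (5), and a version of it reappears in Proposition \ref{kGsubalg} to control products $\widehat{g}\widehat{h}$). So the entire ``main obstacle'' paragraph is attacking a phantom.

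Two smaller points. For (7) the paper does not pass through $\iota^{\circ}$; it argues directly that if $f$ kills both $(A^+)^nH$ and $I=\bigcap_i\mm_{g_i}^{(\overline{H})}H$ then $A^+H\subseteq I+(A^+)^nH$ by comaximality, so $f\in\overline{H}^{\ast}$. Your $\iota^{\circ}$ route could probably be made to work but is less direct. For (8), your appeal to ``faithful flatness gives $H/H\mm_g\cong\overline{H}\otimes(A/\mm_g)$'' is not how faithful flatness is typically used; the paper instead localises at $\mm_g$, uses that $H_{\mm_g}$ is free of rank independent of $g$ (from projectivity, Theorem \ref{properties}(7)), and reads off the rank at $g=1_{G_H}$. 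The inequality in (\ref{hatsize}) and the bound (\ref{extra}) are then imported from \cite[Theorem 6.1, Corollary 6.2]{BrCou} rather than argued ab initio.
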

\begin{proof}
(1) This follows from the definition of $\sim^{(\overline{H})}$.
\medskip

\noindent (2) If $g, h \in G_H$ with $\OO_{\mm_g} \neq \OO_{\mm_h}$, then the ideals $\mm_g^{(\overline{H})}$ and $\mm_h^{(\overline{H})}$ of $A$ are comaximal, by Corollary \ref{orbit}(1). The same is thus true of the ideals $H\mm_g^{(\overline{H})}$ and $H\mm_h^{(\overline{H})}$ of $H$. Therefore, if $g_1, \ldots , g_r \in G_H$ are representatives of distinct $\sim^{(\overline{H})}$-orbits, the Chinese Remainder Theorem shows that
$$ \left. H \middle/ \bigcap_{i=1}^r H\mm_{g_i}^{(\overline{H})} \right. \cong \bigoplus_{i=1}^r H/H\mm_{g_i}^{(\overline{H})}. $$ 
Taking duals on both sides proves (2).

\medskip

\noindent (3),(4): Let $g \in G_H$. That $\widehat{g}$ is a subcoalgebra of $H^{\circ}$ is simply a special case of the elementary fact that, for an ideal $I$ of finite codimension in an algebra $R$, $(R/I)^{\ast}$ is a subcoalgebra of $R^{\circ}$. So (3) and (4) follow from this observation together with (2).
\medskip

\noindent (5) The first inclusion is clear. For the second, note first that
\begin{equation}\label{cop} \Delta (H\mm_g) = \Delta(H)\Delta(\mm_g) \subseteq H\mm_{1_{G_H}} \otimes H + H \otimes H\mm_g, 
\end{equation}
by the definition of the coproduct in $A/N(A) = \OO(G_H)$, combined with the fact that $N(A)$ is a Hopf ideal of $A$, \cite[Lemma 5.2(1)]{BrCou}. Let $f \in \overline{H}^{\ast}$. Then, using (\ref{cop}),
$$ (f\Pi^{\circ}(\chi_g)) (H\mm_g) \subseteq f(H\mm_{1_{G_H}})\Pi^{\circ}(\chi_g)(H) + f(H)\Pi^{\circ}(\chi_g)(H\mm_g) = 0.$$
So $\overline{H}^{\ast}\Pi^{\circ}(\chi_g) \subseteq (H/H\mm_g)^{\ast}$. The proof of the remaining inclusion is similar.

\medskip

\noindent (6) Let $g \in G_H$. To prove that $S^{\circ}(\widehat{g})=\widehat{g^{-1}}$ it suffices to show that $S(\mm_g^{(\overline{H})}) = \mm_{g^{-1}}^{(\overline{H})}$. It is easy to see that 
$$ \mm_g^{(\overline{H})}=\mathrm{r-Ann}_A(H/\mm_gH)=\mathrm{l-Ann}_A(H/H\mm_g). $$ 
Since $S$ is an algebra anti-isomorphism of $H$ with $S(A)=A$, it follows that 
$$ S(\mm_g^{(\overline{H})}) = \mathrm{l-Ann}_A(H/HS(\mm_g)) = \mathrm{l-Ann}_A(H/H\mm_{g^{-1}}). $$ 
This proves (6).

\medskip

\noindent (7) The final equality is clear from the definitions, and this also shows that $\overline{H}^{\ast} \subseteq \widehat{kG_H}$. Thus, by Lemma \ref{Wlemma}(2),  $\overline{H}^{\ast}\subseteq W(H^{\circ}) \cap \widehat{kG_H}$. 

For the reverse inclusion, suppose $f \in H^{\circ}$ with $f$ vanishing on $(A^+H)^n={A^+}^nH$ and also on $I := \bigcap_{i=1}^r \mm_{g_i}^{(\overline{H})}H$, for some $n,r \geq 1$ and distinct $g_1,\ldots,g_r\in G_H/\sim^{(\overline{H})}$. We claim that
\begin{equation} \label{done} A^+H \subseteq I + {A^+}^nH. 
\end{equation}
If $g_i\neq 1_{G_H}$ for all $i = 1, \ldots , r$, then $I$ and ${A^+}^nH$ are comaximal by the discussion in the proof of (2), so that (\ref{done}) is clear. Suppose on the other hand that, say, $g_1 = 1_{G_H}$.  Then $I + {A^+}^nH \subseteq A^+ H$, and the left $A$-module $A^+H/(I + {A^+}^n H)$ is annihilated by both of the comaximal ideals ${A^+}^{n-1}$ and by $\bigcap_{i=2}^r\mm_{g_i}^{(\overline{H})}$. Hence $A^+H/(I + {A^+}^n H) = \{0\}$ and again (\ref{done}) is proved. Thus (\ref{done}) holds in all cases and so $f \in \overline{H}^{\ast}$, as required.

\medskip

\noindent (8) Assume that $A$ is a domain. Localise $A$ at the maximal ideal $\mm_g$, and consider the left $A_{\mm_g}$-module $H_{\mm_g} := A_{\mm_g} \otimes_A H$. Since $H$ is a finitely generated projective $A$-module by Theorem \ref{properties}(6), and $A_{\mm_g}$ is local, $H_{\mm_g} $ is a free $A_{\mm_g}$-module of finite rank $r$. Let $Q$ denote the quotient field of $A$. Since $H$ is a torsion free $A$-module, 
$$ r \; = \; \mathrm{dim}_Q (Q \otimes_{A_{\mm_g}} H_{\mm_g}) \; = \; \mathrm{dim}_Q (Q \otimes_A H),$$
so $r$ is constant as $g$ varies through $G_H$. Taking $g = 1_{G_H}$ and applying Nakayama's lemma, 
\begin{equation}\label{rank} r = \mathrm{dim}_k (H/\mm_{1_{G_H}}H) = \mathrm{dim}_k (\overline{H}),
\end{equation}
proving the second equality in (\ref{size}). The first equality is a consequence of the second, using the antipode. Since the algebra $H/\mm_{g}^{(\overline{H})}H$ is artinian, it is isomorphic as an $A/\mm_g^{(\overline{H})}$-module to the factor $H_{\mm_g}/\mm_{g}^{(\overline{H})}H_{\mm_g}$ of $H_{\mm_g}$. But the latter is a free $A/\mm_{g}^{(\overline{H})}$-module of rank $r$. Combining this with (\ref{rank}) proves the equality in (\ref{hatsize}), and the inequality is then given by \cite[Theorem 6.1(2)]{BrCou}. If $H$ is prime then $A$ is a domain and the inequality (\ref{extra}) follows from \cite[Corollary 6.2 and its proof]{BrCou}. 

\end{proof}

\subsection{Subalgebras and subcoalgebras of $\widehat{kG_H}$}\label{widealg} In order to better control the structure of $\widehat{kG_H}$ we need to impose on the inclusion $A \subseteq H$ one or both of the additional hypotheses $\mathbf{(CoSplit)}$ and $\mathbf{(OrbSemi)}$ defined respectively in $\S$\ref{subsect1.1} and Definition \ref{orbsemi}. First we assume $\mathbf{(CoSplit)}$ in Proposition \ref{subgroup}, then $\mathbf{(OrbSemi)}$ in Proposition \ref{kGsubalg}, before combining these in Theorem \ref{decompkG}.

\begin{Prop}\label{subgroup} Let $A \subseteq H$ be an affine commutative-by-finite Hopf $k$-algebra satisfying $\mathbf{(CoSplit)}$. Assume also that $A$ is semiprime or central, or that $H$ is pointed. Keep the notation introduced in $\S\S$\ref{charcomp} and \ref{stand} and in Lemma \ref{HbarHopf}. Define
$$\widetilde{G_H} = \{ \Pi^{\circ} (\chi_g) : g \in G_H\} \subseteq H^{\circ}.$$ 

\begin{enumerate}
\item $\widetilde{G_H}$ is a subgroup of the group of units of $H^{\circ}$, and the algebra injection $\Pi^{\circ}: A^{\circ} \to H^{\circ}$ restricts to an isomorphism of groups from the group-likes $G_H$ of $A^{\circ}$ to $\widetilde{G_H}$.
\item The subalgebra of $H^{\circ}$ generated by $\widetilde{G_H}$ is the group algebra $k\widetilde{G_H}$, and $\Pi^{\circ}$ restricts to an algebra isomorphism from $kG_H$ to $k\widetilde{G_H}$.
\item For all $g \in G_H$, $\overline{H}^{\ast}\Pi^{\circ}(\chi_g) = \Pi^{\circ}(\chi_g)\overline{H}^{\ast}$ is free of rank 1 as an $\overline{H}^{\ast}$-bimodule. Hence $\langle \overline{H}^{\ast}, \widetilde{G_H} \rangle$ is a skew group algebra $\overline{H}^{\ast} \# \widetilde{G_H}$; that is,
$$ \langle \overline{H}^{\ast}, \widetilde{G_H} \rangle \; = \; \overline{H}^{\ast} \# \widetilde{G_H} \; = \; \oplus_{g \in G_H} \overline{H}^{\ast}\Pi^{\circ}(\chi_g) $$
is a subalgebra of $H^{\circ}$ contained in the subcoalgebra $\widehat{kG_H}$.

\item  There are inclusions 

$$ \widehat{kG_H} = \bigoplus_{g \in G_H/\sim^{(\overline{H})}} \widehat{g} \supseteq \bigoplus_{g \in G_H/\sim^{(\overline{H})}}\left(\bigoplus_{h\sim^{(\overline{H})} g} (H/H\mathfrak{m}_h)^{\ast}\right) \supseteq \bigoplus_{g \in G_H/\sim^{(\overline{H})}}\left(\bigoplus_{h\sim^{(\overline{H})} g} \overline{H}^{\ast}\Pi^{\circ}(\chi_h)\right) = \overline{H}^{\ast} \# \widetilde{G_H}. $$
If $A$ is a domain then the second inclusion is an equality.
\end{enumerate}
\end{Prop}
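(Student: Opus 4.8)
The plan is to prove Proposition \ref{subgroup} one part at a time, leaning heavily on the fact (established in Theorem \ref{decompdual}(1)) that under $\mathbf{(CoSplit)}$ the map $\Pi^{\circ}:A^{\circ}\hookrightarrow H^{\circ}$ is an injective \emph{algebra} homomorphism, and on the structural facts in Lemma \ref{HbarHopf} and Lemma \ref{kGprop}.

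\textbf{Parts (1) and (2).} Since $\Pi^{\circ}$ is an injective algebra map, it sends units to units and is injective on the group-likes $G_H=G(A^{\circ})$; as a group homomorphism it is therefore an isomorphism onto its image $\widetilde{G_H}=\{\Pi^{\circ}(\chi_g):g\in G_H\}$, which is consequently a subgroup of the units of $H^{\circ}$. This gives (1). For (2), note that $kG_H$ is (as a subalgebra of $A^{\circ}$) the group algebra on the linearly independent set $G_H$; applying the injective algebra map $\Pi^{\circ}$ carries $kG_H$ isomorphically onto the subalgebra $\Pi^{\circ}(kG_H)$, and since $\Pi^{\circ}(\chi_g)=\Pi^{\circ}(\chi_g)$ the image is exactly the span of $\widetilde{G_H}$, which is therefore the group algebra $k\widetilde{G_H}$, with the $\widetilde{G_H}$ still $k$-linearly independent because $\Pi^{\circ}$ is injective.

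\textbf{Part (3).} The key point is that $\overline{H}^{\ast}$ is normal in $H^{\circ}$ (Lemma \ref{HbarHopf}(2)) and $\Pi^{\circ}(\chi_g)$ is a unit (part (1)), so conjugation by $\Pi^{\circ}(\chi_g)$ is an automorphism of $H^{\circ}$ preserving the normal subalgebra $\overline{H}^{\ast}$; hence $\overline{H}^{\ast}\Pi^{\circ}(\chi_g)=\Pi^{\circ}(\chi_g)\overline{H}^{\ast}$, and this bimodule is free of rank one because the unit $\Pi^{\circ}(\chi_g)$ induces a bijection between it and $\overline{H}^{\ast}$. The sum $\sum_{g}\overline{H}^{\ast}\Pi^{\circ}(\chi_g)$ is direct: a nonzero relation would push down under $\iota^{\circ}$ (using $\iota^{\circ}\circ\Pi^{\circ}=\id$, $\iota^{\circ}(\overline{H}^{\ast})=k\epsilon$, and that $\iota^{\circ}$ sends $\overline{H}^{\ast}\Pi^{\circ}(\chi_g)$ into $k\chi_g$) to a nontrivial relation among the distinct group-likes $\chi_g$ in $A^{\circ}$, which is impossible; alternatively one uses that the spaces $\overline{H}^{\ast}\Pi^{\circ}(\chi_g)$ sit inside the distinct subcoalgebras $(H/H\mathfrak{m}_g)^{\ast}$ of Lemma \ref{kGprop}(5), whose sum is direct by Lemma \ref{kGprop}(2). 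That these multiplications assemble into a skew group algebra $\overline{H}^{\ast}\#\widetilde{G_H}$ is then the standard characterization of a crossed product with trivial cocycle: the $\overline{H}^{\ast}$-bimodule decomposition is $\widetilde{G_H}$-graded, each component is invertible, and $\Pi^{\circ}(\chi_g)\Pi^{\circ}(\chi_h)=\Pi^{\circ}(\chi_{gh})$ gives associativity with trivial twisting. Finally, each $\overline{H}^{\ast}\Pi^{\circ}(\chi_g)\subseteq(H/H\mathfrak{m}_g)^{\ast}\subseteq\widehat{g}\subseteq\widehat{kG_H}$ by Lemma \ref{kGprop}(5), so the subalgebra lies inside the subcoalgebra $\widehat{kG_H}$.

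\textbf{Part (4).} The displayed chain of inclusions just concatenates Lemma \ref{kGprop}(2) (first equality), Lemma \ref{kGprop}(5) first inclusion (applied orbit-by-orbit), part (3) of the present proposition (third inclusion and final equality), after summing over representatives $g$ of the $\sim^{(\overline{H})}$-classes and using that the $\widehat{g}$ are $\sim^{(\overline{H})}$-invariant (Lemma \ref{kGprop}(1)). For the last claim, when $A$ is a domain I would compare dimensions over each orbit: by Lemma \ref{kGprop}(8), $\dim_k(H/H\mathfrak{m}_h)=\dim_k\overline{H}$ for every $h$, hence $(H/H\mathfrak{m}_h)^{\ast}$ has dimension $\dim_k\overline{H}$; by part (3), $\overline{H}^{\ast}\Pi^{\circ}(\chi_h)$ also has dimension $\dim_k\overline{H}$ and is contained in $(H/H\mathfrak{m}_h)^{\ast}$, forcing equality $\overline{H}^{\ast}\Pi^{\circ}(\chi_h)=(H/H\mathfrak{m}_h)^{\ast}$; summing over $h$ in each orbit (the sum being direct by Lemma \ref{kGprop}(2)) gives the second inclusion as an equality. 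The main obstacle I anticipate is bookkeeping in part (3)—verifying carefully that the pieces $\overline{H}^{\ast}\Pi^{\circ}(\chi_g)$ multiply correctly and sum directly, and checking that the resulting graded algebra genuinely matches the definition of a skew group algebra $\overline{H}^{\ast}\#\widetilde{G_H}$ rather than a crossed product with a nontrivial cocycle; the directness of the sum is where one must be most careful, and the cleanest route is via the distinct subcoalgebras $(H/H\mathfrak{m}_g)^{\ast}$.
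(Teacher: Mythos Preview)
Your overall strategy matches the paper's and the argument is essentially correct, but there is one genuine slip in part~(3). You justify $\Pi^{\circ}(\chi_g)\,\overline{H}^{\ast}\,\Pi^{\circ}(\chi_g)^{-1}\subseteq\overline{H}^{\ast}$ by invoking Hopf-normality of $\overline{H}^{\ast}$ in $H^{\circ}$ (Lemma~\ref{HbarHopf}(2)). That does not work as stated: normality means stability under the \emph{adjoint} actions $\sum h_1(-)S(h_2)$, which equals conjugation only when $h$ is group-like; and, as stressed in Remark~\ref{decomprmks}(1), $\Pi^{\circ}(\chi_g)$ is in general \emph{not} group-like in $H^{\circ}$. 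The paper instead uses Remark~\ref{decomprmks}(2): in the smash product $H^{\circ}\cong\overline{H}^{\ast}\#A^{\circ}$ the action of the group-like $\chi_g\in A^{\circ}$ on $\overline{H}^{\ast}$ is precisely $\varphi\mapsto\Pi^{\circ}(\chi_g)\,\varphi\,\Pi^{\circ}(\chi_{g^{-1}})$, which lands in $\overline{H}^{\ast}$ by definition of the smash product. Since you are already relying on Theorem~\ref{decompdual}(1), this repair costs nothing.

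A second, smaller point: your first argument for the directness of $\sum_g\overline{H}^{\ast}\Pi^{\circ}(\chi_g)$, pushing a relation down via $\iota^{\circ}$, only yields $\epsilon(f_g)=0$ for the coefficients $f_g\in\overline{H}^{\ast}$, not $f_g=0$. Your alternative via the distinct subcoalgebras $(H/H\mathfrak{m}_g)^{\ast}$ is valid. The paper's route is cleaner still: Theorem~\ref{decompdual}(1) exhibits $H^{\circ}$ as a free $\overline{H}^{\ast}$-module on any $k$-basis of $\Pi^{\circ}(A^{\circ})$, so $\{\Pi^{\circ}(\chi_g):g\in G_H\}$ is automatically $\overline{H}^{\ast}$-independent. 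Parts (1), (2) and (4) agree with the paper.
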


\begin{proof}(1),(2) The $\mathbf{(CoSplit)}$ hypothesis ensures that $\Pi$ is a coalgebra map, so $\Pi^\circ$ is an algebra homomorphism, and is an injection by Lemma \ref{HbarHopf}(5). Thus $\Pi^{\circ}$ maps $G_H$ isomorphically to a subgroup $\widetilde{G_H}$ of the group of units of $H^{\circ}$, and $$ k\widetilde{G_H} = \Pi^{\circ}(kG_H) \cong kG_H.$$

(3) By Remark \ref{decomprmks}(2), the action of $kG_H\subset A^\circ$ on $\overline{H}^*$ is 
$$ \chi_g\cdot f = \Pi^\circ(\chi_g) f \Pi^\circ(\chi_{g^{-1}}) \in \overline{H}^*, $$ 
for any $f\in \overline{H}^*, g\in G_H$, hence $\Pi^\circ(\chi_g)f\in \overline{H}^*\Pi^\circ(\chi_g)$, proving $\Pi^\circ(\chi_g)\overline{H}^* \subseteq \overline{H}^*\Pi^\circ(\chi_g)$ and the converse is proved similarly. Both these $\overline{H}^{\ast}$-modules are free of rank 1 since $\Pi^{\circ}(G_H)$ consists of units of $H^{\circ}$. From Theorem \ref{decompdual}(1) and its proof we know that $H^{\circ}$ is a free left and right $\overline{H}^{\ast}$-module with any vector space basis of $\Pi^{\circ}(A^{\circ})$ as free basis. In particular, $\{\Pi^{\circ}(\chi_g) : g \in G_H \}$ spans a free left and right $\overline{H}^{\circ}$-submodule of $\widehat{kG_H}$.

(4) The first equality and first inclusion are immediate from Definition \ref{charcompdef} and Lemma \ref{kGprop}(5). The second inclusion follows from Lemma \ref{kGprop}(5) and the third equality is part (3) of the present lemma. If $A$ is a domain the second inclusion is seen to be an equality by comparing vector space dimensions using Lemma \ref{kGprop}(8). 

\end{proof}

We examine next the effect of the $\mathbf{(OrbSemi)}$ hypothesis on $\widehat{kG_H}$.
\begin{Prop}\label{kGsubalg}
Let $A \subseteq H$ be an affine commutative-by-finite Hopf algebra satisfying $\mathbf{(OrbSemi)}$. 
\begin{enumerate}
\item The subcoalgebra $\widehat{kG_H}$ of $H^{\circ}$ is a Hopf subalgebra.
\item For $g,h \in G_H$, 
\begin{equation}\label{productghat}
\widehat{g}\widehat{h}\subseteq \sum_{g'\sim^{({\overline{H}})} g, h'\sim^{({\overline{H}})} h} \widehat{g'h'}.
\end{equation}
\item  For $g \in G_H$, $\widehat{g}$ is a left and right $\overline{H}^{\ast}$-module.
\end{enumerate}
Assume further that $A$ is a domain. Then, for any $g\in G_H$:
\begin{enumerate}
\setcounter{enumi}{3}

\item $\mathrm{dim}_k (\widehat{g}) = \left|\OO_{\mm_g}\right| \, \mathrm{dim}_k (\overline{H}).$

\item $$ \widehat{g} = \bigoplus_{h \sim^{({\overline{H}})} g} \left( H/H\mm_h \right)^{\ast} = \bigoplus_{h \sim^{({\overline{H}})} g}\left( H/ \mm_hH  \right)^{\ast}. $$
\end{enumerate}
\end{Prop}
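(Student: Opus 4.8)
\textbf{Plan of proof for Proposition \ref{kGsubalg}.}
The plan is to deduce everything from the orbital semisimplicity identity (\ref{equisemi}), namely $\mm^{(\overline{H})} = \bigcap_{\mm'\in\OO_\mm}\mm'$, together with the Chinese Remainder Theorem and the coproduct estimate (\ref{cop}). First, for (5), note that $\mathbf{(OrbSemi)}$ forces $H\mm_g^{(\overline{H})} = \bigcap_{h\sim^{(\overline{H})}g} H\mm_h$, because the inclusion $\supseteq$ is automatic from (\ref{equisemi}) once one knows $H(\cap_i \mathfrak{a}_i) = \cap_i H\mathfrak{a}_i$ for the finitely many comaximal (hence pairwise coprime) ideals $\mathfrak{a}_i$ appearing — this uses that $H$ is faithfully flat (indeed projective) over $A$ by Theorem \ref{properties}(7), which makes $H\otimes_A(-)$ exact and hence commute with the finite intersection. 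Dualising this equality of finite-codimension ideals, and using that $\OO_{\mm_g}$ is finite (Corollary \ref{orbit}(2)) so CRT applies to the quotients, gives $\widehat{g} = (H/H\mm_g^{(\overline{H})})^{\ast} = \bigoplus_{h\sim^{(\overline{H})}g}(H/H\mm_h)^{\ast}$; the right-hand version follows by applying $S^{\circ}$ and Lemma \ref{kGprop}(6), or directly by the same argument with $\mm_hH$ in place of $H\mm_h$. Then (4) is immediate from (5) combined with the dimension count (\ref{size}) of Lemma \ref{kGprop}(8): each summand $(H/H\mm_h)^{\ast}$ has dimension $\dim_k\overline{H}$, and there are $|\OO_{\mm_g}|$ of them.

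For (3), I would show $\overline{H}^{\ast}\widehat{g}\subseteq\widehat{g}$ and $\widehat{g}\,\overline{H}^{\ast}\subseteq\widehat{g}$ directly from the coalgebra estimate. A functional $f\in\widehat{g}$ vanishes on $H\mm_g^{(\overline{H})}$; for $\beta\in\overline{H}^{\ast}\subseteq H^{\circ}$ one computes $(\beta f)(H\mm_g^{(\overline{H})})$ using $\Delta(H\mm_g^{(\overline{H})})\subseteq H\otimes H\mm_g^{(\overline{H})} + A^+H\otimes H$ — the second term coming from the fact that $\mm_g^{(\overline{H})}\subseteq\mm_g$ pulls back to the augmentation ideal at $1_G$ under the first tensor leg exactly as in (\ref{cop}), since $\mm_g^{(\overline{H})}$ is $\overline{H}$-stable and $N(A)$ is a Hopf ideal — and since $\beta$ kills $A^+H$, only the first term survives and it is killed by $f$. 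Hence $\beta f\in\widehat{g}$; the right module statement is symmetric. Granting (3), part (2) follows because $\widehat{g}\widehat{h}\subseteq H^{\circ}$ is a finite-dimensional subcoalgebra (product of subcoalgebras inside the algebra-coalgebra $H^\circ$), annihilated as a functional space by $H(\mm_g^{(\overline{H})})(\mm_h^{(\overline{H})})$-type ideals; tracking through (\ref{cop}) one sees any $\psi\in\widehat{g}\widehat{h}$ vanishes on $H\mm_{g'h'}^{(\overline{H})}$ for the relevant pairs, so it lies in $\bigoplus \widehat{g'h'}$ by Lemma \ref{kGprop}(2). Finally (1): $\widehat{kG_H}$ is a subcoalgebra by Lemma \ref{kGprop}(3), it is closed under $S^{\circ}$ by Lemma \ref{kGprop}(6), it contains $1=\epsilon$ (in $\widehat{1_G}=\overline{H}^{\ast}$), and closure under multiplication is exactly (2) together with the observation that $g'h'$ again runs over $G_H/\!\sim^{(\overline{H})}$; hence it is a Hopf subalgebra.

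The main obstacle I expect is part (5), specifically verifying the key ring-theoretic identity $H\bigl(\bigcap_{h\sim^{(\overline{H})}g}\mm_h\bigr) = \bigcap_{h\sim^{(\overline{H})}g} H\mm_h$ and, more delicately, $H\mm_g^{(\overline{H})}=\bigcap_{h\sim^{(\overline{H})}g}H\mm_h$ without assuming $A$ is a domain in (1)--(3). For (1)--(3) one must argue more carefully: there $\mm_g^{(\overline{H})}$ need not be a radical ideal, but $\mathbf{(OrbSemi)}$ says $A/\mm_g^{(\overline{H})}$ is semisimple, which is precisely what is needed for the coproduct/annihilator bookkeeping to close up, and the faithful flatness of $H$ over $A$ from Theorem \ref{properties}(9) lets one transport this down from $A$ to $H\mm_g^{(\overline{H})}$. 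The domain hypothesis in (4)--(5) is used only to invoke the clean dimension formula (\ref{size}), where torsion-freeness of $H$ over $A$ guarantees the rank is the constant $\dim_k\overline{H}$; the coalgebra arguments for (1)--(3) go through regardless. I would present the proof in the order (3), then (2), then (1), then (5), then (4).
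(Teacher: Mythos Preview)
Your plan for (4), (5) and the reduction of (1) to (2) matches the paper exactly. Your direct argument for (3) is in fact correct and slightly more elementary than the paper's route (the paper deduces (3) as the special case $h=1_{G_H}$ of (2), using that $\OO_{\mm_1}=\{\mm_1\}$); note your estimate $\Delta(\mm_g^{(\overline{H})})\subseteq A^+\otimes A + A\otimes\mm_g^{(\overline{H})}$ follows trivially from the counit axiom, since $\Delta(a)-1\otimes a=\sum(a_1-\epsilon(a_1))\otimes a_2\in A^+\otimes A$, so this part does not even need $\mathbf{(OrbSemi)}$.

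The genuine gap is in (2). ``Tracking through (\ref{cop})'' is not enough: (\ref{cop}) controls $\Delta(H\mm_g)$ for a single maximal ideal, whereas for (2) you must show that $\beta\gamma$ (with $\beta\in\widehat g$, $\gamma\in\widehat h$) vanishes on $\bigcap_{g',h'}H\mm_{g'h'}^{(\overline{H})}$, which needs the sharper estimate
\[
\Delta\Bigl(\bigcap_{g'\sim g,\;h'\sim h}\mm_{g'h'}\Bigr)\subseteq \mm_g^{(\overline{H})}\otimes A + A\otimes \mm_h^{(\overline{H})}=:J.
\]
The paper proves this by observing that the left side is contained in $I:=\bigcap_{g',h'}(\mm_{g'}\otimes A + A\otimes\mm_{h'})$ (this much follows from the definition of the coproduct, as in (\ref{cop})), and then showing $I=J$ by a dimension count: $\mathbf{(OrbSemi)}$ gives $\dim_k(A\otimes A)/J=|\OO_{\mm_g}|\cdot|\OO_{\mm_h}|$, and the Chinese Remainder Theorem gives the same for $(A\otimes A)/I$. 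This is precisely where $\mathbf{(OrbSemi)}$ enters for (2); without it one only gets the weaker containment $\widehat g\,\widehat h\subseteq\sum_{g',h'}(H/H\mm_{g'h'})^\ast$, which is not yet inside $\sum\widehat{g'h'}$. Your sketch mentions ``$H(\mm_g^{(\overline{H})})(\mm_h^{(\overline{H})})$-type ideals'' and that $\mathbf{(OrbSemi)}$ makes ``coproduct/annihilator bookkeeping close up'', but you have not isolated this $I=J$ step, and (3) does not help here despite your phrasing ``Granting (3), part (2) follows''.
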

\begin{proof}
Note that (3) is a special case of (2), since $\overline{H}^{\ast} = \widehat{1_{G_H}}$. By Lemma \ref{kGprop}(3),(6), (1) will follow if $\widehat{kG_H}$ is closed under multiplication. Moreover, by Lemma \ref{kGprop}(2), closure under multiplication will follow at once from (2). We claim that to prove (\ref{productghat}) amounts to showing that
\begin{equation}\label{coproductmg}
\Delta\left( \bigcap_{g'\sim^{(\overline{H})} g, h'\sim^{(\overline{H})} h} \mm_{g'h'} \right)\subseteq \mm_g^{(\overline{H})}\otimes A + A\otimes \mm_h^{(\overline{H})} =: J.
\end{equation}
For, the flatness of $H$ over $A$ together with the Chinese remainder theorem implies that $\bigcap_{g',h'} (H\mm_{g'h'}^{(\overline{H})}) = H\left(\bigcap_{g',h'} \mm_{g'h'}^{(\overline{H})}\right)$. Hence, given (\ref{coproductmg}),
\begin{equation}\label{upper} \Delta\left(\bigcap_{g'\sim^{({\overline{H}})} g, h'\sim^{({\overline{H}})} h} H\mm_{g'h'}^{(\overline{H})}\right) \subseteq H\mm_g^{(\overline{H})}\otimes H + H \otimes H\mm_h^{(\overline{H})}.
\end{equation}
Consider now $\beta\gamma $ for $\beta \in\widehat{g}$ and $\gamma \in\widehat{h}$.  It follows from (\ref{upper}) that
$$ \beta \gamma \left( \bigcap_{g'\sim^{({\overline{H}})} g, h'\sim^{({\overline{H}})} h} H\mm_{g'h'}^{(\overline{H})} \right) \subseteq \beta (H\mm_g^{(\overline{H})})\gamma (H) + \beta (H)\gamma (H\mm_h^{(\overline{H})}) = 0,$$
proving part (2) of the proposition.

Let us prove (\ref{coproductmg}). By the definition of the coproduct of $A$,
\begin{eqnarray*} \Delta\left( \bigcap_{g'\sim^{({\overline{H}})} g, h'\sim^{({\overline{H}})} h} \mm_{g'h'} \right) &\subseteq& \bigcap_{g'\sim^{({\overline{H}})} g, h'\sim^{({\overline{H}})} h} \Delta\left( \mm_{g'h'}\right)\\
 &\subseteq& \bigcap_{g'\sim^{({\overline{H}})} g, h'\sim^{({\overline{H}})} h} (\mm_{g'}\otimes A + A\otimes\mm_{h'}) =: I. 
\end{eqnarray*} 
Thus  (\ref{coproductmg}) will follow if it can be shown that $I = J$. It is clear that $J \subseteq I$. To prove equality, let $r=|\OO_{\mm_g}|$ and $s=|\OO_{\mm_h}|$. Then it follows from $\mathbf{(OrbSemi)}$ that
$$ \mathrm{dim}_k((A \otimes A)/J) = rs = \mathrm{dim}_k((A \otimes A)/I) .$$
This completes the proof of (\ref{coproductmg}).

\medskip

(4) Since $H$ is orbitally semisimple, $\left| \OO_{\mm_g} \right| = \mathrm{dim}_k (A/ \mm_g^{(\overline{H})})$ by (\ref{equisemi}), hence (4) is (\ref{hatsize}).

\medskip

(5) Since $H$ is orbitally semisimple, $H/H\mm_g^{(\overline{H})} = \bigoplus_{h \sim^{(\overline{H})} g} H/H\mm_h$, so this follows from the definition of $\widehat{g}$.
\end{proof}

\begin{Rmk} The formula (\ref{productghat}) cannot be further simplified. To see this, consider the group algebra $H := kD$ of the dihedral group as in Remark \ref{decomprmks}(1). 
Recall that $A = \; k[b^{\pm 1}] \; = \; \mathcal{O}(k^{\times}),$ so, $G_H = k^{\times}$. Now $\overline{H} = k\langle a \rangle$, so the $\overline{H}$-orbits  in $\mathrm{Maxspec}(A)$ are 
\begin{equation}\label{huh}   \{ \mathfrak{m}_g, \mathfrak{m}_{g^{-1}} \, : \, g \in k^{\times} \}, 
\end{equation}
and $\overline{H}^{\ast}  \cong  \overline{H}.$ Clearly $H$ satisfies both $\mathbf{(CoSplit)}$ with $X := (a-1)kA$, and $\mathbf{(OrbSemi)}$, since $\overline{H}$ is a group algebra. Therefore, by Propositions \ref{subgroup}(4) and \ref{kGsubalg}(5), and using (\ref{huh}),
\begin{equation}\label{sink} \widehat{g} \; = \;  (k\Pi^\circ(\chi_g) + k\Pi^\circ(\chi_{g^{-1}}))\otimes kC_2,
\end{equation}
and $\widehat{1}=1\otimes kC_2$ and $\widehat{-1}=k\Pi^\circ(\chi_{-1})\otimes kC_2$. Hence, for $g,h\in k^\times\setminus\lbrace \pm 1 \rbrace$, 
$$ \widehat{g}\widehat{h} = \left[ k\Pi^\circ(\chi_{gh}) + k\Pi^\circ(\chi_{g^{-1}h}) + k\Pi^\circ(\chi_{gh^{-1}}) + k\Pi^\circ(\chi_{g^{-1}h^{-1}}) \right]\otimes kC_2 = \widehat{gh}+\widehat{g^{-1}h}. $$ 
Thus the inclusion of (\ref{productghat}) is an equality in this instance.
\end{Rmk}

\medskip

\subsection{Decomposition of $\widehat{kG_H}$}

Before uniting the conclusions of Propositions \ref{subgroup} and \ref{kGsubalg} in Theorem \ref{decompkG} we need one more lemma. Recall from \textsection \ref{stand} and Lemma \ref{HbarHopf} the Hopf surjection $\iota^{\circ} : H^{\circ} \longrightarrow A^{\circ}$, so that $H^{\circ}$ is a right $A^{\circ}$-comodule algebra via the map $\rho : = (\id \otimes \iota^{\circ})\circ \Delta$.

\begin{Lem}\label{decompkGlem}
Let $A \subseteq H$ be an affine commutative-by-finite Hopf algebra satisfying $\mathbf{(OrbSemi)}$. Assume that $A$ is semiprime or central, or that $H$ is pointed. In the notation of \textsection\textsection \ref{stand}, \ref{orbitsec}, Definition \ref{charcompdef} and Lemma \ref{HbarHopf}, the following hold for each $g \in G_H$:
\begin{enumerate}
\item $\qquad \iota^\circ(\widehat{g}) = \bigoplus_{h\sim^{({\overline{H}})}  g} k\chi_h$.

\item $\qquad \widehat{g} \; \supseteq \; \lbrace f\in H^\circ: \rho(f) \in f\otimes \sum_{h\sim^{({\overline{H}})} g} k \chi_h \rbrace$.
\item $\qquad \widehat{1_{G_H}} \; = \; \overline{H}^{\ast}\; = \; \lbrace f\in H^\circ: \rho(f)=f\otimes 1 \rbrace$.
\end{enumerate}
\end{Lem}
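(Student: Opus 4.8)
The plan is to prove the three items by translating each statement about the functionals living in $\widehat{g}$ into a statement about the ideals $H\mm_h^{(\overline H)}$ or $\mm_h^{(\overline H)}$, using the fact that $\iota^\circ$ is just restriction from $H$ to $A$ and that $\rho = (\id\otimes\iota^\circ)\circ\Delta$ is the right $A^\circ$-comodule structure. First I would establish (1). By definition $\widehat g = (H/H\mm_g^{(\overline H)})^\ast$. Restricting a functional $f\in\widehat g$ to $A$ gives a functional vanishing on $A\cap H\mm_g^{(\overline H)}$; by faithful flatness (Theorem \ref{properties}(9)) and $\mathbf{(OrbSemi)}$ this intersection is exactly $\mm_g^{(\overline H)} = \bigcap_{h\sim^{(\overline H)} g}\mm_h$ (using (\ref{equisemi}) and Corollary \ref{orbit}(1)). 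Hence $\iota^\circ(\widehat g)\subseteq (A/\bigcap_{h\sim g}\mm_h)^\ast = \bigoplus_{h\sim g}(A/\mm_h)^\ast = \bigoplus_{h\sim g} k\chi_h$, the last identification because $A/\mm_h\cong k$. For the reverse inclusion, each $\chi_h$ with $h\sim^{(\overline H)} g$ lies in the image: indeed $\chi_h = \iota^\circ(\Pi^\circ(\chi_h))$ and $\Pi^\circ(\chi_h)\in (H/H\mm_h)^\ast\subseteq\widehat g$ by Lemma \ref{kGprop}(5) (using that $h\sim^{(\overline H)} g$ forces $H\mm_h\supseteq H\mm_h^{(\overline H)}=H\mm_g^{(\overline H)}$, so $\Pi^\circ(\chi_h)$ kills $H\mm_g^{(\overline H)}$). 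This gives (1).

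Next, (2): given $f\in H^\circ$ with $\rho(f)\in f\otimes\sum_{h\sim^{(\overline H)} g} k\chi_h$, I want to show $f$ vanishes on $H\mm_g^{(\overline H)}$. Write $\mm_g^{(\overline H)}=\bigcap_{h\sim g}\mm_h$ and note $H\mm_g^{(\overline H)} = \sum_h H\mm_g^{(\overline H)}$; it suffices to evaluate $f$ on products $h'a$ with $h'\in H$ and $a\in\mm_g^{(\overline H)}$. Using $f(h'a) = \sum f_1(h')(f_2\circ\iota)(a) = (\rho(f))(h'\otimes a)$ and the hypothesis, $f(h'a)$ is a combination of terms $f(h')\chi_{h}(a)$ with $h\sim^{(\overline H)}g$; but $\chi_h(a)=0$ since $a\in\mm_g^{(\overline H)}=\bigcap_{h'\sim g}\mm_{h'}\subseteq\mm_h$. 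Hence $f(H\mm_g^{(\overline H)})=0$, i.e. $f\in\widehat g$.

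Finally (3) is the special case $g=1_{G_H}$ of (1) and (2) combined with the identification of $\overline H^\ast$ from Lemma \ref{HbarHopf}(6): the condition $\rho(f)=f\otimes 1$ says exactly that $f\in (H^\circ)^{\co\iota^\circ}$, which by Lemma \ref{HbarHopf}(6) is $\overline H^\ast = \widehat{1_{G_H}}$ (the last equality coming from Lemma \ref{kGprop}(7) or directly from $\mm_{1_{G_H}}^{(\overline H)} = A^+\cap\cdots$, with $\mm_{1_{G_H}}$ the augmentation ideal so $H\mm_{1_{G_H}}^{(\overline H)}\subseteq A^+H$; here $\mathbf{(OrbSemi)}$ guarantees $\mm_{1_{G_H}}^{(\overline H)}$ really is the full intersection over the orbit). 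Conversely the set on the right of (3) contains $\overline H^\ast$ by (2) applied with $g=1$, and is contained in $\widehat{1_{G_H}}=\overline H^\ast$ by the same comodule computation. The main obstacle I anticipate is getting the reverse inclusion in (1) completely clean: one must be careful that $\iota^\circ(\widehat g)$ is not merely \emph{contained in} but \emph{equal to} $\bigoplus_{h\sim g}k\chi_h$, which is where $\mathbf{(OrbSemi)}$ is genuinely used (without it $A\cap H\mm_g^{(\overline H)}$ could be a proper, possibly non-radical ideal inside $\bigcap_{h\sim g}\mm_h$, and $A/\mm_g^{(\overline H)}$ need not be the semisimple algebra $k^{|\OO_{\mm_g}|}$). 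The rest is bookkeeping with the comodule identity $\rho = (\id\otimes\iota^\circ)\Delta$ and the Chinese Remainder Theorem already invoked in the proof of Proposition \ref{kGsubalg}.
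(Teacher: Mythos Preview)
Your proposal is correct and follows essentially the same approach as the paper. For (1) and (2) the arguments are nearly identical to the paper's; for (3) you take a slightly shorter route by directly invoking Lemma~\ref{HbarHopf}(6) (the identification of $(H^\circ)^{\co\iota^\circ}$ with $\overline{H}^\ast$), whereas the paper re-derives the coinvariant condition for $f\in\overline{H}^\ast$ using part~(1) and the counit axiom of comodules.

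Two minor corrections to your commentary (not to the mathematics). First, the faithful flatness step for $A\cap H\mm_g^{(\overline H)}=\mm_g^{(\overline H)}$ is unnecessary: for the forward inclusion of (1) you only need the trivial containment $\mm_g^{(\overline H)}\subseteq H\mm_g^{(\overline H)}$ to conclude that $\iota^\circ(f)$ kills $\mm_g^{(\overline H)}$. Second, your remark that $\mathbf{(OrbSemi)}$ is ``genuinely used'' for the reverse inclusion in (1) is slightly misplaced: the reverse inclusion only needs $\mm_g^{(\overline H)}\subseteq\mm_h$ for each $h\sim^{(\overline H)}g$, which is Corollary~\ref{orbit}(1). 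It is rather the \emph{forward} inclusion that requires $\mathbf{(OrbSemi)}$, in order to identify $(A/\mm_g^{(\overline H)})^\ast$ with $\bigoplus_{h\sim g}k\chi_h$ via the Chinese Remainder Theorem.
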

\begin{proof}

(1) Let $g \in G_H$ and take $f\in\widehat{g}$, so that $\left.f\right|_A(\mm_g^{(\overline{H})}) =0$. Since $A \subseteq H$ satisfies $\mathbf{(OrbSemi)}$ the Chinese remainder theorem implies that
$$ \iota^{\circ}(f) = \left.f\right|_A\in \left(A/\mm_g^{(\overline{H})}\right)^{\ast}\cong \bigoplus_{h\sim^{({\overline{H}})} g}(A/\mm_h)^{\ast} = \bigoplus_{h\sim^{({\overline{H}})} g} k\chi_h. $$ 
Conversely, let $\alpha=\sum_{h\sim^{({\overline{H}})} g} \lambda_h\chi_h \in A^{\circ}$ for some $\lambda_h\in k$. Then $\alpha=\iota^\circ(\Pi^\circ(\alpha))$ by Lemma \ref{HbarHopf}(5). Recall that $H=A\oplus X$ as right $A$-modules by (\ref{split}) in $\S$\ref{subsect1.2}. Thus 
$$ \Pi^\circ(\alpha)(H\mm_g^{(\overline{H})}) = \Pi^\circ(\alpha)(\mm_g^{(\overline{H})}\oplus X\mm_g^{(\overline{H})}))= \alpha(\mm_g^{(\overline{H})}) = \sum_{h\sim^{(\overline{H})} g} \lambda_h\chi_h(\mm_g^{(\overline{H})})=0, $$ 
as $\mm_g^{(\overline{H})}\subseteq \mm_h$ for every $h\sim^{(\overline{H})} g$. Thus $\Pi^\circ(\alpha)\in\widehat{g}$ and $\alpha\in\iota^\circ(\widehat{g})$, proving (1).

\medskip

\noindent (2) Let $f \in H^{\circ}$ and suppose $\rho(f)=f\otimes \sum_{h\sim^{(\overline{H})} g} \lambda_h \chi_h$ for some $\lambda_h\in k$. Then
\begin{eqnarray*}
f(H\mm_g^{(\overline{H})}) &=& \sum f_1(H) f_2(\mm_g^{(\overline{H})}) = \sum f_1(H) \left.f_2\right|_A (\mm_g^{(\overline{H})}) \\
&=& \rho(f)(H\otimes \mm_g^{(\overline{H})}) = f(H)\left( \sum_{h\sim^{({\overline{H}})} g} \lambda_h \chi_h(\mm_g^{(\overline{H})}) \right) =0,
\end{eqnarray*}
as $\mm_g^{(\overline{H})}\subseteq \mm_h$ for every $h\sim^{(\overline{H})} g$. Hence $f\in \widehat{g}$.

\noindent (3) That the left side contains the right is a special case of (2). For the reverse inclusion, let $f\in\overline{H}^*=\widehat{1_G}$. By part (1) 
$$\rho(f)\in (\id \otimes \iota^{\circ})(\overline{H}^* \otimes \overline{H}^*) = \overline{H}^*\otimes k \varepsilon_A, $$ 
say $\rho(f)=f'\otimes \varepsilon_A$ for some $f'\in\overline{H}^*$. By the counit axiom of right comodules, $f=\varepsilon_A(1_A)f'=f'$, hence $\rho(f)=f\otimes 1_{A^\circ}$, proving (3).
\end{proof}

We can now show that when $A \subseteq H$ satisfies $\mathbf{(OrbSemi)}$ $\widehat{kG_H}$ decomposes as a crossed or smash product whenever $H^{\circ}$ does, generalising the presence of the group algebra $kG_H$ in the dual of a commutative semiprime Hopf algebra. 

\begin{Thm}\label{decompkG}
Let $A \subseteq H$ be an affine commutative-by-finite Hopf algebra satisfying $\mathbf{(OrbSemi)}$, and assume that $A$ is semiprime or central, or that $H$ is pointed. 
\begin{enumerate}
\item Suppose that $H^{\circ}\cong  \overline{H}^{\ast}\#_\sigma A^{\circ}$ decomposes as a crossed product for some cocycle $\sigma$ and action of $A^{\circ}$ on $\overline{H}^*$. Then
\begin{equation}\label{kGcrossed} \widehat{kG_H}\; \cong \; \overline{H}^* \#_{\left.\sigma\right|_{kG_H \otimes kG_H}} kG_H 
\end{equation}
as algebras, left $\overline{H}^{\ast}$-modules and right $kG_H$-comodules.

\item Suppose $A \subseteq H$ satisfies $\mathbf{(CoSplit)}$. Then, in (\ref{kGcrossed}), $\sigma$ is trivial, so  $\widehat{kG_H}$ is a smash product of $\overline{H}^{\ast}$ by $\widetilde{G_H}$, 
\begin{equation}\label{kGsmashed} \widehat{kG_H} \; = \; \overline{H}^* \# k\widetilde{G_H},
\end{equation} 
with action given, for $g\in G_H, \varphi\in\overline{H}^*$, by
$$ \Pi^{\circ}(\chi_g)\cdot \varphi = \Pi^\circ(\chi_g)\varphi\Pi^\circ(\chi_{g^{-1}}). $$  
\end{enumerate}

\end{Thm}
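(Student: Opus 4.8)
The strategy is to run in parallel the arguments already established for $H^\circ$ and $W(H^\circ)$ (Theorems \ref{decompdual} and \ref{decompW}), but now restricted to the sub(co)algebra $\widehat{kG_H}$, using $\mathbf{(OrbSemi)}$ to identify its image under $\iota^\circ$ with the group algebra $kG_H \subseteq A^\circ$. First I would record that, under $\mathbf{(OrbSemi)}$, Proposition \ref{kGsubalg}(1) makes $\widehat{kG_H}$ a Hopf subalgebra of $H^\circ$, and Lemma \ref{decompkGlem}(1) gives $\iota^\circ(\widehat{kG_H}) = kG_H$ (summing part (1) over a transversal of $\sim^{(\overline H)}$ and invoking Lemma \ref{kGprop}(2)). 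Thus, exactly as in (\ref{comodule}) of the proof of Theorem \ref{decompW}, the right $A^\circ$-comodule structure $\rho = (\id\otimes\iota^\circ)\Delta$ on $H^\circ$ restricts to a right $kG_H$-comodule structure on $\widehat{kG_H}$, and Lemma \ref{decompkGlem}(3) identifies the coinvariants as $\widehat{kG_H}^{\,\co\,\iota^\circ} = \overline H^{\ast}$.

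For part (1), assume $H^\circ \cong \overline H^{\ast}\#_\sigma A^\circ$. The cleaving map realising this (the convolution-invertible right $A^\circ$-comodule map $\overline H^{\ast}$-splitting) is, in the proof of Theorem \ref{decompdual}(1), essentially $\Pi^\circ$ composed with the inclusion; what I need is that it carries $kG_H$ into $\widehat{kG_H}$. But this is precisely the content of Lemma \ref{kGprop}(5) / Proposition \ref{subgroup}, which shows $\Pi^\circ(\chi_g)\in (H/H\mm_g)^\ast \subseteq \widehat{g}$, so $\Pi^\circ(kG_H)\subseteq \widehat{kG_H}$. Hence the restriction $\Pi^\circ_{\mid kG_H}: kG_H \to \widehat{kG_H}$ is a convolution-invertible right $kG_H$-comodule map — a cleaving map for the extension $\overline H^{\ast}\subseteq \widehat{kG_H}$ — and by the Doi–Takeuchi theorem (Remark \ref{propremarks}(3)) we obtain $\widehat{kG_H} \cong \overline H^{\ast}\#_{\sigma'} kG_H$ as algebras, left $\overline H^{\ast}$-modules and right $kG_H$-comodules. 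That the cocycle $\sigma'$ is the restriction $\sigma|_{kG_H\otimes kG_H}$ follows by tracking through the explicit formula for the cocycle in terms of the cleaving map (\cite[Proposition 7.2.3]{Mont}): the cocycle of $\widehat{kG_H}$ is computed from the same $\Pi^\circ$, evaluated on group-likes, so it agrees with the ambient one there.

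For part (2), assuming $\mathbf{(CoSplit)}$: then by Theorem \ref{decompdual}(1) the ambient cocycle $\sigma$ is trivial, so a fortiori $\sigma|_{kG_H\otimes kG_H}$ is trivial, giving $\widehat{kG_H} = \overline H^{\ast}\#\, kG_H$; identifying $kG_H \cong k\widetilde{G_H}$ via $\Pi^\circ$ (Proposition \ref{subgroup}(2)) yields (\ref{kGsmashed}). The action formula is just the restriction to group-likes of the smash-product action recorded in Remark \ref{decomprmks}(2): for $f\in\overline H^\ast$, $\chi_g\cdot f = \sum \Pi^\circ((\chi_g)_1) f\,\Pi^\circ(S_{A^\circ}(\chi_g)_2) = \Pi^\circ(\chi_g) f \Pi^\circ(\chi_{g^{-1}})$, since $\chi_g$ is group-like with $S_{A^\circ}\chi_g = \chi_{g^{-1}}$; and this lands in $\overline H^\ast$ because $\overline H^\ast$ is normal in $H^\circ$ (Lemma \ref{HbarHopf}(2)). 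I expect the main obstacle to be the bookkeeping in part (1): verifying that the cocycle and the module/comodule isomorphism produced by Doi–Takeuchi for the smaller extension genuinely \emph{restrict} the ambient data rather than merely being abstractly isomorphic to some restriction — this requires being careful that one uses the \emph{same} cleaving map $\Pi^\circ$ throughout and that $\widehat{kG_H}$ is stable under all the relevant structure maps, which is where $\mathbf{(OrbSemi)}$ (via Proposition \ref{kGsubalg} and Lemma \ref{decompkGlem}) is doing the real work.
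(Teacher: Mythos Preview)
Your treatment of part (2) is correct and matches the paper. Part (1), however, has a real gap: you identify the cleaving map realising the assumed decomposition $H^\circ \cong \overline{H}^*\#_\sigma A^\circ$ with $\Pi^\circ$, but part (1) does \emph{not} assume $\mathbf{(CoSplit)}$. Without that hypothesis $\Pi^\circ$ is only a right $A^\circ$-comodule map (Lemma \ref{HbarHopf}(4)); it is not known to be an algebra map, and there is no reason for $\Pi^\circ(\chi_g)$ to be a unit in $H^\circ$, so $\Pi^\circ|_{kG_H}$ need not be convolution invertible. Even if it were, the cocycle it would produce is the one attached to $\Pi^\circ$, not to the abstract cleaving map witnessing the hypothesis, so the conclusion $\sigma' = \sigma|_{kG_H\otimes kG_H}$ would not follow. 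Your closing remark about ``using the same cleaving map $\Pi^\circ$ throughout'' gets the issue exactly backwards: in part (1) the ambient cleaving map is \emph{not} $\Pi^\circ$.

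The paper's argument works with the abstract cleaving map $\gamma:A^\circ\to H^\circ$ coming from the crossed product hypothesis and shows directly that $\gamma(\chi_h)\in\widehat{g}$ whenever $h\sim^{(\overline H)}g$: since $\gamma$ is a right $A^\circ$-comodule map and $\chi_h$ is grouplike, $\rho(\gamma(\chi_h)) = \gamma(\chi_h)\otimes\chi_h$, and Lemma \ref{decompkGlem}(2) then gives $\gamma(\chi_h)\in\widehat{g}$. The reverse inclusion $\widehat{g}\subseteq\bigoplus_{h\sim g}\overline{H}^*\gamma(\chi_h)$ is obtained by writing $f\in\widehat{g}$ in the crossed-product basis as $\sum f_i\gamma(u_i\#\chi_{g_i})$, computing $\rho(f)$, and using Lemma \ref{decompkGlem}(1) to force $g_i\sim^{(\overline H)}g$ and each $u_i=1$. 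Since $\gamma$ is convolution invertible on all of $A^\circ$, its restriction to $kG_H$ is automatically convolution invertible, and the cocycle one reads off via the Doi--Takeuchi formula is literally $\sigma$ restricted to grouplikes.
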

\begin{proof}
(1) Suppose that  $H^\circ\cong \overline{H}^* \#_\sigma A^\circ$. By \cite{DT} (see Remark \ref{propremarks}(4)), there exists a convolution invertible right $A^\circ$-comodule map $\gamma:A^\circ\to H^\circ$ such that the isomorphism is given by $f\# \varphi\mapsto f\gamma(\varphi)$. The core of this proof is showing that 
\begin{equation}\label{core} \widehat{g}=\bigoplus_{h\sim^{(\overline{H})} g} \overline{H}^*\gamma(\chi_h). \end{equation}

We first claim that $\gamma(\chi_h)\in\widehat{g}$. Since $\gamma:A^\circ\to H^\circ$ is a right $A^\circ$-comodule map and $\chi_h$ is a grouplike element of $A^{\circ}$, then 

$$ \rho\gamma(\chi_h) = (\gamma\otimes \id)\Delta_{A}(\chi_h) = \gamma(\chi_h)\otimes \chi_h $$
and the claim follows from Lemma \ref{decompkGlem}(2). Therefore, by Proposition \ref{kGsubalg}(3) 
$$ \overline{H}^*\gamma(\chi_h) \subseteq \overline{H}^*\widehat{g}\subseteq \widehat{g}, $$ 
proving the inclusion of the right of (\ref{core}) in the left.

For the reverse inclusion, let $f\in\widehat{g}$. By the second sentence of the proof, $f=\sum_{i=1}^n f_i\gamma(u_i\#\chi_{g_i})$, where $f_i\in\overline{H}^*, u_i\in U(\mathfrak{g}), g_i\in G_H$. Since $\rho$ is an algebra homomorphism and $\gamma:A^\circ\to H^\circ$ is a right $A^\circ$-comodule map, and since $\chi_{g_i} \in A^{\circ}$ is grouplike and $\rho(f_i)=f_i\otimes 1$ by Lemma \ref{decompkGlem}(2) we have
\begin{eqnarray*}
\rho(f) &=& \sum_{i=1}^n \rho(f_i)\rho\gamma(u_i\#\chi_{g_i}) = \sum_{i=1}^n \rho(f_i) (\gamma\otimes \id)\Delta(u_i\#\chi_{g_i}) \\
&=& \sum_{i=1}^n \sum_{(u_i)} f_i\gamma(u_{i,1}\#\chi_{g_i})\otimes (u_{i,2}\#\chi_{g_i}).
\end{eqnarray*}
But by Lemma \ref{decompkGlem}(1) $$ \rho(f)\in \widehat{g}\otimes\iota^\circ(\widehat{g})=\bigoplus_{h\sim^{(\overline{H})} g} \widehat{g}\otimes \chi_h.$$ 
Moreover, since each $u_i$ is a polynomial on primitive elements, we must have $g_i\sim^{(\overline{H})} g$ for every $i$, and each $u_i=1$. Hence $f$ has the required form, and (\ref{core}) follows.

Therefore, $$ \widehat{kG_H} = \overline{H}^*\gamma(kG_H) \cong \overline{H}^*\otimes kG_H $$ as vector spaces, left $\overline{H}^*$-modules and right $kG_H$-comodules, since by the calculations above $\rho(\widehat{kG_H})\subseteq \widehat{kG_H}\otimes kG_H$, that is the $A^\circ$-comodule structure of $H^\circ$ restricts to a $kG_H$-comodule structure for $\widehat{kG_H}$. In particular, by Lemma \ref{HbarHopf} $$ \widehat{kG_H}^{co\, kG_H} = \widehat{kG_H}\cap (H^\circ)^{co\,A^\circ} = \widehat{kG_H}\cap\overline{H}^* = \overline{H}^*. $$ Also, since $\gamma: A^\circ\to H^\circ$ is convolution invertible, then so is its restriction to the coradical $\left.\gamma\right|_{kG_H}:kG_H\to \widehat{kG_H}$, \cite[Lemma 14]{Tak}. Therefore, $$ \widehat{kG_H} \cong \overline{H}^* \#_\tau kG_H $$ and by inspecting the formulae for the cocyles $\sigma$ and $\tau$ one easily sees that $\tau=\left.\sigma\right|_{kG_H\otimes kG_H}$.

\medskip

\noindent (2) If $X$ can be chosen to be a coideal, then $H^\circ\cong \overline{H}^* \# A^\circ$ by Theorem \ref{decompdual}(1), hence the cocycle $\sigma$ is trivial and $\widehat{kG_H}$ decomposes as a smash product. The action of $kG_H$ on $\overline{H}^*$ is a special case of Remark \ref{decomprmks}(2).

\end{proof}

\medskip

\section{$H^{\circ}$ under the auspices of $\mathbf{(CoSplit)}$ and $\mathbf{(OrbSemi)}$}\label{final}

Let $A \subseteq H$ be an affine commutative-by-finite Hopf algebra. Recall that the subspaces $W(H^{\circ})$ and $\widehat{kG_H}$ of $H^\circ$ were introduced in Definition\ref{Wdef} and \ref{charcompdef}. In the presence of $\mathbf{(CoSplit)}$ and $\mathbf{(OrbSemi)}$ one has for $H^{\circ}$ the following generalisation of the description of the Hopf dual of a commutative Hopf algebra via the Cartier-Gabriel-Kostant theorem.
\begin{Thm}\label{crux}
Let $A \subseteq H$ be an affine commutative-by-finite Hopf algebra and assume that $A$ is semiprime or central, or that $H$ is pointed. Suppose that $A \subseteq H$ satisfies 
$\mathbf{(CoSplit)}$ and $\mathbf{(OrbSemi)}$.  Retain the notation of $\S$\ref{stand}, $\S$\ref{commcase} and of Definitions  \ref{Wdef} and \ref{charcompdef}.   

\begin{enumerate}
\item $H^\circ$ is a smash product
\begin{equation}\label{finalHdual}
H^\circ \;\cong \; \overline{H}^\ast \#\, A^\circ \; \cong \; (\overline{H}^\ast \# \,A')\# kG_H,
\end{equation}
where $A/N(A) = \mathcal{O}(G_H)$, where $N(A)$ is the nilradical of $A$. If $\mathrm{char}k = 0$ then $A' = U(\gg)$, where $\gg$ is the Lie algebra of $G_H$.

\item $H^\circ$ contains three Hopf subalgebras: $\overline{H}^\ast$,
\begin{equation}\label{finalWdecomp}
W(H^\circ) \; \cong \; \overline{H}^\ast \#\, A'
\end{equation}
and
\begin{equation}\label{finalkGdecomp}
\widehat{kG_H}\; \cong \; \overline{H}^\ast \#\, kG_H,
\end{equation}
with $A' \cong U(\gg)$ if $k$ has characteristic 0.
\item Suppose that $\overline{H}$ is semisimple and $k$ has characteristic 0. Then the action of $U(\gg)$ on $\overline{H}$ is inner, so that
\begin{equation}\label{coup} H^{\circ} \; \cong \; (\overline{H}^{\ast} \otimes U(\gg)) \# kG_H,
\end{equation}
a skew group algebra with coefficient ring the Hopf subalgebra $W(H^{\circ})$.
\end{enumerate}
\end{Thm}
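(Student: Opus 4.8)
The plan is to assemble Theorem \ref{crux} directly from the three pillars already established in \S\S\ref{FinDual}--\ref{kGsec}, since essentially all the real work has been done and what remains is to package it. For part (1), I would first invoke Theorem \ref{decompdual}(1): because $A \subseteq H$ satisfies $\mathbf{(CoSplit)}$ (and $A$ is semiprime or central, or $H$ is pointed), $H^\circ \cong \overline{H}^\ast \# A^\circ$ as algebras, left $\overline{H}^\ast$-modules and right $A^\circ$-comodules. Then I would feed in the Cartier--Gabriel--Kostant decomposition $A^\circ \cong A' \# kG_H$ from \eqref{comm}, noting $A/N(A) = \mathcal{O}(G_H)$ from \S\ref{stand} and $A' = U(\gg)$ when $\mathrm{char}\,k = 0$ by \eqref{Lie}. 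The only mild point to check is associativity/compatibility of the iterated smash product: that $\overline{H}^\ast \# A^\circ \cong \overline{H}^\ast \# (A' \# kG_H) \cong (\overline{H}^\ast \# A') \# kG_H$, which follows because the $A^\circ$-action on $\overline{H}^\ast$ restricts compatibly along the Hopf subalgebra $A' \subseteq A^\circ$ and along the smash-product structure, and $kG_H$ then acts on $\overline{H}^\ast \# A'$ respecting the $A'$-factor — this is the standard iterated smash product identity, so I would state it with a one-line justification rather than belabour it.

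For part (2), the three Hopf subalgebras are produced by the corresponding structural theorems. That $\overline{H}^\ast$ is a normal Hopf subalgebra of $H^\circ$ is Lemma \ref{HbarHopf}(2). The decomposition $W(H^\circ) \cong \overline{H}^\ast \# A'$ is exactly Theorem \ref{decompW}(2) together with Theorem \ref{decompW}(5), which under $\mathbf{(CoSplit)}$ makes the cocycle trivial and (in characteristic $0$) identifies $A' = U(\gg)$ with the action by the commutator-bracket derivation $f \cdot \varphi = \Pi^\circ(f)\varphi - \varphi \Pi^\circ(f)$; that $W(H^\circ)$ is a Hopf subalgebra is Lemma \ref{Wlemma}(1). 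The decomposition $\widehat{kG_H} \cong \overline{H}^\ast \# kG_H$ is Theorem \ref{decompkG}(2) (using $\mathbf{(OrbSemi)}$ for the coalgebra-to-Hopf-subalgebra upgrade in Proposition \ref{kGsubalg}(1), and $\mathbf{(CoSplit)}$ for triviality of the cocycle), with action $\Pi^\circ(\chi_g) \cdot \varphi = \Pi^\circ(\chi_g)\varphi\Pi^\circ(\chi_{g^{-1}})$. So part (2) is a three-line citation of these results.

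For part (3), when $\overline{H}$ is semisimple and $\mathrm{char}\,k = 0$, it is also cosemisimple by \cite{LR1}, so Corollary \ref{inner} applies: the $U(\gg)$-action on $\overline{H}^\ast$ is inner, hence $W(H^\circ) \cong \overline{H}^\ast \otimes U(\gg)$ as algebras. Substituting this into \eqref{finalHdual} gives $H^\circ \cong (\overline{H}^\ast \otimes U(\gg)) \# kG_H$; I should remark that this smash product is in fact a \emph{skew group algebra}, i.e.\ $G_H$ acts by algebra automorphisms of the coefficient ring $W(H^\circ)$ — this is because $kG_H$ is a group algebra and its action via the smash product $A^\circ = A' \# kG_H$ is by conjugation (Remark \ref{decomprmks}(2)), which is an algebra automorphism, and after replacing $\gg$ by $\widehat{\gg}$ as in the proof of Corollary \ref{inner} this automorphism still preserves the tensor decomposition $\overline{H}^\ast \otimes U(\widehat{\gg})$.

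I do not anticipate a genuine obstacle here — the theorem is a corollary of the machinery already built. The one place requiring a little care is the iterated-smash-product bookkeeping in part (1) and, in part (3), verifying that the $G_H$-action continues to respect the tensor-product decomposition of $W(H^\circ)$ after the $\gg \rightsquigarrow \widehat{\gg}$ adjustment; both are routine once one writes down the actions explicitly, and I would handle them with brief remarks rather than extended computation, referring to \cite[Definition 4.1.3]{Mont} for the smash product conventions throughout.
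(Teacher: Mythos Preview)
Your proposal is correct and follows essentially the same route as the paper: both treat the theorem as a packaging exercise, invoking Theorem \ref{decompdual}(1) plus the Cartier--Gabriel--Kostant decomposition $A^\circ = A' \# kG_H$ for (1), Theorems \ref{decompW}(5) and \ref{decompkG}(2) for (2), and Corollary \ref{inner} for (3). The only minor difference is that where you appeal to a ``standard iterated smash product identity'' for the rebracketing $\overline{H}^\ast \# (A' \# kG_H) \cong (\overline{H}^\ast \# A') \# kG_H$, the paper handles this concretely by noting that $\Pi^\circ$ is an algebra homomorphism (under $\mathbf{(CoSplit)}$), so the copies of $A'$ and $kG_H$ sit inside $H^\circ$ as honest subalgebras $\Pi^\circ(A')$ and $\Pi^\circ(kG_H)$, making the rebracketing an equality of subalgebras rather than an abstract associativity argument.
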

\begin{proof}(1),(2) By the isomorphism (\ref{comm}) in $\S$\ref{commcase} there is a decomposition
\begin{equation}\label{first}  A^{\circ} \; = \;  A' \# kG_H,
\end{equation}
with $A'$ and $kG_H$ Hopf subalgebras of $A^{\circ}$. Substituting (\ref{first}) into the isomorphism (\ref{smashed}) of Theorem \ref{decompdual}(1) shows that, as algebras,
\begin{equation}\label{second}  H^{\circ} \; \cong \; \overline{H}^{\ast}\# A^{\circ} \; = \; \overline{H}^{\ast} \# (A' \# kG_H).
\end{equation}
More precisely, the proof of Theorem \ref{decompdual}(1), using the fact that $\Pi^{\circ}$ is an algebra homomorphism, shows that we can rewrite (\ref{second}) as
\begin{equation}\label{third} H^{\circ} \; = \; \overline{H}^{\ast} \# \Pi^{\circ}(A' \# kG_H)\; = \; \overline{H}^{\circ} \# (\Pi^{\circ}(A') \# \Pi^{\circ}(kG_H)) \; = \; (\overline{H}^{\circ} \# \Pi^{\circ}(A')) \# \Pi^{\circ}(G_H).
\end{equation}
From (\ref{third}), by Theorem \ref{decompW}(2),(5) and their proofs, $H^{\circ}$ contains as a normal Hopf subalgebra the smash product
$$ \overline{H}^{\ast} \# \Pi^{\circ}(A') \; = \; W(H^{\circ}). $$
Moreover, by Proposition \ref{subgroup}(3), $H^{\circ} = \overline{H}^{\ast} \# \Pi^{\circ}(A' \# kG_H)$ contains the skew group algebra 
$$ \overline{H}^{\ast} \# \Pi^{\circ}(G_H) \; = \; \overline{H}^{\ast} \# \widetilde{G_H}, $$
and this equals the Hopf subalgebra $\widehat{kG_H}$ of $H^{\circ}$ thanks to the equality (\ref{kGsmashed}) of Theorem \ref{decompkG}.

(3) This is Corollary \ref{inner} combined with parts (1) and (2).
\end{proof}

\begin{Rmks}\label{cruxqns}(1) For the record, we state explicitly here the question about our standing hypotheses already mentioned in $\S$\ref{subsect1.3}.

\begin{Qtn}\label{hyps} Let $H$ be an affine commutative-by-finite Hopf algebra. Is there always a choice of normal commutative Hopf subalgebra $A$ of $H$ such that $A \subseteq H$ satisfies $\mathbf{(CoSplit)}$ and $\mathbf{(OrbSemi)}$?
\end{Qtn}

We review the status of this question for a large number of families of examples in $\S$\ref{Examples}. 

\medskip

\noindent (2) The smash product decompositions in Theorem \ref{crux} in general have nontrivial actions. For example, the actions in (\ref{finalWdecomp}) and (\ref{finalkGdecomp}) are nontrivial for $H=B(n,w,q)$ defined in \S\ref{GKdim1}, see Proposition \ref{GK1cases}(IV) and Remarks \ref{details}(2), with the references given there; and the skew group action in (\ref{finalHdual}) is in general nontrivial even when $H = A$ is commutative - for example, the algebra structure of $\mathcal{O}(G)^{\circ}$ when G is a semidirect product of $(k,+)$ by $k^{\times}$ is determined in \cite[Appendix A.4]{Cou}.
\end{Rmks}

\bigskip

\section{Examples}\label{Examples}

We list here a number of classes of examples of affine commutative-by-finite Hopf algebras and discuss in each case what our results tell us about their Hopf duals.

\subsection{Enveloping algebras of Lie algebras in positive characteristic}\label{envposchar} Assume that $k$ has positive characteristic $p$. Let $\gg$ be a $k$-Lie algebra with basis $\{x_1, \ldots , x_n\}$. Then $H := U(\gg)$ is a free module of finite rank over a central polynomial Hopf subalgebra 
$$A:=k[y_1,\ldots,y_n],$$
where each $y_i$ is a $p$-polynomial in $x_i$ and hence is primitive; see \cite[Proposition 1]{Jac}.  When $\gg$ is restricted, with restriction map $x\mapsto x^{[p]}$, one takes $y_i := x_i^p - x_i^{[p]}$ for all $i$, and in this case the Hopf quotient $\overline{H}$ is the restricted enveloping algebra of $\gg$, usually denoted by $u^{[p]}(\gg)$, with $\mathrm{dim}_k (u^{[p]}(\gg)) = p^n$. In the non-restricted case it is still the case by \cite[Proposition 2]{Jac} that $U(\gg)$ is a free $A$-module, with basis 
\begin{equation}\label{basis} \{x_1^{j_1} \cdots x_n^{j_n} : 0 \leq j_i < p^{d_i} \}, 
\end{equation}
where $y_i$ is a linear combination of $x_{i}^{p^j}$ for $j = 0, \ldots , d_i$. Thus, in the general non-restricted case,
$$ \mathrm{dim}_k(\overline{H}) \; = \; p^{\sum_i d_i}.$$

Since $A$ is central $\mathbf{(OrbSemi)}$ is trivially true for $H = U(\gg)$. Moreover it is clear from (\ref{basis}) that, as an $A$-module complement to $A$ in $H$, we can choose 
$$ X \; := \; \sum Ax_1^{j_1} \cdots x_n^{j_n}, \; 0 \leq j_i < p^{d_i}, \; \textit{ not all } j_i = 0.$$
Thus $X$ is a coideal of $H$ and so $\mathbf{(CoSplit)}$ is also valid. We can therefore apply Theorem \ref{crux}(1). Note that 
$$k[y_1, \ldots , y_n]\; \cong \; \mathcal{O}((k,+)^n),$$
and that $H^{\circ}$ is commutative since $H$ is cocommutative, so that the actions in the smash products composing $H^{\circ}$ are all trivial. We conclude that, as algebras,
$$ U(\gg)^{\circ} \; \cong \; A' \otimes \overline{H}^{\ast} \otimes k(k,+)^n, $$
with $\overline{H}^{\ast}$, $W(H^{\circ}) \, := \, A' \otimes \overline{H}^{\ast}$ and $\widehat{k(k,+)^n}\, := \, \overline{H}^{\ast} \otimes k(k,+)^n$ each being a Hopf subalgebra of $U(\gg)^{\circ}$. Here, as an algebra, $A'$ is a divided powers algebra in $n$ variables, 
$$ A' \; = \; k[f^{(m)}_1, \ldots , f^{(m)}_n : m \geq 0],$$ 
as defined and discussed at \cite[Examples 5.6.8 and 9.1.7]{Mont}. Modulo the Hopf ideal $(\overline{H}^{\ast})^+ A'$ of $W(H^{\circ})$ the coproducts of the elements of $A'$ take their classical cocommutative forms. Similarly, the elements of $(k,+)^n$ are group-like \emph{modulo}$(\overline{H}^{\ast})^+k(k,+)^n$.

\medskip

\subsection{Group algebras of finitely generated abelian-by-finite groups}\label{abfingroup} Let $G$ be a finitely generated group with an abelian normal subgroup $N$ of finite index. For convenience we may as well choose $N$ to be torsion free, and hence free abelian of rank $n$, say. With $H := kG$ and $A:= kN$, $\overline{H} = k(G/N)$. Clearly $H$ is a free $A$-module with basis a set of coset representatives of $N$ in $G$, and we can choose as a complement to $A$ in $H$ the free $A$-submodule $X$ with basis the non-trivial coset representatives. Thus $X$ is a subcoalgebra of $H$, so $\mathbf{(CoSplit)}$ holds, and $\mathbf{(OrbSemi)}$ holds since the adjoint action on $A$ is by the group $G/N$. Thus once again Theorem \ref{crux}(1) applies, and again the smash products occurring are trivial because $H^{\circ}$ is commutative. We deduce that, as algebras,
$$ kG^{\circ} \; \cong  \;  k[z_1, \ldots , z_n] \otimes \overline{H}^{\ast} \otimes k((k^{\times})^n), $$
with 
$$\overline{H}^{\ast} \cong k^{\oplus|G/N|}$$
and $\overline{H}^{\ast}$, $W(H^{\circ}) \,:= \, k[z_1, \ldots , z_n] \otimes \overline{H}^{\ast}$ and $\widehat{k((k^{\times})^n)} \, := \, \overline{H}^{\ast} \otimes k((k^{\times})^n)$ each being a Hopf subalgebra of $kG^{\circ}$. As in $\S$\ref{envposchar}, the elements $z_i$ are primitive \emph{modulo}$(\overline{H}^{\ast})^+k[z_1, \ldots , z_n]$ and the elements of $(k^{\times})^n$ are group-like \emph{modulo}$(\overline{H}^{\ast})^+k((k^{\times})^n)$.

\medskip

\subsection{Quantized coordinate rings at a root of unity}\label{quantG} In this subsection and the next we assume that $k = \mathbb{C}$ and that $\ell$ is an odd positive integer with $\ell \geq 3$, and prime to 3 if $\gg$ contains a factor of type $G_2$. The quantized coordinate ring $\OO_q(G)$ of a connected, simply connected, semisimple Lie group $G$ is a noetherian Hopf algebra, \cite[Sections 4.1 and 6.1]{ConLy}. If $q=\epsilon$ is a primitive $\ell$th root of unity, $\OO_\epsilon(G)$ is a finite module over a central Hopf subalgebra isomorphic to $\OO(G)$, \cite[Proposition 6.4]{ConLy}, \cite[Theorem III.7.2]{BrGoodbook}. Thus 
$$ A \; := \; \mathcal{O}(G) \; \subseteq \; \mathcal{O}_{\epsilon}(G) =: H $$
is an affine commutative-by-finite Hopf algebra which satisfies $\mathbf{(OrbSemi)}$ because of the centrality of $A$. In fact, the extension $\OO(G)\subseteq \OO_\epsilon(G)$ is cleft in the sense of Remark \ref{propremarks}(3), as is noted in \cite[Remark 2.18(b)]{AndruGar}), with a cleaving map which is a coalgebra map, as shown in the proof of \cite[Proposition 2.8(c)]{AndruGar}. Thus $\mathbf{(CoSplit)}$ is satisfied. The finite-dimensional Hopf quotient $\overline{H} :=\OO_\epsilon(G)/\OO(G)^+\OO_\epsilon(G)$ is the restricted quantized coordinate ring, sometimes denoted by $o_\epsilon(G)$; its vector space dimension is $\ell^{\mathrm{dim}(G)}$. Each of $u_{\epsilon}(\gg)$, (defined in $\S$\ref{quantg}), and $o_{\epsilon}(G)$ is the Hopf dual of the other by \cite[Theorem III.7.10]{BrGoodbook}. Thus Theorem \ref{decompdual}(2) applies, yielding the isomorphism of algebras, left $u_\epsilon(\gg)$-modules and right $U(\gg)\#kG$-comodules
\begin{equation}\label{function} \OO_\epsilon(G)^\circ \; \cong \; u_\epsilon(\gg)\otimes (U(\gg)\# kG),
\end{equation} 
where $\gg=\Lie G$ denotes the Lie algebra of $G$ and $u_{\epsilon}(\gg)$ is $\overline{U_{\epsilon}(\gg)}$, as defined in $\S$\ref{quantg}.  By Theorems \ref{crux}(1), \ref{decompW}(5) and \ref{decompkG}(2), $\OO_{\epsilon}(G)$ contains the Hopf subalgebras $u_\epsilon(\gg)$, 
$$W(H^{\circ}) \; = \; u_\epsilon(\gg)\otimes U(\gg)$$
and 
$$ \widehat{kG_H} \;= \; u_\epsilon(\gg) \otimes kG, $$
and the tensorand $U(\gg)\# kG$ on the right of (\ref{function}) is isomorphic to $\mathcal{O}(G)^{\circ}$ as an algebra.

\medskip

\subsection{Quantized enveloping algebras at a root of unity}\label{quantg} The quantized enveloping algebra $U_q(\gg)$ of a semisimple finite-dimensional Lie algebra $\gg$ is a noetherian Hopf algebra \cite[Section 9.1]{ConProc}, \cite[Section 9.1A]{ChPr}. When $q=\epsilon$ is a primitive $\ell$th root of unity $U_\epsilon(\gg)$ is a free module of rank $\ell^{\dim\gg}$ over a central Hopf domain $A$ \cite[Corollary and Theorem 19.1]{ConProc}, \cite[Theorem III.6.2]{BrGoodbook}. The Hopf subalgebra $A$ is the coordinate ring of  a certain solvable Poisson algebraic group $T$ with $\mathrm{dim}T = \mathrm{dim}\mathfrak{g}$; see \cite{ConProc} or \cite[III.6.5]{BrGoodbook}. The finite-dimensional Hopf quotient $\overline{H}=U_\epsilon(\gg)/A^+U_\epsilon(\gg)$ of $U_\epsilon(\gg)$ is the restricted quantized enveloping algebra, denoted by $u_\epsilon(\gg)$.

Since $A$ is central in $U_\epsilon(\gg)$ $\mathbf{(OrbSemi)}$ holds, but we do not know whether $\mathbf{(CoSplit)}$ is valid. However, for the two easiest cases - that is when $\gg$ is $\mathfrak{sl}_2$ or $\mathfrak{sl}_3$ - $\mathbf{(CoSplit)}$ has been confirmed by hand, as follows. 

For $U_\epsilon(\mathfrak{sl}_2)$ with its standard generators $\{E,F, K^{\pm 1}\}$, $A = k[E^{\ell}, F^{\ell}, K^{\pm \ell}]$ and one may check that the $A$-submodule $X$ of  $U_\epsilon(\mathfrak{sl}_2(k))$ generated by
$$ \lbrace F^rK^sE^t: 0\leq r,s,t<l \text{ and } r,t \text{ not both zero} \rbrace \cup \lbrace K^s-1 : 1\leq s<l \rbrace $$ 
is a coideal of $U_\epsilon(\sl_2(k))$. When $\gg$ is $\mathfrak{sl}_2$, the group $T$ is a semidirect product, $T = (k,+)^2\rtimes k^{\times}$; see e.g. \cite[$\S$III.6.5]{BrGoodbook}. Let $\mathfrak{t}$ denote its Lie algebra. Since $u_\epsilon(\mathfrak{sl}_2)^{\ast} = o_\epsilon(SL(2))$ by \cite[Theorem III.7.10]{BrGoodbook}, we deduce from Theorem \ref{crux} that 
$$ U_\epsilon(\sl_2)^\circ \; \cong \; (o_\epsilon(SL_2)\#U(\mathfrak{t}))\#kT ,$$ 
with Hopf subalgebras $o_\epsilon(SL_2)$, $W(U_\epsilon(\sl_2)^{\circ})\,  := \, o_\epsilon(SL_2)\#U(\mathfrak{t})$ and $\widehat{kT} \, := \, o_\epsilon(SL_2)\#kT$.

\medskip

For $ U_\epsilon(\mathfrak{sl}_3)$, the Hopf centre $A$ is $\mathcal{O}(S)$ for a Poisson algebraic group $S$ which, using \cite[III.6.5]{BrGoodbook}, is a semidirect product $(k,+)^6 \rtimes (k^{\times})^2$, whose Lie algebra $\mathfrak{s}$ has basis $\lbrace e_1,e_2,e_3,f_1,f_2,f_3,k_1,k_2 \rbrace,$ 
with non-zero brackets 
$$ [e_1,e_2]=e_3, [f_1,f_2]=-f_3, [e_1,k_1]=-e_1, [e_2,k_2]=-e_2, $$ $$ [e_3,k_i]=-e_3, [f_1,k_1]=-f_1, [f_2,k_2]=-f_2, [f_3,k_i]=-f_3. $$ 
An explicit coalgebra $A$-module complement to $A$ in $U_\epsilon(\mathfrak{sl}_3)$ has been described as a result of extensive calculations in \cite[page 131 and Appendix A.1]{Cou}, so that $A \subseteq U_\epsilon(\mathfrak{sl}_3)$ satisfies $\mathbf{(CoSplit)}$. Therefore, applying Theorem \ref{crux}, we obtain
$$ U_\epsilon(\sl_3)^\circ \; \cong\;  (o_\epsilon(SL_3)\#U(\mathfrak{s}))\# kS,$$
with Hopf subalgebras $o_\epsilon(SL_3)$,
$$ W(U_\epsilon(\mathfrak{sl}_3)^{\circ}) \; \cong  \; o_\epsilon(SL_3)\#U(\mathfrak{s})$$
and 
$$ \widehat{kS} \; \cong \; o_\epsilon(SL_3)\# kS.$$

In view of these two cases it seems reasonable to ask

\begin{question}\label{qeacosplit} Does $U_\epsilon(\gg)$ satisfy $\mathbf{(CoSplit)}$ for all semisimple Lie algebras $\gg$?
\end{question}

\medskip

\subsection{Prime regular affine Hopf algebras of Gelfand-Kirillov dimension 1}\label{GKdim1} Here, ``regular'' means ``having finite global dimension,'' necessarily then equal to 1. These Hopf $k$-algebras were classified when $k$ is an algebraically closed field of characteristic 0 by Wu, Liu and Ding in \cite{Wu}, building on \cite{BrZh} and \cite{LWZ}; see also the survey article \cite{BrZh2}. By a fundamental result of Small, Stafford and Warfield \cite{SSW} a semiprime affine algebra of GK-dimension one is a finite module over its centre. But in fact more is true for these Hopf algebras - they are all commutative-by-finite. This can be checked  on a case-by-case basis, as we now briefly outline. For $k$ algebraically closed of characteristic 0 there are 2 finite families and 3 infinite families, as follows.

\begin{enumerate}
\item[(I)] The commutative algebras $k[x]$ and $k[x^{\pm 1}]$.
\item[(II)] The unique noncommutative cocommutative example, the group algebra $H = kD$ of the infinite dihedral group $D$, defined and discussed in Remarks \ref{decomprmks}(1). 
\item[(III)] The infinite dimensional Taft algebras 
$$T(n,t,q)\; :=\; k\langle g,x: g^n=1, xg=qgx \rangle,$$
 where $n \in \mathbb{Z}_{\geq 2},$ $1 \leq t \leq n$ and $q$ is a primitive $n$th root of 1 in $k$, with $g$ group-like and $\Delta (x) = x \otimes1 + g^t \otimes x$. With $n' :=n/\gcd(n,t)$, $A :=k[x^{n'}]$ is a commutative normal Hopf subalgebra.
\item[(IV)] The generalised Liu algebras $B(n,w,q)$, where $n$ and $w$ are positive integers and $q$ is a primitive $n$th root of 1. Here,
$$ B(n,w,q) \; :=\; k\langle x^{\pm 1}, g^{\pm 1}, y\,:\, x \textit{ central},\, yg=qgy, \, g^n=x^w=1-y^n \rangle, $$
with $x$ and $g$ group-like and $\Delta(y) = y \otimes 1 + g\otimes y$. One can show that $A:=k[x^{\pm 1}]$ is a central Hopf subalgebra over which $B(n,w,q)$ is free of rank $n^2$.
\item[(V)] Let $m$ and $d$ be positive integers with $(1+m)d$ even, and let $q$ be a primitive $2m^{\mathrm{th}}$ root of 1 in $k$. The Hopf algebras $D(m,d,q)$ are defined in \cite[Section 4.1]{Wu}. $D(m,d,q)$ is finitely generated over the normal commutative Hopf subalgebra $A:=k[x^{\pm 1}]$, \cite[(4.7)]{Wu}.
\end{enumerate} 

The above algebras are all free over the listed normal commutative Hopf subalgebras. Families (I)-(IV) are pointed and decompose as crossed products 
$$ H\; \cong \; A\#_\sigma \overline{H};$$
 but $D(m,d,q)$ is not pointed, \cite[Proposition 4.9]{Wu}. Applying Theorems \ref{decompdual}, \ref{decompW} and \ref{decompkG} to these families yields the following information about their Hopf duals, (omitting the algebras in (I) and (II), which are already covered by $\S\S$\ref{commcase} and \ref{abfingroup}). In the proposition, for coprime integers $s$ and $m$ with $1 \leq s < m$ and a primitive $m^{\mathrm{th}}$ root of unity $q \in k$, $T_f(m,s,q)$ denotes the finite dimensional Taft algebra in these parameters; that is, $T_f(m,s,q)  := T(m,s,q)/\langle x^m \rangle$. For a positive integer $m$, $C_m$ denotes the cyclic group of order $m$.

For reasons of space we omit some details in the proofs for (III) and (IV), and omit the entire proof of (V). Full proofs can be found in \cite{Jahn} and \cite{Cou}; detailed references are listed in Remarks \ref{details}.

\begin{proposition}\label{GK1cases} Let $k$ be an algebraically closed field of characteristic 0, and let $H$ be an affine prime regular Hopf $k$-algebra of Gelfand-Kirillov dimension one, as listed in cases (III), (IV) and (V) above. Then $\mathbf{CoSplit)}$ and $\mathbf{OrbSemi)}$ hold in all cases, and the finite dual of $H$ takes the following forms.
\begin{enumerate}[label=(\roman*)]
\item[(III)] Let $d:=\mathrm{gcd}(n,t),\, n':=n/d,\, t':=t/d$. Then, as algebras,
$$T(n,t,q)^\circ \; \cong \; kC_d \otimes T_f(n',t',q^d)\otimes  k[f]\otimes k(k,+), $$
with Hopf subalgebras $kC_d$, $\overline{H}^{\ast} = kC_d \otimes T_f(n',t',q^d)$,
$$ W(H^{\circ}) \; = \; kC_d \otimes T_f(n',t',q^d)\otimes  k[f]$$
and
$$ \widehat{kG_H} \; = \;  kC_d \otimes T_f(n',t',q^d)\otimes  k(k,+).$$

\item[(IV)] When $H = B(n,w,q)$,
$$H^\circ \; = \; T_f(n,1,q) \# (k[f]\otimes k(k^\times)),$$

with Hopf subalgebras $\overline{H}^{\ast} =T_f(n,1,q)$, $W(H^{\circ}) = T_f(n,1,q) \# k[f]$ and 
$$\widehat{kG_H} \; = \; T_f(n,1,q) \# k(k,+).$$

\item[(V)] When $H = D(m,d,q)$,
$$ H^\circ \; = \; (kC_2\#_\sigma T_f(m,1,q^2)) \# (k[f]\otimes k(k^\times)),$$
with Hopf subalgebras $\overline{H}^{\ast} = kC_2 \#_{\sigma} T_f(m,1,q^2)$, $W(H^{\circ}) = \overline{H}^{\ast}\#k[f]$ and 
$$ \widehat{kG_H} \; = \; \overline{H}^{\ast} \# k(k^{\times}).$$ 

\end{enumerate}
\end{proposition}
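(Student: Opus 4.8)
The plan is to treat the three families (III), (IV), (V) in turn, in each case first verifying the two standing hypotheses $\mathbf{(CoSplit)}$ and $\mathbf{(OrbSemi)}$, then identifying the three ingredients $A^{\circ}$, $\overline{H}^{\ast}$ and the relevant smash-product actions, and finally reading off the structure of $H^{\circ}$ from Theorems \ref{crux}, \ref{decompW}(5) and \ref{decompkG}(2). The first ingredient is immediate: in family (III) one checks (via the quantum binomial theorem) that $x^{n'}$ is primitive, so $A = k[x^{n'}] \cong \mathcal{O}(\mathbb{G}_a)$, while in families (IV) and (V) $A = k[x^{\pm 1}] \cong \mathcal{O}(\mathbb{G}_m)$; hence by $\S$\ref{commcase}, $A^{\circ} \cong k[f] \otimes k(k,+)$ in case (III) and $A^{\circ} \cong k[f] \otimes k(k^{\times})$ in cases (IV), (V), with $f$ a basis of the one-dimensional Lie algebra $\mathfrak{g}$ of $G_H$ and $A' = U(\mathfrak{g}) = k[f]$.

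For $\mathbf{(CoSplit)}$: in families (III) and (IV) the Hopf algebra $H$ is pointed with coradical contained in $AG(H)$, so by Remark \ref{propremarks}(2) it decomposes as a crossed product $H = A \#_{\sigma} \overline{H}$; one checks that the cleaving map $\gamma \colon \overline{H} \to H$ may be chosen to be a coalgebra map (sending the canonical generators of $\overline{H}$ to the corresponding generators $g,x$, resp.\ $g,y$, of $H$), so that Theorem \ref{decompdual}(2) applies and gives $\mathbf{(CoSplit)}$. In family (V), $D(m,d,q)$ is not pointed and this argument is unavailable; instead one produces by hand an explicit coideal $A$-module complement to $A$, as carried out in \cite{Cou}. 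For $\mathbf{(OrbSemi)}$: in family (IV) $A$ is central, so it holds trivially; in families (III) and (V) one computes that the left adjoint action of $\overline{H}$ on $A$ factors through a finite cyclic group — in case (III), $x$ acts as $0$ and $g$ rescales $x^{n'}$ by $q^{-n'}$, an element of order $d$ — whence $\mathbf{(OrbSemi)}$ follows from Theorem \ref{orbhold}(1).

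It remains to determine $\overline{H}^{\ast}$ and the smash-product actions. In family (III), $\overline{H} = H/A^{+}H = k\langle g, x \mid g^{n} = 1,\ x^{n'} = 0,\ xg = qgx\rangle$ is a generalised Taft algebra of dimension $nn' = d(n')^{2}$; dualising it directly (as done in \cite{Jahn}) identifies $\overline{H}^{\ast} \cong kC_{d} \otimes T_f(n', t', q^{d})$, and a further direct computation shows that the action of $A' = k[f]$ on $\overline{H}^{\ast}$ and the action of $kG_H = k(k,+)$ on $W(H^{\circ}) = \overline{H}^{\ast}\# A'$ are both trivial; hence all the smash products collapse to tensor products and Theorem \ref{crux} yields the stated decomposition of $T(n,t,q)^{\circ}$. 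In family (IV), $\overline{H} = B(n,w,q)/A^{+}H \cong T_f(n,1,q)$, whose dual is again a finite Taft algebra, $\overline{H}^{\ast} \cong T_f(n,1,q)$; here the actions of $k[f]$ and $k(k^{\times})$ on $\overline{H}^{\ast}$ are genuinely non-trivial and are computed explicitly via Remark \ref{decomprmks}(2) (see \cite{Jahn}), after which Theorems \ref{crux}, \ref{decompW}(5) and \ref{decompkG}(2) give $H^{\circ}$ and its Hopf subalgebras $W(H^{\circ})$ and $\widehat{kG_H}$ in the stated forms. Family (V) follows the same pattern, the computations of $\overline{H}^{\ast} \cong kC_{2}\#_{\sigma}T_f(m,1,q^{2})$ and of the actions being carried out in \cite{Cou}.

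The hard part is this last step: explicitly computing the finite dual $\overline{H}^{\ast}$ of the finite dimensional Hopf quotient $\overline{H}$ — in case (V), where $\overline{H}$ is not pointed, this is the most delicate point — and determining the three smash-product actions, which are trivial in family (III) but genuinely non-trivial in families (IV) and (V). A secondary obstacle, again specific to family (V), is constructing the coideal complement witnessing $\mathbf{(CoSplit)}$ in the absence of pointedness.
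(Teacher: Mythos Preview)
Your overall strategy matches the paper's, but there is a genuine inconsistency in your treatment of family (IV). You assert that for $B(n,w,q)$ the cleaving map $\gamma\colon\overline{H}\to H$, $\bar g^i\bar y^j\mapsto g^iy^j$, can be chosen to be a coalgebra map, and hence that Theorem \ref{decompdual}(2) applies. But Theorem \ref{decompdual}(2) forces the action of $A^{\circ}$ on $\overline{H}^{\ast}$ to be \emph{trivial}, directly contradicting your later (correct) claim that the smash-product actions in $B(n,w,q)^{\circ}$ are non-trivial. In fact the obvious candidate $\gamma$ is \emph{not} a coalgebra map: in $\overline{H}$ one has $\bar g^{\,n}=1$, whereas in $H$ one has $g^n=x^w\neq 1$, so when the coproduct of $\bar g^i\bar y^j$ produces a tensor factor $\bar g^{\,i+l}$ with $i+l\geq n$, applying $\gamma$ after reducing the exponent mod $n$ gives $g^{i+l-n}$ rather than $g^{i+l}$. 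Since the action is known to be non-trivial, no coalgebra cleaving map can exist for (IV).

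The paper handles (IV) differently: it exhibits an explicit coideal $A$-complement $X$ (the $A$-span of $\{g^iy^j:0\leq i<n,\,1\leq j<n\}\cup\{g^i-1:1\leq i<n\}$) and then invokes Theorem \ref{decompdual}(1), which gives a genuine smash product rather than a tensor product. Your argument for (III) is fine precisely because there $g^n=1$ already holds in $H$, so the obstruction above vanishes and $\gamma$ \emph{is} a coalgebra map; this also makes your ``further direct computation'' of triviality redundant, since Theorem \ref{decompdual}(2) delivers it automatically. Fix (IV) by dropping the coalgebra-cleaving claim and instead verifying $\mathbf{(CoSplit)}$ via an explicit coideal complement, then applying Theorem \ref{decompdual}(1).
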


\begin{proof}(III) Let $A = k[x^{n'}]$ as noted above. Thus 
$$\overline{H} \; := \; H/ x^{n'}H \; = \; k\langle \overline{x}, \overline{g}\, : \, \overline{g}^n = 1,\, \overline{x}^{n'} = 0, \, \overline{x}\overline{g} = q\overline{g}\overline{x} \rangle.$$
Thus, as a right $A$-module $H = A \oplus X$, where $X$ is the free right $A$-module on basis $\mathcal{P}$, where
$$\mathcal{P} \; := \; \lbrace g^ix^j : 0\leq i<n, 1\leq j<n' \rbrace \cup \lbrace g^i-1 : 1\leq i<n \rbrace.$$
One checks that $X$ is a coideal of $H$, so that $H$ satisfies $\mathbf{(CoSplit)}$. Moreover, it is clear that, for $ 0\leq i<n,\, 0 \leq j<n'$, the map 
$$ \gamma: \overline{H}\longrightarrow H: \overline{g}^i\overline{x}^j \mapsto g^i x^j $$
is a cleaving map of coalgebras, so that Theorem \ref{decompdual}(2) applies, yielding the isomorphism of algebras
\begin{equation}\label{one} H^{\circ} \; \cong \; \overline{H}^{\ast} \otimes A^{\circ}.
\end{equation}
It is now straightforward to calculate that, as an algebra, 
$$ \overline{H} \; \cong \;  T_f(n',t',q^d) \#_{\tau} kC_d, $$
where $\tau$ is a cocycle, while as coalgebra
$$ \overline{H} \; \cong \;  T_f(n',t',q^d) \otimes kC_d.$$
Since $\mathrm{gcd}(n',t') = 1$, $T_f(n',t',q^d)$ is self-dual by \cite[Exercise 7.4.3]{Radbook}, as also is $kC_d$, so we deduce that, as an algebra,
\begin{equation}\label{two} \overline{H}^{\ast} \; \cong \; kC_d \otimes T_f(n',t',q^d).
\end{equation}
Combining (\ref{one}), (\ref{two}) and the fact that $A^{\circ} = k[f] \otimes k(k,+)$, we deduce the algebra isomorphism in (III), with $\overline{H}^{\ast}$ a Hopf subalgebra by Lemma \ref{HbarHopf}(1).  Moreover $A \subseteq H$ also satisfies $\mathbf{(OrbSemi)}$ since the action of $\overline{H}$ on $A$ factors through the group algebra $k\langle\overline{g}\rangle$. Thus the claims regarding $W(H^{\circ})$ and $\widehat{kG_H}$ follow respectively from Theorems \ref{decompW}(5) and \ref{decompkG}(2).

\medskip
(IV) Let $H = B(n,w,q)$, so $H = \bigoplus_{0 \leq i,j < n}y^ig^jA$ is a free $A$-module, and
\begin{equation}\label{three} \overline{H} \; = \; H/A^+H \; \cong \; T_f(n,1,q).
\end{equation}
As a complement $X$ to $A$ in $H$ choose the $A$-module generated by 
$$ \lbrace g^iy^j : 0\leq i<n, 1\leq j<n \rbrace \cup \lbrace g^i-1: 1\leq i<n \rbrace.$$
One checks routinely that $X$ is a coideal of $H$. Therefore $\mathbf{(CoSplit)}$ holds, and Theorem \ref{decompdual}(1) implies that 
$$ H^{\circ} \; \cong \; \overline{H}^{\ast} \# A^{\circ} \; \cong \; T_f(n,1,q) \# (k[f] \otimes k(k^{\times})),$$
by  (\ref{three}) and the fact that $T_f(n,1,q)$ is self-dual by \cite[Exercise 7.3.4]{Radbook}.

Since $A$ is central $A \subseteq H$ satisfies $\mathbf{(OrbSemi)}$, so Theorem \ref{crux}(2) confirms the claims regarding $W(H^{\circ})$ and $\widehat{kG_H}$.

\medskip

(V) The fact that $D(m,d,q)$ satisfies $\mathbf{(CoSplit)}$ and $\mathbf{(OrbSemi)}$ is proved in \cite[$\S$2.2.5, pages 52-55 and Proposition 3.1.13]{Cou}, and the structure of its Hopf dual is described in \cite[Corollary 4.4.6(V) and Appendix A.2]{Cou}.
\end{proof}

\begin{Rmks}\label{details}(1) Precise formulae for the comultiplication for the duals of the Taft algebras (III) have been obtained - for full details see \cite[Corollary 4.4.6 and Appendix A.2]{Cou}.

(2) In (IV), the smash products in both $W(H^{\circ})$ and $\widehat{k(k^{\times})}$ are non-trivial, hence so is the smash product in $H^{\circ}$ itself. Precise formulae are given in \cite[Corollary 4.4.6(4) and Appendix A.2]{Cou}. Formulae for the coproduct are given at the same reference - in particular, neither $k[f]$ nor $k(k^{\times})$ is a Hopf subalgebra of $H^{\circ}$.

(3) For completeness, we note some gaps in the above analysis. For the duals of the Taft algebras (III), we only have precise formulae for the coproduct when $\mathrm{gcd}(n,t) = 1$; see \cite[Remark 6.12]{Jahn} for this case. For the duals $D(m,d,q)^{\circ}$ in (V) we do not have formulae for the actions of $k[f]$ and $k(k^{\times})$ on $\overline{H}^{\ast}$, nor do we know whether the cocycle $\sigma$ is non-trivial.  
\end{Rmks}

\medskip

\subsection{Noetherian PI Hopf domains of Gelfand-Kirillov dimension two}\label{GKdim2} Let $k$ be algebraically closed of characteristic 0 and let $H$ be a noetherian Hopf $k$-algebra domain with $\GKdim (H) = 2$. Such Hopf algebras were classified in \cite{GoodZh} under the additional assumption that $H$ has an infinite dimensional commutative factor, or equivalently by \cite[Proposition 3.8(c)]{GoodZh} that $ \Ext_H^1({_Hk},{_Hk})\neq 0.$

The algebras in this classification which satisfy a polynomial identity (PI) are all easily seen to be commutative-by-finite, as we now indicate. Omitting the 2 group algebras and the rank 2 polynomial algebra, since these have already been discussed, the remaining PI families are as follows, using the numbering scheme of \cite{GoodZh}:
\begin{enumerate}
\item[(III)] Hopf algebras $A(\ell, n,q)$, where $\ell$ and $n$ are integers with $\ell >1$ and $n > 0$, and $q$ is a primitive $\ell^{\mathrm{th}}$ root of 1 in $k$. As an algebra  
$$ A(\ell, n , q) \; := \; k\langle x^{\pm 1},y: xy = qyx \rangle, $$
the localised quantum plane,  with $x$ group-like and $\Delta (y) = y \otimes 1 + x^n \otimes y$. These algebras are free of finite rank over the normal commutative Hopf subalgebra $A:=k [x^{\pm \ell}, y^{\ell'} ]$, where $\ell' := \ell/ \gcd (n, \ell)$.

\item[(IV)] Hopf algebras $B(n, p_0, \dots , p_s,q)$, where $s \geq 2$, $n,p_0,\ldots , p_s$ are positive integers with $p_0 \mid n$ and $\{p_i : i \geq 1 \}$ strictly increasing and pairwise relatively prime, and $q$ is a primitive $\ell^{\mathrm{th}}$ root of 1 where $\ell := (n/p_0)p_1 \ldots p_s$. Then $B(n, p_0, \dots , p_s,q)$ is the subalgebra of the localised quantum plane from (III) generated by $\{x^{\pm 1}, y^{m_i} : 1 \leq i \leq s \}$, where $m_i := \Pi_{j \neq i} p_j.$ This is a Hopf subalgebra of $A(\ell,n,q)$, with $x$ group-like and $\Delta(y^{m_i}) = y\otimes 1 + x^{m_i n} \otimes y$. One can easily check that $A := k[ y^{p_1 \cdots p_s}, x^{\pm \ell} ]$ is a normal commutative Hopf subalgebra over which $B(n,p_0,\ldots , p_s,q)$ is a finite module.
\end{enumerate}

We proceed to describe the Hopf duals of these algebras. Once again we leave some calculations to the reader in order to save space; details of the omitted arguments can be found in \cite{Jahn} or \cite{Cou}.

\begin{proposition}\label{GK2duals} Let $k$ be an algebraically closed field of characteristic 0, and let $H$ be a noetherian Hopf $k$-algebra domain of GK-dimension 2 with $\mathrm{Ext}_H(k,k) \neq 0$, as listed above. Then $H$ satisfies $\mathbf{(CoSplit)}$ and $\mathbf{(OrbSemi)}$, and so the Hopf dual $H^{\circ}$ of $H$ has the following form.
\begin{enumerate}
\item[(III)] When $H = A(\ell,n,q)$, let $d:=\mathrm{gcd}(n,\ell), \, \ell':=\ell/d, \, n':=n/d.$ Then
$$ H^{\circ} \; \cong \; (kC_d \otimes T_f(\ell',n',q^{-d})) \# (k[f,f']\otimes k((k,+)\times k^\times)),$$
with Hopf subalgebras $\overline{H} \cong kC_d \otimes T_f(\ell',n',q^{-d})$,
$$ W(H^{\circ}) \; \cong \; (kC_d \otimes T_f(\ell',n',q^{-d})) \# k[f,f']$$
and
$$ \widehat{kG_H} \; \cong \; (kC_d \otimes T_f(\ell',n',q^{-d})) \# k((k,+)\times k^{\times}).$$
\item[(IV)] When $H = B(n, p_0, \dots , p_s,q)$, define $\xi_i := q^{-(n/p_0)m_i}$, for $i = 1, \ldots , s$, so $\xi$ is a primitive $p_i^{\mathrm{th}}$ root of unity. Then 
$$ H^{\circ} \; \cong \; (kC_{n/p_0}\otimes T_f(p_1,p_0p_2\ldots p_s, \xi_1) \otimes \ldots \otimes T_f(p_s,p_0,\xi_s)) \#(k[f,f']\otimes k((k,+)\times k^\times)),$$
with Hopf subalgebras 
$$\overline{H}^{\ast} \; \cong \; kC_{n/p_0}\otimes T_f(p_1,p_0p_2\ldots p_s, \xi_1) \otimes \ldots \otimes T_f(p_s,p_0,\xi_s),$$
$$ W(H^{\circ}) \; \cong \;   (kC_{n/p_0}\otimes T_f(p_1,p_0p_2\ldots p_s, \xi_1) \otimes \ldots \otimes T_f(p_s,p_0,\xi_s)) \# k[f,f']$$
and 
$$ \widehat{kG_H} \; \cong \; (kC_{n/p_0}\otimes T_f(p_1,p_0p_2\ldots p_s, \xi_1) \otimes \ldots \otimes T_f(p_s,p_0,\xi_s)) \# k((k,+)\times k^\times). $$
\end{enumerate}
\end{proposition}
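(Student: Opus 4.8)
The plan is to treat families (III) and (IV) uniformly, following the template already used for Proposition \ref{GK1cases}: first verify that $A\subseteq H$ satisfies both $\mathbf{(CoSplit)}$ and $\mathbf{(OrbSemi)}$, then invoke Theorem \ref{crux} to obtain the smash-product decompositions $H^{\circ}\cong\overline{H}^{\ast}\#A^{\circ}\cong(\overline{H}^{\ast}\#A')\#kG_H$, and finally identify the two commutative ingredients separately: $A^{\circ}$ by the Cartier-Gabriel-Kostant description recalled in $\S$\ref{commcase}, and $\overline{H}^{\ast}$ by writing the finite-dimensional Hopf algebra $\overline{H}=H/A^{+}H$ as a (cocycle-twisted) product of finite-dimensional Taft algebras and a cyclic group algebra and then using self-duality of Taft algebras. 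Once $H^{\circ}$ is so described, the Hopf subalgebras $W(H^{\circ})$ and $\widehat{kG_H}$ are read off from Theorems \ref{decompW}(5) and \ref{decompkG}(2). The bulkier bookkeeping (explicit bases, cocycles) is relegated to \cite{Jahn}, \cite{Cou} as announced in the statement.

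For $\mathbf{(CoSplit)}$: in both families $H$ is free over $A$ on an explicit PBW-type basis of monomials in the skew-primitive generator(s) $y$ (resp. the $y^{m_i}$) and the group-like generators. Mimicking the proofs of Proposition \ref{GK1cases}(III)--(IV), I would take $X$ to be the $A$-submodule spanned by those basis monomials of positive degree in the skew-primitive generator(s), together with the elements $g^{i}-1$ where $g$ runs over the group-like generators and $i$ over their nonzero residues. Using $\Delta(g)=g\otimes g$ and $\Delta(y)=y\otimes 1+c\otimes y$ for the appropriate group-like $c$, one checks directly that $\varepsilon(X)=0$ and $\Delta(X)\subseteq X\otimes H+H\otimes X$, so $X$ is a coideal complement to $A$ and $\mathbf{(CoSplit)}$ holds. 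For $\mathbf{(OrbSemi)}$ I would compute the left adjoint action of $\overline{H}$ on $A$: each group-like generator acts on $\maxspec(A)$ through a finite cyclic group, while each skew-primitive generator $y$ acts trivially on $A$, since $(\mathrm{ad}_{\ell}y)(a)=ya-c\,a\,c^{-1}y$ vanishes on the chosen generators of $A$ because the exponents defining $A$ were chosen precisely so that the relevant powers of $q$ equal $1$. Hence the $\overline{H}$-action on $\maxspec(A)$ factors through a group and Theorem \ref{orbhold}(1) gives $\mathbf{(OrbSemi)}$.

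Granting the two hypotheses, Theorem \ref{crux}(1),(2) produces the smash-product decompositions, so it remains only to identify the factors. Since $\charr k=0$, $A$ is reduced, so $A=\mathcal{O}(G_H)$; computing $\Delta$ on the (few) algebra generators of $A$ pins down the two-dimensional solvable group $G_H=\maxspec(A)$ and its Lie algebra $\gg$, and Cartier-Gabriel-Kostant then identifies $A^{\circ}=A'\#kG_H$ with $A'=U(\gg)$, in the form displayed in the statement. To identify $\overline{H}^{\ast}$ one writes $\overline{H}$ explicitly: in case (III) it is generated by the image of $x$ (a group-like with $\bar x^{\ell}=1$) and of $y$ (nilpotent, skew-primitive), and an elementary computation presents it as a crossed product $T_f(\ell',n',q^{-d})\#_{\tau}kC_d$ as an algebra and as the tensor product $T_f(\ell',n',q^{-d})\otimes kC_d$ as a coalgebra; dualizing and applying the self-duality of Taft algebras with coprime parameters, \cite[Exercise 7.4.3]{Radbook}, together with self-duality of $kC_d$, yields $\overline{H}^{\ast}\cong kC_d\otimes T_f(\ell',n',q^{-d})$. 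In case (IV) the pairwise coprimality of $p_1,\dots,p_s$ lets the Chinese Remainder Theorem split $\overline{H}$, up to a cocycle twist, as $kC_{n/p_0}\otimes\bigotimes_{i=1}^{s}T_f(p_i,\ast,\xi_i)$ with $\xi_i=q^{-(n/p_0)m_i}$, and self-duality of each Taft factor yields the stated $\overline{H}^{\ast}$. Substituting these identifications into the decompositions of Theorem \ref{crux}, and reading off $W(H^{\circ})$ and $\widehat{kG_H}$ from Theorems \ref{decompW}(5) and \ref{decompkG}(2), completes the proof.

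The main obstacle is the precise identification of $\overline{H}^{\ast}$: one must determine exactly which Taft parameters and roots of unity occur in the (co)smash decomposition of $\overline{H}$ — in particular the inversion $q\mapsto q^{-d}$, and the scalars $\xi_i$ in case (IV), which encode the antipode/self-duality isomorphism of the finite Taft algebras — and one must keep track of the cocycle $\tau$; this is why the decomposition of $\overline{H}$ is best described separately at the algebra and the coalgebra level before dualizing. The $\mathbf{(CoSplit)}$ verification via an explicit coideal complement is routine but combinatorially delicate, and is the other point where some care (rather than a new idea) is required.
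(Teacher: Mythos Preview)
Your proposal is correct and follows essentially the same approach as the paper's proof: verify $\mathbf{(CoSplit)}$ via an explicit coideal $A$-complement built from PBW monomials of positive skew-primitive degree together with the $x^j-1$, verify $\mathbf{(OrbSemi)}$ by showing the adjoint $\overline{H}$-action on $A$ factors through the cyclic group generated by $\bar{x}$, identify $\overline{H}^{\ast}$ by presenting $\overline{H}$ as a cocycle crossed product at the algebra level and a tensor product at the coalgebra level before applying Taft self-duality, identify $A^{\circ}$ via Cartier--Gabriel--Kostant, and then invoke Theorems \ref{decompdual}(1), \ref{decompW}(5), \ref{decompkG}(2) and \ref{crux}. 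The paper carries out precisely these steps with the same choices of $X$ and the same references (including \cite[Exercise 7.4.3]{Radbook} for self-duality and \cite{Cou} for the detailed $\overline{H}$-structure in case (IV)).
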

\begin{proof}(III) As noted above, $A:=k [x^{\pm \ell}, y^{\ell'} ]$ is a normal commutative Hopf subalgebra of $H$. It is clear that $H$ is a free right $A$-module on the basis $\{y^i x^j : 0 \leq i \leq \ell', \, 0 \leq j < \ell \}$. By a straightforward analysis detailed in \cite[Example 1.1.21 and $\S$2.2.6]{Cou}, as algebras
$$ \overline{H} := H/A^+ H \; \cong \; T_f(\ell', n', q^{-d}) \#_{\sigma} kC_d ,$$
for a cocycle $\sigma$ which is in general non-trivial; and, as coalgebras,
\begin{equation}\label{cotense}  \overline{H}  \; \cong \; T_f(\ell', n', q^{-d}) \otimes  kC_d .
\end{equation}
Since $\mathrm{gcd}(n',\ell') = 1$, both tensorands in (\ref{cotense}) are self-dual (using \cite[Exercise 7.3.4]{Radbook} for the left-hand one), and hence
\begin{equation}\label{uno} \overline{H}^{\ast} \; \cong \;  kC_d \otimes T_f(\ell', n', q^{-d})  
\end{equation}
as algebras. Moreover $A \cong \mathcal{O}((k,+)\times k^{\times})$, so that, as Hopf algebras,
\begin{equation}\label{duo} A^{\circ} \; \cong \; k[f,f'] \otimes k((k,+) \times k^{\times}), 
\end{equation}
as recalled in $\S$\ref{commcase}.

Let $X$ be the right $A$-submodule of $H$ generated by $ \lbrace y^ix^j : 1\leq i<\ell', 0\leq j<\ell \rbrace \cup \lbrace x^j-1: 1\leq j<\ell \rbrace$. Clearly $H = A \oplus X$, and it is routine to check that $X$ is a coideal of $H$. Therefore $A \subseteq H$ satisfies $\mathbf{(CoSplit)}$. 

Thus Theorem \ref{decompdual}(1) applies, and with (\ref{uno})and (\ref{duo}) it yields the isomorphism of algebras 
\begin{eqnarray*} H^{\circ} \; &\cong& \; \overline{H}^{\ast}\# A^{\circ}\\
&\cong& \;  (kC_d \otimes T_f(\ell', n', q^{-d})) \# (k[f,f'] \otimes k((k,+)\times k^{\times})),
\end{eqnarray*}
with Hopf subalgebra $ (kC_d \otimes T_f(\ell', n', q^{-d}))$.

It is routine to check that the adjoint action of $\overline{H}$ on $A$ factors through the group algebra $kC_{\ell} = k\langle \overline{x} \rangle$, so that $A \subseteq H$ satisfies 
$\mathbf{(OrbSemi)}$. Therefore Theorem \ref{crux}(2) yields the remaining parts of (III).

(IV) With $A = k[y^{p_1 \cdots p_s}, x^{\pm \ell}]$, a commutative normal Hopf subalgebra of $H$, one can check that $H$ is a free right $A$-module on the basis $\{ y_1^{i_1}\ldots y_s^{i_s}x^j : 0\leq j<\ell, 0\leq i_t<p_t \} $. The structure of the quotient Hopf algebra $\overline{H} := H/A^+ H$ has been analysed in \cite[Lemma 2.2.5]{Cou}. As an algebra it is an iterated crossed product whose detailed description we do not need here; but as a coalgebra it decomposes as
$$ \overline{H} \; \cong \; T_f(p_1,p_0, \xi_1) \otimes T_f(p_2, p_0p_1, \xi_2)\otimes \cdots \otimes T_f(p_s,p_0p_1 \cdots p_{s-1}, \xi_s) \otimes kC_{n/p_0}.$$
Since the above tensorands  are all self-dual because the $p_i$ are mutually coprime \cite[Exercise 7.4.3]{Radbook}, we deduce that, as an algebra,
\begin{equation}\label{hip} \overline{H}^{\ast} \; \cong \; T_f(p_1,p_0, \xi_1) \otimes T_f(p_2, p_0p_1, \xi_2)\otimes \cdots \otimes T_f(p_s,p_0p_1 \cdots p_{s-1}, \xi_s) \otimes kC_{n/p_0}.
\end{equation} 

Just as in (III), 
\begin{equation}\label{hap} A^{\circ} \; \cong \; k[f,f'] \otimes k((k,+) \times k^{\times}).
\end{equation}
Define $X$ to be the right $A$-module generated by 
$$ \mathcal{B} \; := \; \lbrace y_1^{i_1}\ldots y_s^{i_s}x^j : 0\leq j<\ell, 0\leq i_t<p_t, \text{ some } i_t\geq 1 \rbrace \cup \lbrace x^j-1: 1\leq j<\ell \rbrace.$$
Then $X$ is a free $A$-module on the basis $\mathcal{B}$ with $H = A \oplus X$. It is routine to confirm that $X$ is a coideal of $H$, so that $A \subseteq H$ satisfies $\mathbf{(CoSplit)}$.  Therefore Theorem \ref{decompdual}(1) applies, implying that 
\begin{equation}\label{her} H^{\circ} \; \cong \; \overline{H}^{\ast} \#A^{\circ}.
\end{equation}
Combining (\ref{hip}), (\ref{hap}) and (\ref{her}) gives the first two isomorphisms in (IV). 

Finally, note that the adjoint action of each of the elements $y_i$ on $A$ is  trivial. Therefore the action of $\overline{H}$ on $A$ factors through the group algebra $kC_{\ell} = k\langle \overline{x}\rangle$. Hence $A \subseteq H$ satisfies $\mathbf{(OrbSemi)}$, and so the final two isomorphisms in (IV) follow from Theorem \ref{crux}(2).
\end{proof}

\begin{Rmk}\label{further} For the algebras in Proposition \ref{GK2duals}, further information on the definition of the generators of $H^{\circ}$ is obtained in \cite[Corollary 4.4.8 and Appendix A.3]{Cou}.
\end{Rmk}

\end{document}